\documentclass[11pt,twoside,a4paper,reqno]{amsart}
\usepackage[utf8]{inputenc}
\usepackage{amsmath}
\usepackage{amssymb}
\usepackage{amsthm}
\usepackage[english]{babel}
\usepackage{tikz-cd}
\usepackage{url}
\usepackage{parskip}
\usepackage{enumerate}
\usepackage[normalem]{ulem}
\usepackage{hyperref}
\newtheorem{theorem}{Theorem}

\newtheorem{definition}[theorem]{Definition}
\newtheorem{example}[theorem]{Example}
\newtheorem{lemma}[theorem]{Lemma}
\newtheorem{remark}[theorem]{Remark}
\newtheorem{proposition}[theorem]{Proposition}

\newcommand{\as}[1]{\text{as } #1 \rightarrow\infty}
\renewcommand{\d}{\, \text{d}}

\newcommand{\f}{\varphi}
\newcommand{\fceil}[2]{\left \lceil \frac{#1}{#2}  \right \rceil}

\newcommand{\R}{\mathbb{R}}

\newcommand{\C}{\mathcal{C}}

\newcommand{\N}{\mathbb{N}}
\renewcommand{\P}{\mathbb{P}}
\newcommand{\E}{\mathbb{E}}
\newcommand{\B}{\mathcal{B}}
\newcommand{\D}{\mathcal{D}}
\renewcommand{\S}{\mathcal{S}}
\newcommand{\G}{\mathcal{G}}
\newcommand{\vol}{\mathrm{vol}}
\newcommand{\xRightarrow}[1]{\overset{#1}{\Rightarrow}}
\newcommand{\one}{\mathbf{1}}
\renewcommand{\to}{\rightarrow}

\renewcommand{\binom}[2]{{{#1} \choose {#2}}}

\DeclareMathOperator*{\esssup}{ess\,sup}
\DeclareMathOperator*{\essinf}{ess\,inf}
\usepackage{makecell}
\usepackage[margin=1in]{geometry}
\usepackage{todonotes}
\usepackage{soul}

\begin{document}
\title[Quenched limit theorems for random dynamical systems]{Quenched limit theorems via mixing of all orders for random dynamical systems}

\author{M. Auer}
\address{Department of Mathematics \\ University of Queensland, Brisbane, Australia}
\email{\href{mailto:m.auer@uq.edu.au}{m.auer@uq.edu.au}}
\thanks{The author would like to thank Cecilia Gonz\'{a}lez-Tokman for helpful discussions and the University of Queensland for their hospitality during this work. This work was supported by the Australian Research Council (FT240100592).}
\keywords{Central limit theorem, Poisson limit theorem, random dynamical systems, mixing of all orders, quenched limit theorems.}
\subjclass{Primary: 37H12, 37A25, 60F05 Secondary: 37A05, 37A50}
\date{}
\maketitle
\begingroup
\leftskip4em
\rightskip\leftskip
\begin{small}
\paragraph*{Abstract}
We adapt the notion of mixing of all orders to random dynamical systems and use it to establish quenched limit theorems. Assuming quenched exponential mixing of all orders, we prove both the central limit theorem and the Poisson limit theorem. Compared to existing approaches, this framework offers two main advantages. First, it allows for substantially weaker assumptions on the random constants appearing in mixing estimates: in the exponential mixing regime, logarithmic integrability suffices for the central limit theorem, while in the Poisson case, no integrability condition is required. Second, it applies naturally to invertible systems, where standard methods based on $L^{\infty}$-type estimates or spectral decay are less well suited.
\end{small}
\par
\endgroup

\part{Results}

\section{Introduction}

Deterministic dynamical systems with sufficient chaotic properties often exhibit statistical behaviour analogous to that of random processes. In particular, under suitable mixing assumptions, one can establish probabilistic limit theorems such as the central limit theorem (CLT) or the Poisson limit theorem (PLT) for observables along orbits.

A natural extension of this framework is given by random dynamical systems, where the evolution is driven by a sequence of maps chosen according to an underlying stochastic process. Such models arise, for instance, when deterministic dynamics are subject to small random perturbations at each iteration. These random systems form the main object of study of this manuscript.

In the deterministic setting, the CLT for dynamical systems has been extensively studied. For systems exhibiting sufficiently strong mixing properties, Birkhoff sums of sufficiently regular observables satisfy a central limit theorem. More precisely, if $(T, M,\mu)$ is a measure-preserving dynamical system and $f$ is a sufficiently regular centred observable, then the partial sums $\sum_{n=0}^{N-1} f \circ T^n$, suitably normalised, converge in distribution to a Gaussian random variable. We refer to \cite{Gouezel04, melbournenicol05, DeCLT} and references therein for representative results.

A related class of results concerns limit laws for rare events, where suitably rescaled hitting or return times to shrinking targets converge to exponential statistics. This limiting behaviour is commonly referred to as the Poisson limit theorem (PLT). Equivalently, one may formulate it in terms of the convergence of the counting process of visits to shrinking targets to a Poisson process. In deterministic dynamical systems, such results are typically obtained under strong mixing assumptions together with an aperiodicity condition preventing periodic clustering. We refer to \cite{HSV99,HV09,FFT12} for foundational references.

Proofs of the CLT and PLT typically rely on a form of \emph{quantitative mixing}. More precisely, for some decay rate $\psi(n)\searrow 0$, a Banach space $\mathfrak{B}$ of sufficiently regular observables, and all $f,g\in \mathfrak{B}$, one assumes that
\begin{equation*}
\left|\int_M f\cdot g\circ T^n \d\mu - \int_M f \d\mu \int_M g \d\mu \right|
\leq \psi(n)\,\|f\|_{\mathfrak{B}} \|g\|_{\mathfrak{B}} \quad \forall n\geq 1.
\end{equation*}

In many situations, such estimates can be extended to higher-order correlations under additional structure: for all $m\geq 2$, all functions $f_0,\dots,f_{m-1}\in \mathfrak{B}$, and all integers $0=k_0\leq \cdots \leq k_{m-1}$, one has
\begin{equation*}
\left|
\int_M \prod_{j=0}^{m-1} f_j\circ T^{k_j}\d\mu
- \prod_{j=0}^{m-1} \int_M f_j\d\mu
\right|
\leq \psi\left(\min_{0\leq j\leq m-2}(k_{j+1}-k_j)\right)
\prod_{j=0}^{m-1}\|f_j\|_{\mathfrak{B}}.
\end{equation*}
We refer to this property as \emph{mixing of all orders}. It provides a convenient framework to establish limit theorems such as the CLT or PLT via moment (or cumulant) methods. While the combinatorial bookkeeping can be substantial, the underlying strategy is conceptually transparent. In \cite{bjorklundgordnik17}, a CLT is proved under exponential mixing of all orders, and in \cite{dfl22}, a PLT is obtained under similar assumptions.

Our goal is to formulate an appropriate notion of mixing of all orders for random dynamical systems $(T_{\omega})$, and to use it to establish both the CLT and the PLT in this setting. To the best of our knowledge, there is currently no widely accepted formulation of mixing of all orders for random dynamical systems. The present work proposes a natural definition adapted to this setting.

In the setting of random dynamical systems, limit theorems can be formulated in different ways depending on how the randomness is treated. In the \emph{annealed} (or averaged) setting, one studies the distribution of observables after averaging over both the phase space and the driving randomness. In contrast, in the \emph{quenched} setting, one fixes a typical realisation of the randomness and considers limit theorems with respect to the corresponding sample measures.

Quenched and annealed formulations are related but not directly comparable in general; in many contexts, the quenched version is considered more informative, as it captures individual realisations. The distinction is discussed, for instance, in \cite{ANV15, DDWequiv} in the context of mixing and the CLT.

In this paper, we focus exclusively on quenched limit theorems, where randomness is treated as a fixed environment and probabilistic statements hold for $\mathbb{P}$-almost every realisation.

The quenched CLT for random dynamical systems has been extensively studied in a variety of settings. Early results go back to Kifer \cite{Ki98}, who established the quenched CLT under a random version of $\phi$-mixing. Since then, stronger results, often in the form of almost sure invariance principles, have been obtained under similarly strong mixing assumptions; see, for instance, \cite{S23, ntv18}.

Such results are typically derived via spectral methods for random transfer operators. When sufficient decay properties are available, alternative approaches can be employed, including random analogues of the Guivarc'h--Nagaev method \cite{GHclt} and Gou\"{e}zel's approach \cite{Gasip}. These techniques have been successfully implemented in a range of works, including \cite{ALS09, DFGTV18a, DFGTV18b, DFGTV20,DH20, DH21, DHS23,Haf24,dolgopyat2024ratesconvergencecltasip, DRAGI_EVI__2026}.

The quenched PLT for random dynamical systems is less well developed than its deterministic counterpart, although a number of results are available in the literature. Existing works establish the quenched PLT under strong mixing assumptions; see, for instance, \cite{RSV14, AFV15,FFV17, FFMV20,CS22,AHV25}. An alternative approach, based on spectral methods, is used in \cite{AFGTV25}, where a spectral gap for the random transfer operator is assumed.

Compared to the existing literature, the use of mixing of all orders in the present random setting offers two advantages, which we now describe. We use the notation introduced in Section~\ref{setsec}.

\textbf{(A) Random constants governing mixing.}
A central difficulty in establishing statistical limit theorems such as the quenched CLT or quenched PLT under mixing or spectral assumptions is that the relevant rates typically depend on the realisation. This dependence is commonly expressed through a random multiplicative constant, such as the quantity $C(\omega)$ in \eqref{linfmixdefintro}.

In several works, this constant is assumed to be uniform in $\omega$; see, for example, \cite{ALS09,ntv18, AFGTV25}. Such an assumption, however, is rather restrictive.

Various approaches have been developed to relax this requirement. One direction is to assume integrability of $C(\omega)$ (or higher moments), as in \cite{CT26,Haf24,RSV14,CS22}. Alternative methods avoid such integrability assumptions by modifying the underlying framework, for instance by inducing on subsets of $\Omega$ where $C(\omega)$ is uniformly bounded \cite{Ki98, DH21}, or by using adapted norms \cite{DHS23}. In \cite{S23}, random Young towers are employed; in their setting, $C(\omega)$ is only known to have polynomial tails (see \cite{zhikun2015mixing}). Such approaches, however, typically require additional structural conditions that can be difficult to verify in concrete examples.

The framework based on mixing of all orders allows for a substantial weakening of these assumptions without requiring additional structural input. In particular, we show that under exponential mixing of all orders,
\begin{itemize}
\item logarithmic integrability of $C(\omega)$ suffices for the quenched CLT;
\item no integrability assumption on $C(\omega)$ is required for the quenched PLT.
\end{itemize}
These conclusions follow from Theorems \ref{cltthm} and \ref{pltthm}, together with the relation \eqref{n*crel} linking our formulation to the standard mixing estimates involving the random multiplicative constant $C(\omega)$.

\textbf{(B) Invertible systems.}
In several works establishing quenched CLT or PLT results directly via mixing assumptions, one typically employs asymmetric estimates of the form
\begin{equation}\label{linfmixdefintro}
\left|\int_M f \cdot g\circ T_{\omega}^n \d\mu_{\omega}- \int_M f \,\mathrm{d}\mu_{\omega} \int_M g \d\mu_{\sigma^n \omega} \right| \leq C(\omega) \psi(n) \|f\|_{\mathfrak{B}} \|g\|_{L^{\infty}},
\end{equation}
for all $f$ in a Banach space $\mathfrak{B}$, $g\in L^{\infty}$, and $n\geq 1$, where $C:\Omega\to(0,\infty)$ is measurable. Alternatively, stronger notions of mixing, such as $\alpha$-mixing and related notions \cite{bradmix}, can be imposed, as is e.g.\ the case in \cite{Ki98}. These assumptions are closely tied to approaches based on Stein's method, martingale approximations, or conditioning arguments on the fibre measures.

Such estimates are, however, generally incompatible with invertible systems. Indeed, if \eqref{linfmixdefintro} were to hold in the invertible setting, one could precompose $g$ with inverse iterates and obtain stronger bounds. In invertible systems, one instead expects decay of correlations only when both observables belong to suitable regularity classes (e.g.\ H\"{o}lder, $BV$, or Sobolev spaces).

Using spectral properties of the random transfer operator, several authors \cite{ALS09, DFGTV20,DH20} have treated invertible systems. In this setting, however, good spectral properties are typically available only on anisotropic Banach spaces, such as those introduced by Gou\"{e}zel and Liverani \cite{GL06}. Since their construction depends on stable and unstable directions, in the random setting, the corresponding spaces would naturally depend on $\omega$. Constructing such $\omega$-dependent Banach spaces with suitable spectral properties is, however, highly non-trivial, and existing works often circumvent this issue by imposing strong uniformity assumptions.

To summarise, standard approaches either rely on mixing properties that are incompatible with invertibility, or on spectral properties of transfer operators, which require delicate Banach space constructions. While the latter can accommodate invertible systems (e.g.\ via anisotropic spaces), this typically involves substantial technical effort and strong uniformity assumptions in the random setting.

By contrast, the framework of mixing of all orders avoids the use of Banach space techniques and spectral theory, and instead works directly at the level of mixing properties. At the same time, it avoids asymmetric mixing assumptions such as \eqref{linfmixdefintro} that would not be compatible with invertibility. This makes it well-suited to the study of invertible systems, in line with what is observed in the deterministic theory.

\textbf{Future directions.}
While the approach of mixing of all orders provides several advantages, it also involves certain trade-offs. For instance, in our setting, we require a stretched exponential rate of mixing, i.e.\ $\psi(n)=e^{-\gamma n^{\theta}}$ for some constants $\gamma,\theta>0$, whereas some existing techniques apply under slower rates. For example, in \cite{Ki98} the quenched CLT is obtained under polynomial decay, and in \cite{RSV14} superpolynomial decay suffices to establish the quenched PLT.

Some of the arguments developed here are not intrinsically tied to mixing of all orders and may be applicable in other settings. For instance, the ideas surrounding Lemma~\ref{badseqclem1}, where one excludes a sparse sequence in the proof of the PLT, allow for improved conditions on $C(\omega)$. Exploring such extensions appears to be a promising direction for future work.

The remainder of the paper is organised as follows. Section~\ref{setsec} introduces the notation, and Section~\ref{mixsec} defines quenched mixing of all orders. The quenched CLT and PLT are stated in Sections~\ref{cltsec} and~\ref{pltsec}, with proofs given in Sections~\ref{cltproofsec} and~\ref{pltproofsec}. Examples are presented in Section~\ref{explesec} and proved in Section~\ref{expleproofsec}. Finally, Section~\ref{condsec} establishes quenched mixing of all orders under condition~\eqref{linfmixdefintro} or under a spectral assumption on the random transfer operator.

\section{Setting and notation}\label{setsec}

Let $(\Omega, \P)$ be a probability space and let $\sigma: \Omega\rightarrow \Omega$ be an invertible map that preserves $\P$, and is ergodic. For $\kappa>0$ let $M$ be a $C^{\kappa}$ smooth manifold of dimension $\dim(M)=d$, and, for $\omega\in \Omega$, let $T_{\omega}:M\rightarrow M$ be transformations. We will consider $(T_{\omega})$ as a random dynamical system, with $\sigma$ being the driving system. 

Throughout, we will assume that the skew product $\S(x,\omega)=(T_{\omega}(x), \sigma \omega)$ preserves a probability measure $\nu$ that projects onto $\P$ in the second coordinate, and denote is decomposition by $\nu=\int \mu_{\omega} \d\P(\omega)$. Denote the stationary measure by
\begin{equation*}
\mu(A) = \int_{\Omega} \mu_{\omega}(A) \d\P \quad \forall A \subset M \text{ measurable}.
\end{equation*}

We will denote iterates along a certain trajectory of $\sigma$ by
\begin{equation*}
T_{\omega}^n := T_{\omega} \circ T_{\sigma \omega} \circ \cdots \circ T_{\sigma^{n-1} \omega} \quad n\geq 1, \omega \in \Omega,
\end{equation*}
and, for a measurable set $A\subset M$, its inverse images are defined by
\begin{equation}\label{inversedef}
T_{\sigma^n \omega}^{-n}(A) := (T_{\omega}^n)^{-1}(A)  \quad n\geq 1, \omega \in \Omega.
\end{equation}
If each $T_{\omega}$ is invertible, then $T_{\sigma^n \omega}^{-n}:M\rightarrow M$ is a well defined function, and it holds
\begin{equation*}
T_{\sigma^n \omega}^{-n} = (T_{\sigma^{n-1} \omega})^{-1} \circ (T_{\sigma^{n-2} \omega})^{-1} \circ \cdots \circ (T_{\omega})^{-1}  \quad n\geq 1, \omega \in \Omega.
\end{equation*}
In \eqref{inversedef}, we choose to use the subscript $\sigma^n \omega$ to remember that $\S^{-n}$ carries the fibre $M\times \{\sigma^n \omega\}$ to the fibre $M\times \{\omega\}$. For a function $f:M\rightarrow \R$ denote the ergodic sums by 
\begin{equation*}
S_{N,\omega}(f)=\sum_{n=1}^{N} f \circ T_{\omega}^n.
\end{equation*} 

For $\P$-a.e.\ $\omega$, the fibre measures are interrelated by the identity
\begin{equation}\label{fibrerel}
\mu_{\omega}(T_{\sigma\omega}^{-1} (A)) = \mu_{\sigma \omega}(A) \quad \forall A \subset M \text{ measurable}.
\end{equation}
Indeed, for a measurable set $B\subset\Omega$, it holds that $\S^{-1}(A\times B) = \bigcup_{\omega\in \sigma^{-1} (B)} T_{\sigma\omega}^{-1}(A) \times \{\omega\}$ and therefore
\begin{align*}
\int_{\sigma^{-1} (B)} \mu_{\sigma\omega}(A) \d\P &=\int_{B} \mu_{\omega}(A) \d (\P\circ \sigma^{-1}) \\
&=\int_{B} \mu_{\omega}(A) \d\P\\
&= \nu(A\times B)\\
&= \nu(\S^{-1}(A\times B))\\
& = \int_{\sigma^{-1} (B)} \mu_{\omega}(T_{\sigma\omega}^{-1}(A)) \d\P.
\end{align*}
If $\sigma$ is invertible, then $\sigma^{-1}(B)$ can be replaced by any measurable set $C\subset \Omega$, and the relation \eqref{fibrerel} follows.

\begin{remark}
Without invertibility of $\sigma$, the above argument yields only equality of the conditional expectations with respect to $\sigma^{-1}\mathcal{F}$, where $\mathcal{F}$ is the $\sigma$-algebra for $\Omega$, and not pointwise equality of the fibre measures.

However, if $\sigma$ is not invertible, we can always replace it with its invertible extension.
\end{remark}

Throughout this paper, we adhere to the following notational conventions:
\begin{itemize}
\item[$\bullet$] For real-valued functions $\alpha$ and $\beta$ defined on $\mathbb{N}$ or $\mathbb{R}$, we write $\beta = O(\alpha)$ if there exists a constant $C>0$ such that $|\beta(x)| \leq C |\alpha(x)|$ for all $x$, and $\beta = o(\alpha)$ if $|\beta(x)|/|\alpha(x)| \to 0$ as $x \to \infty$. If $\lim_{x\to\infty} \alpha(x)/\beta(x) = 1$, we write $\alpha \sim \beta$.

\item[$\bullet$] For $[0,\infty)$-valued functions $\alpha$ and $\beta$, we often write $\beta \ll \alpha$ instead of $\beta = O(\alpha)$, and $\beta \asymp \alpha$ if both $\beta \ll \alpha$ and $\alpha \ll \beta$.

\item[$\bullet$] For $x^* \in M$ and $r > 0$, let $B_r(x^*)$ denote the ball of radius $r$ centred at $x^*$. For simplicity, set $B_r(x^*) = \emptyset$ for $r \leq 0$. The notation $\mathbf{1}_A$ denotes the indicator function of a measurable set $A$.

\item[$\bullet$] For a function $\phi\in L^1(\P)$ we will denote its expectation by $\E(\phi):=\int_{\Omega} \phi \d\P$.

\item[$\bullet$] For a sequence of measurable functions $f_n:M\rightarrow\R$, a sequence of measures $\eta_n$, and a random variable $X$, we will write 
\begin{equation*}
f_n \xRightarrow{\eta_n} X \quad \as{n}
\end{equation*}
to denote convergence in distribution with respect to the measures $\eta_n$, or more explicitly
\begin{equation*}
\lim_{n\rightarrow\infty} \eta_n(f_n\leq t) = \P(X\leq t)
\end{equation*}
at all $t\in \R$ for which $t\rightarrow \P(X\leq t)$ is continuous.
\end{itemize}

Unless stated otherwise, all functions and sets that appear in the following are assumed to be measurable.

\section{Mixing or all orders}\label{mixsec}

Here we will discuss quenched mixing of all orders, and motivate the main Definition \ref{qmemdef}. For simplicity of the presentation below, we will confine ourselves now to \textit{quantitative mixing} only; it seems similar arguments can be made when the mixing is only qualitative.

Let $\mathfrak{B} \subset L^{\infty}(M)$ be a Banach space, and let $\psi(n)\searrow 0$. We say that $(T_{\omega})$ is \textit{quenched mixing in $\mathfrak{B}$ of speed $\psi$} if there is a function $C:\Omega \rightarrow(0,\infty)$ such that, for all $f,g\in \mathfrak{B}$ and $n\geq 1$, it holds that
\begin{equation}\label{mixingdef}
\left|\int_M f \cdot g\circ T_{\omega}^n \d\mu_{\omega}- \int_M f \d\mu_{\omega} \int_M g \d\mu_{\sigma^n \omega} \right| \leq C(\omega) \psi(n) \|f\|_{\mathfrak{B}} \|g\|_{\mathfrak{B}} \quad \text{for $\P$-a.e.\ $\omega$}.
\end{equation}
For some systems, we can even choose $C(\omega)=\text{constant}$, for example, this can happen when $(T_{\omega})$ consists of small perturbations of a hyperbolic map. In this case, we will say that the mixing is \textit{uniform}. Since having a uniform speed of mixing is quite a restrictive property, we will try to avoid it in the following.

As mentioned in the introduction, it is often helpful to not just be mixing but \textit{mixing of all orders}. Taking inspiration from deterministic dynamical systems, we might want to simply define it as follows; $(T_{\omega})$ is \textit{quenched mixing of order $m$ in $\mathfrak{B}$ of speed $\psi$} if there is a function $C:\Omega \rightarrow(0,\infty)$ such that, for $\P$-a.e.\ $\omega$, for all $f_0, \dots, f_{m-1} \in \mathfrak{B}$, and integers $0 = k_0 \leq \dots \leq k_{m-1}$, it holds that
\begin{equation}\label{qmemwrongdef}
\left| \int_M \prod_{j=0}^{m-1} f_j \circ T_{\omega}^{k_j} \d\mu_{\omega} - \prod_{j=0}^{m-1} \int_M f_j \d\mu_{\sigma^{k_j}\omega} \right| \leq C(\omega) \psi(\min_{0\leq j\leq m-2} (k_{j+1}-k_j)) \prod_{j=0}^{m-1} \|f_j\|_{\mathfrak{B}}.
\end{equation}

However, there is a problem with \eqref{qmemwrongdef}. Suppose it holds with $m=3$, and let $f,g\in \mathfrak{B}$, $n\geq 1$ and $l\geq n$. Using invariance, and then applying the above estimate with $f_0=1$, $f_1=f$, $f_2=g$, $k_1=l$, $k_2=l+n$, and $\omega'=\sigma^{-l}\omega$ yields
\begin{equation}\label{unifspeed}
\begin{aligned}
&\left|\int_M f \cdot g\circ T_{\sigma^l \omega'}^n \d\mu_{\sigma^l \omega'}- \int_M f \d\mu_{\sigma^l \omega'} \int_M g \d\mu_{\sigma^{n+l} \omega'} \right| \\
& = \left|\int_M f\circ T_{\omega'}^l \cdot g\circ T_{\omega'}^{n+l} \d\mu_{\omega'}- \int_M f \d\mu_{\sigma^l \omega'} \int_M g \d\mu_{\sigma^{n+l} \omega'} \right|\\
& \leq C(\omega') \psi(\min(n,l)) \|f\|_{\mathfrak{B}} \|g\|_{\mathfrak{B}}\\
& \leq C(\omega') \psi(n) \|f\|_{\mathfrak{B}} \|g\|_{\mathfrak{B}},
\end{aligned}
\end{equation}
for $\P$-a.e.\ $\omega'$. Now, for $\P$-a.e.\ $\omega$, we can apply the above for any $l\geq n$, therefore
\begin{equation*}
\left|\int_M f \cdot g\circ T_{\omega}^n \d\mu_{\omega}- \int_M f \d\mu_{\omega} \int_M g \d\mu_{\sigma^n \omega} \right| \leq \inf_{l\geq n} C(\sigma^{-l}\omega) \psi(n) \|f\|_{\mathfrak{B}} \|g\|_{\mathfrak{B}}.
\end{equation*}
Denote $\tilde{C}_n(\omega)=\inf_{l\geq n} C(\sigma^{-l}\omega)$. Then $\tilde{C}_n(\sigma\omega)\leq \tilde{C}_n(\omega)$, hence by ergodicity $\tilde{C}_n$ is almost surely constant and equal to $\essinf_{\omega' \in \Omega} C(\omega')$. In particular, it is independent of $n$. Therefore
\begin{equation*}
\left|\int_M f \cdot g\circ T_{\omega}^n \d\mu_{\omega}- \int_M f \d\mu_{\omega} \int_M g \d\mu_{\sigma^n \omega} \right| \leq \essinf_{\omega' \in \Omega} C(\omega') \psi(n) \|f\|_{\mathfrak{B}} \|g\|_{\mathfrak{B}} \quad \text{for $\P$-a.e.\ $\omega$}.
\end{equation*}
And we have deduced that the speed of mixing is in fact uniform, which is exactly what we were trying to avoid.

Therefore, we will try to formulate a weaker definition that
\begin{itemize}
\item is still strong enough, so it can be used to show CLT, PLT and other statistical limit theorems,
\item but does not imply uniform speed of mixing.
\end{itemize}

For inspiration, we consider the following. Often, when the system $(T_{\omega})$ is sufficiently expanding (see e.g.\ \cite{ANV15, CT26,ntv18}), then it holds that, in fact one of the norms in the mixing estimate \eqref{mixingdef} is the $L^{\infty}$-norm, and it holds
\begin{equation}\label{linfmix}
\left|\int_M f \cdot g\circ T_{\omega}^n \d\mu_{\omega}- \int_M f \d\mu_{\omega} \int_M g \d\mu_{\sigma^n \omega} \right| \leq C(\omega) \psi(n) \|f\|_{\mathfrak{B}} \|g\|_{L^{\infty}} \quad \text{for $\P$-a.e.\ $\omega$}.
\end{equation}
Now, for $f_0, \dots, f_{m-1} \in \mathfrak{B}$ integers $0 = k_0 \leq \dots \leq k_{m-1}$, we can apply \eqref{linfmix} with 
\begin{equation*}
f=f_0, g= \prod_{j=1}^{m-1} f_j \circ T_{\omega}^{k_j-k_1} \quad \text{and} \quad n=k_1
\end{equation*}
to obtain
\begin{equation}\label{linfiter}
\left| \int_M \prod_{j=0}^{m-1} f_j \circ T_{\omega}^{k_j} \d\mu_{\omega} - \int_M f_0 \d\mu_{\omega} \int_M \prod_{j=1}^{m-1} f_j \d\mu_{\sigma^{k_1}\omega} \right| \leq C(\omega) \psi(k_1) \|f_0\|_{\mathfrak{B}} \prod_{j=1}^{m-1} \|f_j\|_{L^{\infty}}.
\end{equation}
Assuming $\|\cdot\|_{\mathfrak{B}} \geq \|\cdot \|_{L^{\infty}}$, we can iterate this procedure (see Section \ref{condsec}) and obtain
\begin{equation}\label{linfmem}
\left| \int_M \prod_{j=0}^{m-1} f_j \circ T_{\omega}^{k_j} \d\mu_{\omega} - \prod_{j=0}^{m-1} \int_M f_j \d\mu_{\sigma^{k_j}\omega} \right| \leq \left(\sum_{j=0}^{m-1} C(\sigma^{k_j}\omega) \psi(k_{j+1}-k_j)\right) \prod_{j=0}^{m-1} \|f_j\|_{\mathfrak{B}}.
\end{equation}

To make \eqref{linfmem} more aesthetically pleasing, define 
\begin{equation*}
N^*_m(\omega') = (\sqrt{\psi})^{-1}\left(\frac{1}{m C(\omega')} \right),
\end{equation*}
where $(\sqrt{\psi})^{-1}(t)=\inf(s \geq 0 \;|\; \sqrt{\psi(s)}\leq t\})$. Then, if $\psi$ is continuous, it holds that
\begin{equation*}
\left| \int_M \prod_{j=0}^{m-1} f_j \circ T_{\omega}^{k_j} \d\mu_{\omega} - \prod_{j=0}^{m-1} \int_M f_j \d\mu_{\sigma^{k_j}\omega} \right| \leq \sqrt{\psi\left(\min_{0\leq j\leq m-2} (k_{j+1}-k_j)\right)} \prod_{j=0}^{m-1} \|f_j\|_{\mathfrak{B}}
\end{equation*}
if
\begin{equation}\label{asm:gaps}
k_{j+1} - k_j \geq N^*_m(\sigma^{k_j}\omega) \quad \forall j=0,..,m-1.
\end{equation}
It is straightforward to see how \eqref{asm:gaps} prevents the argument in \eqref{unifspeed}, and uniform speed of mixing does not follow.

For simplicity, and in view of the limit theorems proved below, we restrict to the case of stretched exponential mixing rates. More explicitly there are constants $\gamma>0$ and $\theta\in (0,1]$ such that 
\begin{equation*}
\psi(n) = e^{-\gamma n^{\theta}}.
\end{equation*}
In this case, it holds that 
\begin{equation}\label{n*crel}
N^*_m(\omega) = \left( \frac{2}{\gamma} \log(m C(\omega)) \right)^{\frac{1}{\theta}}.
\end{equation}

Motivated by this, we are ready to formulate our main definition.

\begin{definition}[Quenched stretched exponential mixing of all orders]\label{qmemdef}
Let $\mathfrak{B}$ be a Banach algebra. We say that $(T_{\omega})$ is quenched stretched exponentially mixing of order $m$ in $\mathfrak{B}$ if there exists a measurable function $N^*_m : \Omega \to \mathbb{N}$ and constants $\gamma_m>0$, $\theta_m \in (0,1]$ such that, for all $f_0, \dots, f_{m-1} \in \mathfrak{B}$ and integers $0 = k_0 \leq \dots \leq k_{m-1}$ satisfying
\begin{equation*}
k_{j+1} - k_j \geq N^*_m(\sigma^{k_j}\omega),
\end{equation*}
it holds that
\begin{equation}\label{asm:qmem}
\begin{aligned}
\left| \int_M \prod_{j=0}^{m-1} f_j \circ T_{\omega}^{k_j} \d\mu_{\omega} - \prod_{j=0}^{m-1} \int_M f_j \d\mu_{\sigma^{k_j}\omega} \right|
\leq e^{-\gamma_m \min_{0\leq j\leq m-2} (k_{j+1}-k_j)^{\theta_m}} \prod_{j=0}^{m-1} \|f_j\|_{\mathfrak{B}} \quad \text{for $\P$-a.e.\ $\omega$}.
\end{aligned}
\end{equation}
The function $N^*_m$ will be referred to as the random threshold.

For $m=2$, we simply say that $(T_{\omega})$ is quenched stretched exponentially mixing.

We say that $(T_{\omega})$ is quenched stretched exponentially mixing of all orders in $\mathfrak{B}$ if \eqref{asm:qmem} is satisfied for all $m\geq 2$, and $\gamma_m=\gamma$ and $\theta_m=\theta$ are independent of $m$.

If $\theta = 1$, we omit the word stretched and say that $(T_{\omega})$ is quenched exponentially mixing.
\end{definition}

To simplify notation, we will omit the dependence of $N^*_m$ on $m$ and simply write $N^*$. In the proofs below, the precise dependence on $m$ plays no role, since all estimates are carried out for fixed finite $m$. Moreover, in the examples considered in Section~\ref{explesec}, the dependence on $m$ only affects multiplicative constants, and therefore does not influence the integrability properties of $N^*_m$.

\begin{remark}
We highlight here the role of stretched exponential mixing.

In many natural examples, $(T_{\omega})$ is quenched exponentially mixing of all orders, but the associated threshold $N^*(\omega)$ fails to be integrable. A typical situation is that \eqref{linfmix} holds with $\psi(n)=e^{-\gamma n}$ and a random constant $C(\omega)$ which is only tempered, and not log-integrable. In this case (see also Lemma \ref{linfmemlem}), $(T_{\omega})$ is quenched exponentially mixing of all orders, but the corresponding threshold $N^*(\omega)$ is not integrable.

Nevertheless, it is sometimes possible to restore integrability of the threshold by weakening the rate of mixing and passing to a stretched exponential regime (see Remark \ref{renewalstretchrem}).

This illustrates a key distinction with the deterministic setting: there, exponential mixing (e.g.\ via a spectral gap) is typically sufficient, whereas in the random setting one must balance the rate of mixing with the integrability of the associated threshold. In particular, stretched exponential mixing is not merely a weaker substitute, but sometimes the natural regime in which limit theorems can be established.
\end{remark}

Additionally, we assume the following regularity property.

(R) There exists a constant $K>0$ such that for all $f \in \mathfrak{B}$ and $\mathbb{P}$-almost all $\omega$, one has
\begin{equation*}
f \circ T_{\omega} \in \mathfrak{B} \quad \text{and} \quad \|f \circ T_{\omega}\|_{\mathfrak{B}} \leq K \|f\|_{\mathfrak{B}}.
\end{equation*}

This assumption ensures that iterates remain exponentially controlled in the $\mathfrak{B}$-norm.

The above definition is formulated for a general Banach space $\mathfrak{B}$. In the smooth setting, one may take $\mathfrak{B} = C^{\kappa}$, while in non-uniform or discontinuous settings, one can work with spaces such as Sobolev spaces or spaces of bounded variation.

Even though the definition of quenched mixing of all orders depends on the choice of Banach space $\mathfrak{B}$, for simplicity we omit $\mathfrak{B}$ from the notation.

\section{The Central Limit Theorem (CLT)}\label{cltsec}

Here we consider collections of functions $\textbf{f} = (f_{\omega})_{\omega\in \Omega}$ such that each $f_{\omega}\in \mathfrak{B} \cap L^{\infty}$ is a bounded function in the Banach algebra, with uniform norm in $\omega$, and centred with respect to $\mu_{\omega}$. More precisely, we consider the collection
\begin{equation*}
\bar{\mathfrak{B}} := \left\{\mathbf{f} = (f_{\omega})_{\omega\in \Omega} \:\left|
\begin{aligned} 
&\forall\omega\in \Omega \text{ it holds that } f_{\omega}\in \mathfrak{B} \text{ and } \int_M f_{\omega} \d\mu_{\omega} = 0,\\
&\sup_{\omega \in \Omega} \|f_{\omega}\|_{\mathfrak{B}} + \|f_{\omega}\|_{L^{\infty}} <\infty
\end{aligned}
\right. \right\}.
\end{equation*}

\begin{theorem}\label{cltthm}
Suppose $(T_{\omega})$ is quenched stretched exponentially mixing of all orders, and (R) holds. Furthermore, assume that the random threshold $N^*_m\in L^1(\P)$ is integrable for each $m\geq 2$. Let $\mathbf{f}\in \bar{\mathfrak{B}}$ then there is a $\sigma_{\textbf{f}}\geq 0$ such that
\begin{equation}\label{cltcon}
\frac{S_{N, \omega}(\mathbf{f})}{\sqrt{N}} \xRightarrow{\mu_{\omega}} \mathcal{N}(0,\sigma_{\mathbf{f}}^2)  \quad \text{for $\P$-a.e.\ $\omega$},
\end{equation}
where $S_{N, \omega}(\mathbf{f})=\sum_{n=0}^{N-1} f_{\sigma^n\omega} \circ T_{\omega}^n$. Furthermore, for $\P$-a.e.\ $\omega$, it holds that
\begin{equation}\label{sigmadef}
\sigma_{\textbf{f}}^2=\lim_{N\rightarrow \infty} \frac{1}{N} \int_M (S_{N, \omega}(\mathbf{f}))^2 \d\mu_{\omega} = \E\left( \int_M f_{\omega}^2 \d\mu_{\omega} + 2\sum_{n=1}^{\infty} \int_M f_{\omega} \cdot f_{\sigma^n\omega} \circ T_{\omega}^n \d\mu_{\omega} \right).
\end{equation}
\end{theorem}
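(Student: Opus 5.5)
We prove the theorem by the method of cumulants, in the spirit of Björklund--Gorodnik \cite{bjorklundgordnik17}, adapted to the random threshold. Since the Gaussian law is determined by its moments, it suffices to show, for $\P$-a.e.\ $\omega$, three things. First, the variance $\frac1N\int_M S_{N,\omega}(\mathbf f)^2\d\mu_\omega$ converges to the deterministic constant $\sigma_{\mathbf f}^2$ given in \eqref{sigmadef}. Second, the $r$-th cumulant $\mathrm{cum}_r$ of $S_{N,\omega}(\mathbf f)$ under $\mu_\omega$ satisfies $N^{-r/2}\mathrm{cum}_r\to 0$ for every $r\geq 3$. Third, the first moment vanishes, which is immediate from the centring $\int f_\omega\d\mu_\omega=0$ and \eqref{fibrerel}.

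\textbf{Variance.} Expand
\begin{equation*}
\int_M S_{N,\omega}^2\d\mu_\omega=\sum_{n}\int f_{\sigma^n\omega}^2\d\mu_{\sigma^n\omega}+2\sum_{n<n'}\int_M f_{\sigma^n\omega}\cdot f_{\sigma^{n'}\omega}\circ T^{n'-n}_{\sigma^n\omega}\d\mu_{\sigma^n\omega},
\end{equation*}
using invariance \eqref{fibrerel}. Split the inner sum at lag $\ell=n'-n$. For $\ell\ge N^*_2(\sigma^n\omega)$, Definition~\ref{qmemdef} with $m=2$ bounds the term by $e^{-\gamma\ell^\theta}\sup\|f\|_{\mathfrak B}^2$, which is summable. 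For $\ell<N^*_2(\sigma^n\omega)$, bound each term trivially by $\sup\|f\|_\infty^2$. Consequently the function $\Phi(\omega)=\int f_\omega^2\d\mu_\omega+2\sum_{\ell\ge1}\int f_\omega\, f_{\sigma^\ell\omega}\circ T^\ell_\omega\d\mu_\omega$ is well defined and dominated by $C(1+N^*_2(\omega))\in L^1(\P)$. Birkhoff's ergodic theorem for $\sigma$ then gives the limit $\E(\Phi)$. The truncation error from $n'$ exceeding $N$ is controlled by a tail sum. That sum is $o(N)$ because $\frac1N\sum_{n<N}\min(N^*_2(\sigma^n\omega),N-n)\to0$, which follows from $N^*_2\in L^1$, since $N^*_2(\sigma^n\omega)=o(n)$ almost surely.

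\textbf{Higher cumulants.} Write $\mathrm{cum}_r=\sum_{n_1,\dots,n_r<N}\mathrm{cum}(f_{\sigma^{n_i}\omega}\circ T^{n_i}_\omega)$. Following \cite{bjorklundgordnik17}, for a sorted tuple $n_{(1)}\le\dots\le n_{(r)}$ we fix a scale parameter and use the nested partition construction. Consider all gaps of the tuple. If all gaps are small, i.e.\ the tuple lies in an $O(\Delta)$-cluster, the number of such tuples is $O(N\Delta^{r-1})$. Otherwise, at a suitable level the tuple splits into separated blocks, i.e.\ a partition whose consecutive blocks are separated by a gap much larger than the internal gaps. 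In that case the joint cumulant is small, because cumulants vanish when the variables split into independent groups. Mixing of order $r$, applied to the block products, shows that each moment factorises up to error $e^{-\gamma\,\mathrm{gap}^\theta}$ times products of $\mathfrak B$-norms. Those norms are controlled using that $\mathfrak B$ is a Banach algebra and (R): a block of diameter $D$ has norm at most $K^{D}\sup\|f\|^{\#}$. Hence the internal diameters must be $\ll$ the gap divided by $\log K$, and stretched exponential decay beats $K^{D}$ provided the scales grow geometrically with ratio adapted to $\theta$. Choosing $\Delta\asymp(\log N)^{C}$ then yields $|\mathrm{cum}_r|\ll N(\log N)^{C_r}+\text{(threshold corrections)}=o(N^{r/2})$ for $r\ge3$.

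\textbf{Main obstacle: the threshold condition.} Mixing applies only when each separating gap starting at time $k_j$ exceeds $N^*_r(\sigma^{k_j}\omega)$. Call a time $n$ \emph{bad} at scale $L$ if $N^*_r(\sigma^n\omega)>L$. When the separating gap in a tuple begins at a bad time, we cannot factorise. Instead we treat that separation as small, merging across it and paying a combinatorial factor. Because $N^*_r\in L^1$, Birkhoff gives $\#\{n<N:N^*_r(\sigma^n\omega)>L\}\le N\,\P(N^*_r>L)+o(N)$. Moreover, since $N^*_r(\sigma^n\omega)=o(n)$ almost surely, for large $N$ every threshold along $n<N$ is $o(N)$. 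So the contribution of tuples whose separating gap sits at a bad point is bounded by $N^{r/2-\epsilon}$-type counts, once the gap scale is chosen as a power $N^{\delta}$ rather than polylogarithmic. The stretched exponential decay $e^{-\gamma N^{\delta\theta}}$ still dominates all $K^{D}$ losses. Balancing $\delta$ small against the cluster count $N\cdot N^{(r-1)\delta}=o(N^{r/2})$ for $r\ge3$ is the delicate bookkeeping step, and we expect it to be the crux of the proof. Once these bounds hold, the method of moments gives \eqref{cltcon}. If $\sigma_{\mathbf f}=0$, the variance bound alone gives convergence to $\delta_0$.
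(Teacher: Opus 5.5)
You have the variance step and the Bj\"orklund--Gorodnik treatment of tuples whose separations all occur at good times right in outline. The proof breaks at the threshold violations, which are the actual crux.

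\textbf{Why counting bad tuples fails.} Under $N^*\in L^1(\P)$ alone, the only information is $\sum_{n<N}N^*(\sigma^n\omega)\one_{\{N^*(\sigma^n\omega)>L_N\}}=o(N)$ for $L_N\to\infty$ (Lemma~\ref{intsumlem}). This comes with no rate, since Birkhoff's theorem gives none.

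Consequences:
\begin{itemize}
\item The set $BP$ of pairs $(k,k')$ with $N^*(\sigma^k\omega)>L_N$ and $0<k'-k<N^*(\sigma^k\omega)$ satisfies only $\#BP=o(N)$.
\item Single windows $[k,k+N^*(\sigma^k\omega))$ with $k<N$ may have length a positive power of $N$.
\item \eqref{asm:qmem} checks the threshold at the first index of each block, and cumulant bounds need factorisation of every sub-collection. So the relevant notion is this pairwise one, not whether ``the separating gap begins at a bad time''.
\item Inside such a window the hypothesis gives no decorrelation between $X_k=f_{\sigma^k\omega}\circ T^k_\omega$ and the other indices of the window. \eqref{asm:qmem} cannot separate them because the threshold at $k$ is violated, and merging them into one block costs a $\mathfrak B$-norm of order $K^{k'-k}$.
\item Replacing $N^*$ by $\max(N^*,Z)$, for any integer-valued $Z\in L^1(\P)$, preserves every hypothesis. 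So a proof can bound these correlations individually only by $O(1)$.
\end{itemize}

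The tuples you must count therefore include, for $r=3$, all triples inside one window. Their number is of order $\sum_{k<N}N^*(\sigma^k\omega)^2\one_{\{N^*(\sigma^k\omega)>L_N\}}$. With $\P(N^*>t)\asymp t^{-1-\eta}$ and $\eta<1/3$, this can be of order $N^{2/(1+\eta)}\gg N^{3/2}$.

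No admissible scale $N^{\delta}$ helps. Your own cluster count forces $\delta<1/4$ for $r=3$, while these windows have length about $N^{1/(1+\eta)}>N^{3/4}$. So the announced ``$N^{r/2-\epsilon}$-type counts'' do not exist, and termwise bounds on bad tuples cannot close the argument.

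\textbf{The missing idea.} You need to exploit cancellation in the indices not tied to a bad pair, instead of counting them. The paper does this as follows:
\begin{itemize}
\item It fixes the bad pairs by inclusion--exclusion over collections of pairs, \eqref{splitbadind}.
\item It sums the remaining indices inside the integral, so they assemble into partial sums $S^{\omega}_A=\sum_{k\in A}f_{\sigma^k\omega}\circ T^k_\omega$ over constrained index sets $A$.
\item It applies H\"older's inequality together with the a priori bound $(\text{Sub}_m)$: $\int_M|S^{\omega}_A|^m\d\mu_\omega\ll N^{m/2}$ uniformly in $A\subset\{0,\dots,N-1\}$. This bound is itself proved by induction on $m$ through the same good/bad splitting.
\end{itemize}
This yields $(\#BP)^aN^{(m-2a)/2}=o(N^{m/2})$ from $\#BP=o(N)$ alone.

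For $r=3$ this is simply $\bigl|\int_M X_kX_{k'}S_{N,\omega}(\mathbf f)\d\mu_\omega\bigr|\ll\sqrt N$ for each $(k,k')\in BP$, replacing $N$ separate $O(1)$ bounds. To repair a cumulant proof you would need exactly this ingredient: expand the cumulants of bad tuples into moments and control those through uniform moment bounds for $S^\omega_A$. This is why working with moments directly is the natural route here.

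\textbf{A smaller slip in the variance step.} The truncation error is $\sum_{n<N}\max\bigl(N^*_2(\sigma^n\omega)-(N-n),0\bigr)$, not $\sum_{n<N}\min\bigl(N^*_2(\sigma^n\omega),N-n\bigr)$; the average of the latter tends to $\E(N^*_2)$, not to $0$. Also, $N^*_2(\sigma^n\omega)=o(n)$ alone does not make the correct error $o(N)$. Split at $n=N-\sqrt N$ and use Lemma~\ref{intsumlem} together with Birkhoff, as in Lemma~\ref{sigmaexlem}.
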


We refer to \eqref{cltcon} as the \emph{quenched central limit theorem} (quenched CLT) for $(T_{\omega})$.

It is important to understand when the variance $\sigma_{\mathbf{f}}^2$ vanishes.

\begin{definition}
A collection of functions $\mathbf{f} = (f_{\omega})_{\omega \in \Omega}$ is called a random coboundary in $L^2$ if there exists a family $(g_{\omega})_{\omega \in \Omega}$ such that
\begin{itemize}
\item $g_{\omega} \in L^2(\mu_{\omega})$ for $\P$-almost every $\omega$,
\item $\|g_{\omega}\|_{L^2(\mu_{\omega})} \in L^2(\P)$,
\item and, for $\mathbb{P}$-almost every $\omega$, it holds that
\begin{equation*}
f_{\omega} = g_{\omega} - g_{\sigma \omega} \circ T_{\omega},
\end{equation*}
where the equality is to be understood in $L^2(\mu_{\omega})$.
\end{itemize}
\end{definition}

In this case, for $N \ge 1$, the sum $S_{N, \omega}(\mathbf{f})$ telescopes to
\begin{equation}\label{telescope}
S_{N, \omega}(\mathbf{f}) = g_{\omega} - g_{\sigma^N \omega} \circ T_{\omega}^{N}.
\end{equation}

The following Proposition from \cite{DFGTV18a} gives a precise criterion of when the asymptotic variance vanishes. As our setting slightly varies from theirs, we shall include a proof for convenience.

\begin{proposition}\label{simgaprop}
Suppose that $(T_{\omega})$ is quenched stretched exponentially mixing in a dense Banach space $\mathfrak{B}\subset L^{\infty}$ and $N^* \in L^2(\mathbb{P})$. Let $\mathbf{f}\in \bar{\mathfrak{B}}$. Then $\sigma_{\mathbf{f}} = 0$ if and only if $\mathbf{f}$ is a random coboundary in $L^2$.
\end{proposition}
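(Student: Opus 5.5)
The plan is to follow the strategy of \cite{DFGTV18a}: build the transfer (dual) operators along fibres and define $g_\omega$ as a convergent series of pushed-forward correlations. One direction is immediate. If $\mathbf{f}$ is a random coboundary in $L^2$, then \eqref{telescope} and invariance \eqref{fibrerel} give
\begin{equation*}
\int_M (S_{N,\omega}(\mathbf{f}))^2\d\mu_\omega\le 2\|g_\omega\|^2_{L^2(\mu_\omega)}+2\|g_{\sigma^N\omega}\|^2_{L^2(\mu_{\sigma^N\omega})}.
\end{equation*}
Averaging over $\omega$ shows that $\frac1N\E\int_M (S_{N,\omega})^2\d\mu_\omega\to 0$. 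Since $\frac1N\int_M (S_{N,\omega})^2\d\mu_\omega$ is bounded uniformly in $N$ and $\omega$ by $\sup_\omega\|f_\omega\|_{L^\infty}^2$ times the summed correlations, dominated convergence together with \eqref{sigmadef} gives $\sigma_{\mathbf{f}}=0$. This requires only an integrable bound on the correlation sum, which I derive below.

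For the converse I first control the correlations $c_n(\omega):=\int_M f_\omega\cdot f_{\sigma^n\omega}\circ T^n_\omega\d\mu_\omega$. For $n\ge N^*(\omega)$, Definition~\ref{qmemdef} with $m=2$ and centring give $|c_n(\omega)|\le e^{-\gamma n^\theta}\|\mathbf f\|^2$. For $n<N^*(\omega)$, the trivial bound $\|\mathbf f\|_\infty^2$ applies. Hence
\begin{equation*}
\sum_n|c_n(\omega)|\ll 1+N^*(\omega),
\end{equation*}
which is integrable, so $N^*\in L^2$ is more than enough here. Next I introduce the fibre transfer operators $L_\omega:L^1(\mu_\omega)\to L^1(\mu_{\sigma\omega})$, dual to composition with $T_\omega$, and set
\begin{equation*}
h_\omega:=\sum_{n\ge1}L^n_{\sigma^{-n}\omega}f_{\sigma^{-n}\omega}\quad\text{in } L^2(\mu_\omega).
\end{equation*}
To get convergence of this series I expand $\|\sum_{n=1}^K L^n_{\sigma^{-n}\omega} f_{\sigma^{-n}\omega}\|_{L^2}^2$ into double sums of correlations $\int f_{\sigma^{-n}\omega}\cdot f_{\sigma^{-k}\omega}\circ T^{n-k}_{\sigma^{-n}\omega}\d\mu_{\sigma^{-n}\omega}$ with $n>k$. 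Each such term is bounded by $e^{-\gamma(n-k)^\theta}$ whenever $n-k\ge N^*(\sigma^{-n}\omega)$, and by a constant otherwise. This alone does not give a Cauchy sequence, since the terms do not decay in $n$ individually. This is the main obstacle. What I try first is to work instead with the martingale--coboundary decomposition in $L^2(\nu)$ on the skew product: the function $F(x,\omega)=f_\omega(x)$ has summable autocorrelations $\int F\cdot F\circ\S^n\d\nu=\E c_n$, because $\E N^*<\infty$. The $L^2$ integrability of $N^*$ is needed to make the cross terms $\sum_{k,n}$ in the norm of $\sum_{n\le K}\mathcal L^nF$ (with $\mathcal L$ the transfer operator of $\S$, $(\mathcal L^nF)(\cdot,\omega)=L^n_{\sigma^{-n}\omega}f_{\sigma^{-n}\omega}$) summable in expectation, with a bound of order $\E (N^*)^2$. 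The target is a Gordin-type decomposition $F=\chi+G-G\circ\S$, where $\chi$ is a reverse martingale difference ($\mathcal L\chi=0$) and $G=\sum_{n\ge1}\mathcal L^nF\in L^2(\nu)$. If the series for $G$ fails to converge, I will fall back to a Heyde-type argument: show $\|\mathcal L^nF\|_{L^2(\nu)}$ is summable via the second-order mixing estimate with the random threshold, using density of $\mathfrak B$ to test against observables.

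Given the decomposition, the variance computation gives $\sigma^2_{\mathbf f}=\int\chi^2\d\nu$, using \eqref{sigmadef} and the fact that the coboundary part is $O(1)$ in $L^2(\nu)$ after division by $N$. Thus $\sigma_{\mathbf f}=0$ forces $\chi=0$, so $F=G-G\circ\S$ in $L^2(\nu)$. Disintegrating, I set $g_\omega:=G(\cdot,\omega)$. Then $\int\|g_\omega\|^2_{L^2(\mu_\omega)}\d\P=\|G\|^2_{L^2(\nu)}<\infty$, and $f_\omega=g_\omega-g_{\sigma\omega}\circ T_\omega$ holds in $L^2(\mu_\omega)$ for a.e.\ $\omega$. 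This is exactly the random coboundary property.
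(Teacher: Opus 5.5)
\paragraph{The first direction.}
Your first direction is fine up to a small repair. The quantity $\frac1N\int_M(S_{N,\omega}(\mathbf f))^2\d\mu_\omega$ is not bounded uniformly in $\omega$; your own bound is $\frac1N\sum_{n<N}(1+N^*(\sigma^n\omega))$. No dominating function is needed, though. Expanding the square shows directly that $\frac1N\E\int_M(S_{N,\omega}(\mathbf f))^2\d\mu_\omega\to\sigma_{\mathbf f}^2$. Alternatively, use Fatou's lemma together with Lemma~\ref{sigmaexlem}.

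\paragraph{The gap in the converse.}
Your converse rests entirely on convergence in $L^2(\nu)$ of $G=\sum_{n\ge1}\mathcal L^nF$. Your fallback rests on summability of $\|\mathcal L^nF\|_{L^2(\nu)}$. Neither can be obtained from the hypotheses.

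If the $T_\omega$ are invertible (a case the paper explicitly wants to cover), then $\S$ is invertible and $\mathcal L h=h\circ\S^{-1}$ is unitary. Hence $\|\mathcal L^nF\|_{L^2(\nu)}=\|F\|_{L^2(\nu)}$ for all $n$, and the series diverges unless $F=0$. Moreover $\mathcal L\chi=0$ forces $\chi=0$, so the decomposition $F=\chi+G-G\circ\S$ is literally the conclusion you want.

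Without invertibility, $\|\mathcal L^nF\|_{L^2(\nu)}=\sup_{\|h\|_{L^2(\nu)}\le1}\int F\cdot h\circ\S^n\d\nu$ still requires decay of correlations against arbitrary $L^2$ observables. That is an asymmetric estimate of type \eqref{linfmixdefintro}, whereas Definition~\ref{qmemdef} controls only correlations between two $\mathfrak B$-functions. Density of $\mathfrak B$ gives only weak convergence $\mathcal L^nF\rightharpoonup0$, not norm decay.

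Your bound $\|\sum_{n\le K}\mathcal L^nF\|^2_{L^2(\nu)}\ll\E((N^*)^2)$ is also wrong as stated. In the invertible case this norm equals $\|\sum_{k<K}F\circ\S^k\|^2_{L^2(\nu)}=K\sigma_{\mathbf f}^2+O(1+\E((N^*)^2))$. So it is bounded only once $\sigma_{\mathbf f}=0$ is used, and boundedness alone does not give convergence.

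\paragraph{The missing idea.}
Apply $\sigma_{\mathbf f}=0$ to the Birkhoff sums themselves and pass to a weak limit; this is the paper's argument. With $c_k$ as you defined it and $\mathbf g_N=\sum_{k=0}^{N-1}F\circ\S^k$, one has
\[
\|\mathbf g_N\|^2_{L^2(\nu)}=N\sigma_{\mathbf f}^2-2N\sum_{k\ge N}\E(c_k)-2\sum_{k=1}^{N-1}k\,\E(c_k).
\]
Split each sum at $k=N^*(\omega)$. When $\sigma_{\mathbf f}=0$ this gives $\sup_N\|\mathbf g_N\|^2_{L^2(\nu)}\ll1+\E((N^*)^2)$, and this is exactly where $N^*\in L^2(\P)$ is needed.

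By Banach--Alaoglu, some subsequence satisfies $\mathbf g_{N_j}\rightharpoonup\mathbf g$ weakly in $L^2(\nu)$, and $h\mapsto h\circ\S$ is weakly continuous. Since $F-\mathbf g_N+\mathbf g_N\circ\S=F\circ\S^N$, for every $\mathbf h$ with $\sup_\omega\|\mathbf h(\cdot,\omega)\|_{\mathfrak B}<\infty$ you get $\int\mathbf h\cdot(F-\mathbf g+\mathbf g\circ\S)\d\nu=\lim_j\int\mathbf h\cdot F\circ\S^{N_j}\d\nu$.

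Use the mixing estimate when $N\ge N^*(\omega)$ and the trivial bound otherwise. This gives $\bigl|\int\mathbf h\cdot F\circ\S^N\d\nu\bigr|\ll e^{-\gamma N^\theta}+\P(N^*>N)\to0$. Density of $\mathfrak B$ then yields $F=\mathbf g-\mathbf g\circ\S$ in $L^2(\nu)$, and $g_\omega=\mathbf g(\cdot,\omega)$ is the required transfer function.

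This route only ever uses correlations between two $\mathfrak B$-observables, which is why it also covers invertible fibre maps.
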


\section{The Poisson Limit Theorem (PLT)}\label{pltsec}

For the PLT, we will, at various points, approximate indicators of balls with functions in $\B$. To avoid the technicalities that come from approximations in general Banach spaces, we will assume that $\mathfrak{B}=C^{\kappa}$ and replace assumption (R) by

(R') It holds that $\|(T_{\omega})\|_{C^{\kappa}}:=\sup_{\omega\in \Omega} \|T_{\omega}\|_{C^{\kappa}} <\infty$.

For a set $A \subset M$ its \emph{random first hitting time} are given by
\begin{equation*}
\f_A(x,\omega):= \min(k\geq 1 \;|\; T_{\omega}^k x \in A) = \min(k\geq 1\;|\; \S^k(x,\omega) \in A \times \Omega).
\end{equation*}
Since $(\S,\nu)$ is ergodic (quenched mixing implies that $\S$ is mixing as well), it holds that $\nu(\f_A < \infty)=1$. When restricted to $A\times \Omega$, we call $\f_A|_{A\times \Omega}$ the \emph{random first return time}.
For $j\geq 1$ the \emph{random $j$th return time} is defined as $\f^{(1)}_A(x,\omega)=\f_A(x,\omega)$ and
\begin{equation*}
\f^{(j)}_A(x,\omega):= \min(k\geq 1 \;|\; T_{\omega}^{\f^{(j-1)}_A(x,\omega) + k} x \in A) = \min(k\geq 1\;|\; \S^{\f^{(j-1)}_A(x,\omega) + k}(x,\omega) \in A \times \Omega)
\end{equation*}
and the \emph{random sequence of return times} is
\begin{equation*}
\Phi_A(x,\omega) := (\f^{(1)}_A(x,\omega),\f^{(2)}_A(x,\omega),...)
\end{equation*}
Since $(\S,\nu)$ is ergodic, Kac's formula dictates $\int \f_A \d\nu|_{A\times \Omega} = \frac{1}{\mu(A)}$. Therefore one would hope for
\begin{equation*}
\mu(A) \Phi_A(\cdot, \omega) \xRightarrow{\mu_{\omega}|_A} \text{something} \quad \text{as } \mu(A)\to 0 \quad \text{for $\P$-a.e.\ $\omega$}.
\end{equation*}
The typical situation is when $A$ is a ball of shrinking radius centred at a point $x^* \in M$ and the limit is an i.i.d.\ sequence of standard exponentially distributed random variables.

\begin{remark}[Heuristic form of the Poisson limit theorem]
For $\nu$-a.e\ $(x^*,\omega)$, one expects that
\begin{equation}\label{poisdef}
\mu(B_r(x^*)) \Phi_{B_r(x^*)}(\cdot, \omega)
\xRightarrow{\mu_{\omega}|_{B_r(x^*)}} \Phi_{Exp}
\quad \text{as } r \to 0,
\end{equation}
where $\Phi_{Exp}$ is an i.i.d.\ sequence of standard exponential random variables.
\end{remark}

\begin{remark}
In addition to the return-time formulation \eqref{poisdef}, one can consider the hitting-time version
\begin{equation}\label{poishitdef}
\mu(B_r(x^*)) \Phi_{B_r(x^*)}(\cdot, \omega)
\xRightarrow{\mu_{\omega}} \Phi_{Exp}
\quad \text{as } r \to 0,
\end{equation}
where the first marginal now corresponds to the first hitting time.

In the deterministic setting, \eqref{poisdef} and \eqref{poishitdef} are equivalent. More generally, even when the limiting law is not exponential, the two limits are linked by an integral relation (see \cite[Main Theorem]{HLV05}). This relation can be exploited in proofs: if one shows that the limit of $\mu(B_r(x^*)) \Phi_{B_r(x^*)}$ is unchanged when restricting to $B_r(x^*)$, then the limit is necessarily exponential. This strategy is used, for instance, in \cite{Z22rare}.

In the random setting, however, no analogue of the relation in \cite[Main Theorem]{HLV05} is known in general. For this reason, in Theorem \ref{pltthm} we establish both \eqref{poisdef} and \eqref{poishitdef} separately.
\end{remark}

Note crucially that it is equivalent to show that the number of visits to the ball $B_r(x^*)$ converge, after suitable normalisation, to a standard Poisson process. Indeed, first observe that return times are related to the number of visits to $B_r(x^*)$ by
\begin{equation*}
\sum_{j=1}^J \f^{(j)}_{B_r(x^*)}(x,\omega) \leq N \iff S_{N,\omega}(\one_{B_r(x^*)}) \geq J,
\end{equation*}
where, by slight abuse of notation, we use the convention $S_{N,\omega}(f)=\sum_{n=1}^N f\circ T_{\omega}^n$ instead. Indeed, the left side says that we have to wait at most up to time $N$ to visit $B_r(x^*)$ $J$ times, while the right side says that we have visited $B_r(x^*)$ at least $J$ times up to time $N$. Normalising correctly and taking limits, it is straightforward to show that \eqref{poisdef} is equivalent to 
\begin{equation}\label{sumpoiscon}
\left(S_{\fceil{t}{\mu(B_r(x^*))},\omega} (1_{B_r(x^*)}) \right)_{t>0} \xRightarrow{\mu_{\omega}|_{B_r(x^*)}} \mathcal{P} \quad \text{as } r\to 0.
\end{equation}

In addition to the standing assumptions \eqref{asm:qmem} and (R'), we impose the following hypotheses, that
\begin{itemize}
\item guarantee that the measure of a ball is comparable for all $\omega$,
\item and rule out periodicity, which clearly would obstruct \eqref{poisdef}.
\end{itemize}

(VOL) For $\P$-almost all $\omega$, the fibre measures $\mu_{\omega}$ are absolutely continuous with respect to the volume measure $\vol$, and the density $\rho_{\omega}=\frac{\d \mu}{\d\vol}$ is continuous $\vol$-a.e. Furthermore it holds that 
\begin{equation}\label{asm:volcontrol}
\esssup_{(x,\omega)\in M\times \Omega} \rho_{\omega}(x) <\infty,
\end{equation}
where $\esssup$ is the essential supremum with respect to $\vol \times \P$.

(APER) The system is \emph{random aperiodic}, it holds that
\begin{equation}\label{asm:aper}
\nu((x,\omega) \;|\; \exists n\in\N \text{ such that } T_{\omega}^n x = x) = 0.
\end{equation} 

\begin{remark}
It seems that the reference measure $\vol$ in (VOL) can be replaced by a more general measure $\lambda$, provided $\lambda$ has positive dimension almost everywhere. This allows one to treat situations where the fibre measures are supported on fractal sets, similarly to condition (II') in \cite{RSV14}.

In this case, additional regularity assumptions on $\lambda$ are required, such as bounds on the measure of annuli as in condition (IV') of \cite{RSV14}. For simplicity of exposition, we restrict to the case $\lambda = \vol$.
\end{remark}

\begin{theorem}\label{pltthm}
Suppose $(T_{\omega})$ is quenched stretched exponentially mixing of all orders in $C^{\kappa}$, and (R'), (VOL), and (APER) hold. Then, for $\nu$-a.e\ $(x^*,\omega)\in M\times \Omega$, it holds that
\begin{equation}\label{hitconthmclaim}
\mu(B_r(x^*)) \Phi_{B_r(x^*)}(\cdot, \omega) \xRightarrow{\mu_{\omega}} \Phi_{Exp} \quad \text{as } r\to 0,
\end{equation}
and
\begin{equation}\label{retconthmclaim}
\mu(B_r(x^*)) \Phi_{B_r(x^*)}(\cdot, \omega) \xRightarrow{\mu_{\omega}|_{B_r(x^*)}} \Phi_{Exp} \quad \text{as } r\to 0,
\end{equation}
where $\Phi_{Exp}$ is an i.i.d.\ sequence of standard exponentially distributed random variables.
\end{theorem}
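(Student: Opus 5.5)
We prove the hitting-time statement \eqref{hitconthmclaim} by the method of factorial moments, in the counting-process form \eqref{sumpoiscon} (with $\mu_{\omega}$ in place of $\mu_{\omega}|_{B_r(x^*)}$). Then we deduce \eqref{retconthmclaim} by the same computation with one extra factor $\one_{B_r(x^*)}$ at time $0$.

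\textbf{Setup.} Fix a typical $(x^*,\omega)$ and write $B=B_r(x^*)$ and $N_r=\lceil t/\mu(B)\rceil$. Since the Poisson law is determined by its moments, it suffices to prove, for every finite family of disjoint intervals $I_1,\dots,I_p\subset[0,t]$ and exponents $m_1,\dots,m_p$, that the joint factorial moments
\begin{equation*}
\int_M \prod_{i}\binom{S_{I_i/\mu(B),\omega}(\one_B)}{m_i}\, m_i!\,\d\mu_{\omega}
\end{equation*}
converge to $\prod_i |I_i|^{m_i}$. Expanding, this is a sum over $m=\sum m_i$ distinct times $k_1<\dots<k_m$ of $\int\prod_j\one_B\circ T^{k_j}_{\omega}\d\mu_{\omega}$.

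\textbf{Step 1: smoothing.} Replace $\one_B$ by $C^{\kappa}$ functions $h^{\pm}$ squeezed between $\one_{B_{r\mp r^{a}}}$ and $\one_{B_{r\pm r^a}}$, with $\|h^\pm\|_{C^\kappa}\ll r^{-a\kappa}$. By (VOL) and the annulus measure being $O(r^{d-1+a})$, the substitution costs $o(1)$ after multiplying by $N_r\ll r^{-d}$; this needs a lower bound $\mu(B)\gg r^{d}$ at $\nu$-a.e.\ $x^*$, which follows from the Lebesgue density theorem applied to $\rho$.

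\textbf{Step 2: split the tuples.} Split the $m$-tuples $(k_j)$ into good and bad tuples. A tuple is \emph{good} if every gap satisfies $k_{j+1}-k_j\geq \max(L_r,N^*(\sigma^{k_j}\omega))$, where $L_r=(C|\log r|)^{1/\theta}$ is large enough that $e^{-\gamma L_r^{\theta}}r^{-a\kappa m}N_r^{m}\to0$.
\begin{itemize}
\item On good tuples, \eqref{asm:qmem} gives $\prod_j\mu_{\sigma^{k_j}\omega}(B)$ up to negligible error.
\item A quenched ergodic averaging step then shows that $\sum_{k\in I/\mu(B)}\mu_{\sigma^k\omega}(B)\to|I|$. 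For this we use the ergodic theorem for $\sigma$ applied to $\mu_{\cdot}(B)/\mu(B)$. Because $B$ varies with $r$, we restrict to a countable sequence $r_n\to0$ with monotonicity interpolation, and use the uniform density bound to get a.s.\ convergence via a Borel--Cantelli argument on blocks.
\item Tuples that are good except for the random-threshold condition are controlled as follows. Times $k$ with $N^*(\sigma^k\omega)>L_r$ form a set whose density tends to $0$ as $r\to0$, with no integrability needed, since $\P(N^*>L)\to0$. These times contribute a vanishing proportion, again by the uniform bound on $\mu_{\sigma^k\omega}(B)/\mu(B)$ coming from (VOL). This exclusion of a sparse set of times is the mechanism of Lemma~\ref{badseqclem1}.
\end{itemize}

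\textbf{Step 3: short returns (main obstacle).} The main obstacle is showing
\begin{equation*}
N_r\sum_{1\le k\le L_r}\mu_{\sigma^{k_0}\omega}\left(B\cap T^{-k}_{\sigma^{k+k_0}\omega}B\right)\to0
\end{equation*}
along typical orbits. This controls tuples in which two consecutive times are closer than $L_r$.
\begin{itemize}
\item For $k\le L_r$, (R') makes $T^k$ Lipschitz with constant $K^{k}$, which is subpolynomial in $r$ only for $\theta=1$ and logarithmic $L_r$. So we handle $k$ in two ranges.
\item \emph{Very short} $k$ ($k\leq \epsilon|\log r|$): we use (APER) and a density argument. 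The set of $(x^*,\omega)$ with $d(T^k_{\omega}x^*,x^*)<r^{b}$ for some $k\le \epsilon|\log r|$ has small $\nu$-measure, which we sum along $r_n=2^{-n}$ and apply Borel--Cantelli.
\item \emph{Intermediate} $k$: we apply mixing of order $2$ to the pair $(h^+,h^+\circ T^{k-k'})$ with a smaller smoothing scale, getting the bound $\mu(B)^2+$ an exponentially small error.
\end{itemize}
This recurrence estimate, uniform in the starting time $k_0\le N_r$ and holding for $\nu$-a.e.\ centre, is where the real work lies.

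\textbf{Return-time version.} For \eqref{retconthmclaim} we divide by $\mu_{\omega}(B)$ and include $k_0=0$. The same bounds apply, provided $\mu_\omega(B)\asymp r^d$ for $\nu$-a.e.\ $(x^*,\omega)$, which (VOL) and the density theorem give, and provided the short-return estimate is applied at the fixed starting time $0$.
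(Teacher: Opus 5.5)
Your architecture matches the paper's: factorial moments, discarding a sparse set of bad times, mixing on well-separated tuples, and a separate short-return estimate. But Step 3, which you rightly identify as the crux, does not close as designed.

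\textbf{The intermediate range.} For $\epsilon|\log r|<k\le L_r$ you apply order-two mixing at the scale of the ball, pointwise in the centre. The error this supplies is at least of order $e^{-\gamma k^{\theta}}$; the factor $\|h^+\|_{C^\kappa}^2\ge 1$, and any finer smoothing scale, only make it worse. Since you multiply by $N_r\asymp r^{-d}$, the error must be $o(r^d)$, which forces $\gamma k^{\theta}\ge d|\log r|$.
\begin{itemize}
\item For $\theta<1$ this fails on the entire window.
\item For $\theta=1$ it needs $\gamma\epsilon>d$. Your very-short range forces $\epsilon\log K<1$, so this would require $\gamma>d\log K$, a relation between mixing rate and Lipschitz bound that the hypotheses do not provide.
\end{itemize}
So neither of your two arguments covers this window.

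\textbf{The missing idea: average over the centre.} The key inequality is
\begin{equation*}
\int_M \mu_{\omega}\bigl(B_r(x)\cap T^{-n}_{\sigma^n\omega}B_r(x)\bigr)\d\vol(x)\ll r^d\,\mu_{\omega}\bigl(d(y,T^n_{\omega}y)<2r\bigr).
\end{equation*}
This gains the factor $r^d$ for free, so the near-diagonal probability on the right only needs to be of order $|\log r|^{-A}$. That bound follows from the two-point estimate \eqref{asm:2em2} at the coarse scale $\hat r=|\log r|^{-A}$ once $n\ge\sqrt{|\log r|}$. Markov's inequality and Borel--Cantelli over dyadic $r$ and $n\le|\log r|^B$ then give quenched slow recurrence (Definition~\ref{qsrpdef}) for $\vol$-a.e.\ $x^*$.

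For $n\le\sqrt{|\log r|}$ one has $K^n=r^{-o(1)}$, so a polynomial bound $\tau^{\omega}_r(x^*)\ge r^{-c}$ suffices (Lemma~\ref{nonrecRSlem}). That bound comes from Borel--Cantelli in the \emph{time} $n$ at scale $n^{-4/a}$, using mixing, with (APER) only disposing of finitely many $n$. (APER) is purely qualitative, so your Borel--Cantelli along $r_n=2^{-n}$ has nothing summable.

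\textbf{Three further points need repair.}

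First, the recurrence estimate cannot be uniform in $k_0\le N_r$. The radius below which it holds at the fibre $\sigma^{k_0}\omega$ can be arbitrarily small along the orbit, just as $N^*(\sigma^{k_0}\omega)$ can be arbitrarily large. As in Lemma~\ref{badrecseqlem}, you must remove a density-zero set of starting times from the counting process itself. Bounding their contribution trivially in the second moment only gives $O(\delta_r L_r)$, with $\delta_r$ their density, and this need not vanish.

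Second, the claim $\sum_{k\in I/\mu(B)}\mu_{\sigma^k\omega}(B)\to|I|$ is a triangular-array ergodic theorem. Boundedness of $\omega\mapsto\mu_{\omega}(B_r(x^*))/\mu(B_r(x^*))$ does not imply it. Borel--Cantelli on blocks would need a rate in the ergodic theorem, which a general ergodic $\sigma$ does not have. What you need is:
\begin{itemize}
\item the convergence $\mu_{\omega}(B_r(x^*))/\vol(B_r(x^*))\to\rho_{\omega}(x^*)$ for a.e.\ $\omega$ (the paper uses continuity of $\rho_\omega$ at $x^*$ from (VOL));
\item the ergodic theorem applied to $\omega\mapsto\rho_{\omega}(x^*)$;
\item control of the non-uniformity in $\omega$, either by discarding sparse times as in step (iii) of the proof of Theorem~\ref{pltthm}, or via the maximal function $\sup_{r\le R}|\mu_{\omega}(B_r(x^*))/\vol(B_r(x^*))-\rho_{\omega}(x^*)|$.
\end{itemize}

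Third, for $m\ge 3$, tuples that combine short and long gaps require separating cluster functions such as $h\cdot h\circ T^{k}$. Their $C^{\kappa}$-norm grows exponentially in $k$, so a single threshold $L_r$ cannot absorb this. You need a hierarchy of separation scales chosen by pigeonhole, each large enough that the mixing decay beats the norm growth at the previous scale (compare the scales $G_l$ in the proof of Theorem~\ref{cltthm}).
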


We refer to \eqref{hitconthmclaim}--\eqref{retconthmclaim} as the \emph{quenched Poisson limit theorem} (quenched PLT) for $(T_{\omega})$. 

\begin{remark}\label{memcondpoisrem}
The conclusion of Theorem~\ref{pltthm} remains valid if condition~\eqref{asm:qmem} in the definition of (QMEM) is replaced by the following more general assumption: there exists a locally bounded function $H_m: \mathbb{R}^{m-1} \to \mathbb{R}$ such that, for $\mathbb{P}$-almost every $\omega$, there is a constant $C(\omega) > 0$ with the property that for all $f_0,\dots,f_{m-1} \in \mathfrak{B}$ and all integers $0 \le k_0 \le \cdots \le k_{m-1}$,
\begin{equation}\label{asm:mem}
\begin{aligned}
&\left|\int_M \prod_{j=0}^{m-1} f_j \circ T_{\omega}^{k_j} \, d\mu_{\omega}
- \prod_{j=0}^{m-1} \int_M f_j \, d\mu_{\sigma^{k_j}\omega} \right| \\
&\quad \le H_m\left(C(\sigma^{k_0}\omega),\dots,C(\sigma^{k_{m-2}}\omega)\right)
\, e^{-\gamma \min_{0 \le j \le m-2}(k_{j+1}-k_j)^{\theta}}
\prod_{j=0}^{m-1} \|f_j\|_{\mathfrak{B}}.
\end{aligned}
\end{equation}
\end{remark}

Indeed, condition~\eqref{asm:qmem} implies \eqref{asm:mem}, since
\begin{equation*}
\begin{aligned}
&\left|\int_M \prod_{j=0}^{m-1} f_j \circ T_{\omega}^{k_j} \, d\mu_{\omega}
- \prod_{j=0}^{m-1} \int_M f_j \, d\mu_{\sigma^{k_j}\omega} \right|\\
&\quad\le e^{\gamma \max_{1 \le j \le m-1} N^*(\sigma^{k_j}\omega)^{\theta}}
\, e^{-\gamma \min_{0 \le j \le m-2}(k_{j+1}-k_j)^{\theta}}
\prod_{j=0}^{m-1} \|f_j\|_{\mathfrak{B}},
\end{aligned}
\end{equation*}
so that \eqref{asm:mem} holds with
\begin{equation*}
C(\omega) = e^{\gamma N^*(\omega)^{\theta}},
\quad
H_m(x_1,\dots,x_{m-1}) = \max_{1 \le j \le m-1} x_j.
\end{equation*}

\section{Examples}\label{explesec}

We present two examples that illustrate some aspects of Theorems \ref{cltthm} and \ref{pltthm}. The claims will be proved in Section \ref{expleproofsec}. The examples are chosen to highlight the role of the random threshold $N^*$.

In both examples, we will use a \textit{renewal chain} for the base transformation $\sigma$. Let $(p_k)_{k\geq 0}$ be a sequence of probabilities with $p_k>0$ and $\sum_{k=0}^{\infty} p_k = 1$. A Markov chain $(X_n)$ on $\N_0=\{0,1,\dots\}$ is called the renewal chain with probabilities $(p_k)$ if
\begin{equation*}
\P(X_{n+1}=k \mid X_n=j) =
\begin{cases}
p_k & \text{if } j=0,\\
1 & \text{if } j\geq 1 \text{ and } k=j-1,\\
0 & \text{otherwise}.
\end{cases}
\end{equation*}
If $\sum_{k=0}^{\infty} k p_k < \infty$, then this Markov chain is positively recurrent, and the stationary measure $\lambda$ is given by
\begin{equation*}
\lambda(j)= \frac{\sum_{k=j}^{\infty} p_k}{\sum_{k=0}^{\infty} k p_k}.
\end{equation*}
By a slight abuse of notation, we also denote by $\sigma$ the corresponding shift on $(\Omega=\N_0^{\N},\P)$ and refer to it as the renewal chain with probabilities $(p_k)$. Each $\omega\in \Omega$ corresponds to a realised path $(X_n)$ of the Markov chain.

In \cite[Appendix A]{DHS23} (see also \cite[Appendix A]{buzzi99}), an example was constructed where the quenched CLT fails, even though the fibre maps are expanding. In our first example, we will discuss this in the context of mixing of all orders, and our Theorem \ref{cltthm}. Furthermore, we demonstrated how their example, in a certain sense, stands at the edge of not satisfying the quenched CLT.

\begin{example}\label{renewalexple}
Let $\sigma$ be the renewal chain with $p_k = \frac{c}{(1+k)^3}$, where $c>0$ is a constant so that $\sum_{k=0}^{\infty} p_k = 1$, and let the maps $T_{\omega}:[0,1)\rightarrow [0,1)$ be given by 
\begin{equation*}
T_{\omega}(x) := \begin{cases}
2x \; \text{(mod 1)} & \textit{if } \omega_0=0\\
x & \textit{otherwise}.
\end{cases}
\end{equation*}
In \cite[Appendix A]{DHS23} it was shown that the asymptotic variance $\sigma_{\mathbf{f}}$ does not converge, for suitably chosen $\mathbf{f}$, hence the quenched CLT does not hold. 

However, if we instead set $p_k = \frac{c}{(1+k)^{3+\epsilon}}$, for some $\epsilon>0$, then the system $(T_{\omega})$ is quenched stretched exponentially mixing of all orders, and the random threshold $N^*$ is integrable. By Theorem \ref{cltthm}, it satisfies the quenched CLT.
\end{example}

\begin{remark}\label{renewalrem}
The system $(T_{\omega})$ is, in fact, quenched exponentially mixing of all orders for every choice of $(p_k)$ satisfying $\sum_{k=0}^{\infty} k p_k < \infty$. This follows from the fact that the doubling map $T_0$ is applied along a set of times of asymptotic density $p_0$, by the strong law of large numbers.

However, in general, the associated random threshold $N^*$ is not integrable, and therefore the main theorem does not apply in this regime, and we have to move to the stretched exponential regime.

If instead $p_k \asymp k^{-4-\epsilon}$ for some $\epsilon>0$, then the random threshold $N^*$ in the exponential regime is integrable.
\end{remark}

Now we consider the same example in view of the PLT. By Remark \ref{renewalrem}, it is quenched exponentially mixing of all orders. In contrast to Theorem \ref{cltthm} for the CLT, where we required the threshold $N^*$ to be integrable, Theorem \ref{pltthm} imposes no such condition.

However, Example \ref{renewalexple} does not satisfy the PLT, since the aperiodicity condition (APER) fails. Indeed, after every return to a small ball $B_r(x^*)$, there is a positive probability of remaining in the ball, namely when the next transformation is $ID$. As a consequence, the limiting law is compound Poisson, similarly to \cite{AFGTV25} (see also \cite[Theorem 3.3]{Z22rare} in the deterministic setting).

Although the arguments in these references do not directly apply here, the simple structure of the example allows one to analyse the limit law using only properties of the doubling map. Since this is not essential for our purposes, we omit the proof.

We can, however, modify Example \ref{renewalexple} so that (APER) is satisfied by replacing $ID$ with a non-periodic transformation. In this way, we obtain an example that satisfies the quenched PLT but not the CLT, highlighting the role of the random threshold $N^*$. We use a slightly different example, based on applying an Anosov flow at different times, to highlight a spectral approach.

\begin{example}\label{pltexple}
Let $\sigma$ be the renewal chain with $p_k = \frac{c}{(1+k)^3}$, where $c>0$ is chosen so that $\sum_{k=0}^{\infty} p_k = 1$. Let $\phi$ be a $C^4$ contact Anosov flow (as defined in \cite{Lcontact}) preserving a smooth measure, and let $(t_k)_{k\geq 1}$ be a sequence with $\sum_{k\geq 1} t_k = 1$. We define the fibre maps by
\begin{equation*}
T_{\omega} :=
\begin{cases}
\phi_1 & \text{if } \omega_0=0,\\
\phi_{t_k} & \text{if } \omega_0=k\geq 1.
\end{cases}
\end{equation*}
Then the system $(T_{\omega})$ satisfies the quenched PLT, but not the quenched CLT.

If instead $p_k = \frac{c}{(1+k)^{3+\epsilon}}$ for some $\epsilon>0$, then the system satisfies both the quenched PLT and the quenched CLT.
\end{example}

This example illustrates that the integrability of the random threshold $N^*$ is essential for the CLT, but not for the PLT.

\part{Proofs}

\section{Proof of the CLT}\label{cltproofsec}

\begin{lemma}\label{intsumlem}
Let $\phi\in L^1(\P)$ and $\Lambda_N\rightarrow \infty$ be a sequence of positive real numbers. Then, for $\P$-almost every $\omega$ it holds that
\begin{equation*}
\lim_{N\rightarrow\infty} \frac{1}{N} \sum_{n=0}^{N-1} \phi(\sigma^n \omega) \cdot \one_{|\phi(\sigma^n \omega)|> \Lambda_N} = 0.
\end{equation*}
\end{lemma}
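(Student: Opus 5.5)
The plan is a truncation argument combined with Birkhoff's ergodic theorem, exploiting the monotonicity of the truncation level. Replacing $\phi$ by $|\phi|$ only makes the sum larger, since
\begin{equation*}
\left|\sum_{n=0}^{N-1} \phi(\sigma^n\omega)\one_{|\phi(\sigma^n\omega)|>\Lambda_N}\right| \leq \sum_{n=0}^{N-1} |\phi(\sigma^n\omega)|\one_{|\phi(\sigma^n\omega)|>\Lambda_N}.
\end{equation*}
So it suffices to treat $\phi\geq 0$. For a fixed level $L>0$ we set $\phi_L:=\phi\,\one_{\phi>L}$, which lies in $L^1(\P)$, and note that $\E(\phi_L)\to 0$ as $L\to\infty$ by dominated convergence.

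Next we fix $L\in\N$. Since $\Lambda_N\to\infty$, there is $N_0(L)$ such that $\Lambda_N\geq L$ for all $N\geq N_0(L)$. For such $N$ we have the pointwise inequality $\phi\,\one_{\phi>\Lambda_N}\leq \phi\,\one_{\phi>L}=\phi_L$, and hence
\begin{equation*}
\frac{1}{N}\sum_{n=0}^{N-1}\phi(\sigma^n\omega)\one_{\phi(\sigma^n\omega)>\Lambda_N}\leq \frac{1}{N}\sum_{n=0}^{N-1}\phi_L(\sigma^n\omega).
\end{equation*}
Because $\sigma$ is ergodic and preserves $\P$, Birkhoff's ergodic theorem says the right-hand side converges to $\E(\phi_L)$ for $\omega$ outside a null set $E_L$. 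It follows that for $\omega\notin E_L$,
\begin{equation*}
\limsup_{N\to\infty}\frac{1}{N}\sum_{n=0}^{N-1}\phi(\sigma^n\omega)\one_{\phi(\sigma^n\omega)>\Lambda_N}\leq \E(\phi_L).
\end{equation*}

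To finish, we intersect over countably many levels. On the full-measure set $\Omega\setminus\bigcup_{L\in\N}E_L$ the $\limsup$ is at most $\E(\phi_L)$ for every $L\in\N$. Letting $L\to\infty$ gives a $\limsup$ of $0$, and the quantity is nonnegative, so the limit exists and equals $0$.

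The only delicate point is that the truncation level $\Lambda_N$ varies with $N$, so Birkhoff's theorem cannot be applied to it directly. The difficulty is handled by dominating with a fixed level $L$ and restricting to countably many $L$, which keeps the exceptional set null. No rate assumption on $\Lambda_N$ is needed.
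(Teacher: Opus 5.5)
Your proof is correct and follows the same route as the paper: once $\Lambda_N\geq L$, dominate $\phi\,\one_{|\phi|>\Lambda_N}$ by the fixed truncation $\phi\,\one_{|\phi|>L}$, apply Birkhoff's theorem, and let $L\to\infty$. Your version is slightly more careful than the paper's, since you pass to $|\phi|$ explicitly and restrict to countably many levels $L\in\N$, so the exceptional null set does not depend on $\epsilon$.
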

\begin{proof}
Let $\epsilon>0$, then there is a $L>0$ such that
\begin{equation*}
\E(\phi \cdot \one_{|\phi|> L}) <\epsilon.
\end{equation*}
Let $N_{\epsilon}\geq 1$ be such that $\Lambda_N>L$ for $N>N_{\epsilon}$, and
\begin{equation*}
\frac{1}{N} \sum_{n=0}^{N-1} \phi(\sigma^n \omega) \cdot \one_{|\phi(\sigma^n \omega)|> L} < 2 \E(\phi \cdot \one_{|\phi|> L}) \quad \forall N>N_{\epsilon}.
\end{equation*}
Then it holds that
\begin{equation*}
\frac{1}{N} \sum_{n=0}^{N-1} \phi(\sigma^n \omega) \cdot \one_{|\phi(\sigma^n \omega)|> \Lambda_N} \leq \frac{1}{N} \sum_{n=0}^{N-1} \phi(\sigma^n \omega) \cdot \one_{|\phi(\sigma^n \omega)|> L} < 2 \epsilon,
\end{equation*}
for $N>N_{\epsilon}$.
\end{proof}

\begin{lemma}[Existence of the asympotic variance]\label{sigmaexlem}
Under the assumptions of Theorem \ref{cltthm}, both of the limits in \eqref{sigmadef} exist, and it holds that
\begin{equation*}
\lim_{N\rightarrow \infty} \frac{1}{N} \int_M (S_{N, \omega}(\mathbf{f}))^2 \d\mu_{\omega} = \E\left( \int_M f_{\omega}^2 \d\mu_{\omega} + 2\sum_{n=1}^{\infty} \int_M f_{\omega} \cdot f_{\sigma^n\omega} \circ T_{\omega}^n \d\mu_{\omega} \right).
\end{equation*}
\end{lemma}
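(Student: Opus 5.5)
The plan is to expand the square and reduce everything to ergodic averages along the orbit of $\omega$. Write $c_n(\omega) := \int_M f_{\omega}\cdot f_{\sigma^n\omega}\circ T_{\omega}^n \d\mu_{\omega}$. Using the invariance relation \eqref{fibrerel}, iterated, we have $\int_M f_{\sigma^i\omega}\circ T^i_{\omega}\cdot f_{\sigma^j\omega}\circ T^j_{\omega}\d\mu_{\omega} = c_{j-i}(\sigma^i\omega)$ for $i\le j$. Hence
\begin{equation*}
\frac1N\int_M (S_{N,\omega}(\mathbf f))^2\d\mu_\omega = \frac1N\sum_{i=0}^{N-1} c_0(\sigma^i\omega) + \frac2N\sum_{i=0}^{N-1}\sum_{n=1}^{N-1-i} c_n(\sigma^i\omega).
\end{equation*}
The key input is a summable, threshold-dependent bound on $c_n$. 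Apply \eqref{asm:qmem} with $m=2$, $k_0=0$, $k_1=n$. The $f_j$ are centred, and $\|f_{\omega}\|_{\mathfrak B}\le F:=\sup_\omega \|f_\omega\|_{\mathfrak B}+\|f_\omega\|_{L^\infty}$. Therefore
\begin{equation*}
|c_n(\omega)| \le F^2 e^{-\gamma n^\theta} \ \text{ if } n\ge N^*(\omega), \qquad |c_n(\omega)|\le F^2 \ \text{ always}.
\end{equation*}
This gives the domination $\sum_{n\ge1}|c_n(\omega)| \le F^2\big(N^*(\omega) + \sum_{n\ge1}e^{-\gamma n^\theta}\big) =: G(\omega)$, with $G\in L^1(\P)$ since $N^*\in L^1$. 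Consequently $h(\omega):=c_0(\omega)+2\sum_{n\ge1}c_n(\omega)$ is an $L^1(\P)$ function, which shows that the right-hand limit in \eqref{sigmadef} exists as $\E(h)$.

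For the left-hand side, I would compare it with the Birkhoff average $\frac1N\sum_{i<N}h(\sigma^i\omega)$, which tends to $\E(h)$ a.s.\ by the ergodic theorem. The difference is $\frac2N\sum_{i<N} R_{N-1-i}(\sigma^i\omega)$, where $R_L(\omega):=\sum_{n>L}c_n(\omega)$ denotes the tail. To bound the tail, note that for $L\ge N^*(\omega)$ we have $|R_L(\omega)|\le F^2\sum_{n>L}e^{-\gamma n^\theta}=:\epsilon_L\to0$, while in general $|R_L(\omega)|\le G(\omega)$.

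The main obstacle is to control $\frac1N\sum_{i<N}|R_{N-1-i}(\sigma^i\omega)|$, because the indices with $i$ close to $N$ have short tails cut off. Fix $\delta>0$ and split the indices. For $i\le(1-\delta)N$ with $N^*(\sigma^i\omega)\le \delta N$, the contribution is at most $\epsilon_{\delta N -1}\to0$. For $i\le (1-\delta)N$ with $N^*(\sigma^i\omega)>\delta N$, the contribution is at most $\frac1N\sum_{i<N}G(\sigma^i\omega)\one_{N^*(\sigma^i\omega)>\delta N}$. Since $G\le F^2N^* + C$, this is bounded by Lemma \ref{intsumlem}, applied with $\phi=N^*$ and $\Lambda_N=\delta N$, plus $C\cdot\frac1N\#\{i<N: N^*(\sigma^i\omega)>\delta N\}$. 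The latter is also $o(1)$, since it is bounded by $\frac1{\delta N}\cdot\frac1N\sum_{i<N} N^*(\sigma^i\omega)\one_{\{\dots\}}$. Finally, the indices $i>(1-\delta)N$ contribute at most $\frac1N\sum_{(1-\delta)N<i<N}G(\sigma^i\omega)$. By the ergodic theorem this equals the difference of two Birkhoff averages and tends to $\delta\E(G)$.

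Letting $N\to\infty$ and then $\delta\to0$ shows that the difference vanishes for $\P$-a.e.\ $\omega$. This yields the existence of the left-hand limit and its equality with $\E(h)$, for $\omega$ in the full-measure set where the ergodic theorem holds for $h$, $G$, and the conclusion of Lemma \ref{intsumlem} holds for $N^*$ (taking $\delta$ rational).
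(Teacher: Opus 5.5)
Your proposal is correct and follows essentially the same route as the paper. In both arguments you expand the square via equivariance, bound each correlation by $F^2$ below the threshold and by $F^2 e^{-\gamma n^\theta}$ above it to obtain an integrable dominating function (controlled by $N^*$), and compare with the Birkhoff average of the full correlation series, handling the truncated tails with Lemma \ref{intsumlem}. The only difference is that the paper cuts at $\sqrt{N}$, both for the final window of indices and for the size of $N^*$, so it needs no final limit $\delta\to 0$; your fixed-proportion cut at $\delta N$ requires that extra step but is otherwise equally valid and somewhat more carefully written.
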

\begin{proof}
The claim follows once we show that 
\begin{itemize}
\item[(A)] $\frac{1}{N} \int_M (S_{N, \omega}(\mathbf{f}))^2 \d\mu_{\omega}$ converges almost surely as $N\rightarrow \infty$,
\item[(B)] for almost every $\omega$, the partial sums $\sum_{n=1}^{N} \int_M f_{\omega} \cdot f_{\sigma^n \omega} \circ T_{\omega}^n \d\mu_{\omega}$ converge as $N\rightarrow \infty$,
\item[(C)] the infinite sum $\sum_{n=1}^{\infty} \int_M f_{\omega} \cdot f_{\sigma^n \omega} \circ T_{\omega}^n \d\mu_{\omega}$ is in $L^1(\P)$,
\item[(D)] for almost all $\omega$ it holds that 
\begin{equation*}
\lim_{N\rightarrow \infty} \frac{1}{N} \int_M (S_{N, \omega}(\mathbf{f}))^2 \d\mu_{\omega} = \E\left( \int_M f_{\omega}^2 \d\mu_{\omega} + 2\sum_{n=1}^{\infty} \int_M f_{\omega} \cdot f_{\sigma^n \omega} \circ T_{\omega}^n \d\mu_{\omega} \right).
\end{equation*}
\end{itemize}

Using quenched mixing \eqref{asm:qmem}, for each $N\geq 1$, it holds that
\begin{equation}\label{sumn*}
\left| \sum_{n=1}^{N} \int_M f_{\omega} \cdot f_{\sigma^n \omega} \circ T_{\omega}^n \d\mu_{\omega} \right| \leq N^*(\omega) + K,
\end{equation}
and for $N'>N$ it holds that
\begin{equation}\label{sumgn*}
\left| \sum_{n=N}^{N'} \int_M f_{\omega} \cdot f_{\sigma^n} \circ T_{\omega}^n \d\mu_{\omega} \right| \leq Ke^{-\frac{\gamma}{2} N^{\theta}} + \max(N^*(\omega) - N, 0),
\end{equation}
for a big enough constant $K>0$. Then assertions (B) and (C) follow immediately by dominated convergence.

For each $N$ we expand
\begin{align*}
\int_M (S_{N, \omega}(\mathbf{f}))^2 \d\mu_{\omega} & = \sum_{n,n' \in \{0,...,N-1\}} \int_M f_{\sigma^n \omega} \circ T_{\omega}^n \cdot f_{\sigma^{n'} \omega} \circ T_{\omega}^{n'} \d\mu_{\omega}\\
&= \sum_{n=0}^{N-1} \int_M f_{\sigma^n \omega}^2 \d\mu_{\sigma^n \omega} + 2 \sum_{n=0}^{N-2} \sum_{k=1}^{N-n-1} \int_M f_{\sigma^n \omega} \cdot f_{\sigma^{n+k} \omega} \circ T_{\omega}^{k} \d\mu_{\sigma^n \omega}
\end{align*}
By ergodicity, the first summand is asymptotically equivalent to $N\cdot \E\left( \int_M f_{\omega}^2 \d\mu_{\omega}\right)$. Using \eqref{sumn*} and \eqref{sumgn*} it holds that
\begin{align*}
&\left| \sum_{n=0}^{N-2} \sum_{k=1}^{N-n-1} \int_M f_{\sigma^n \omega} \cdot f_{\sigma^{n+k} \omega} \circ T_{\omega}^{k} \d\mu_{\sigma^n \omega} - \sum_{n=0}^{N-1} \sum_{k=1}^{\infty} \int_M f_{\sigma^n \omega} \cdot f_{\sigma^{n+k} \omega} \circ T_{\omega}^{k} \d\mu_{\sigma^n \omega} \right|\\
&\ll \left| \sum_{n=0}^{N-\sqrt{N}} \sum_{k=1}^{N-n-1} \int_M f_{\sigma^n \omega} \cdot f_{\sigma^{n+k} \omega} \circ T_{\omega}^{k} \d\mu_{\sigma^n \omega} - \sum_{n=0}^{N-\sqrt{N}} \sum_{k=1}^{\infty} \int_M f_{\sigma^n \omega} \cdot f_{\sigma^{n+k} \omega} \circ T_{\omega}^{k} \d\mu_{\sigma^n \omega} \right|\\
&+ \left| \sum_{n=N-\sqrt{N} + 1}^{N} \sum_{k=1}^{\infty} \int_M f_{\sigma^n \omega} \cdot f_{\sigma^{n+k} \omega} \circ T_{\omega}^{k} \d\mu_{\sigma^n \omega} \right|\\
&\ll \sum_{n=0}^{N-\sqrt{N}} \tilde{N}^* (\sigma^n \omega) + N e^{-\frac{\gamma}{2} N^{\frac{\theta}{2}}} + \sum_{n=N-\sqrt{N} + 1}^{N} N^*(\sigma^n \omega) + \sqrt{N}\\
&=o(N),
\end{align*}
where $\tilde{N}^*(\omega)=N^*(\omega) \cdot \one_{N^*(\omega)>\sqrt{N}}$. Ergodicity of $\sigma$ then yields that 
\begin{equation*}
\sum_{n=0}^{N-2} \sum_{k=1}^{N-n-1} \int_M f_{\sigma^n \omega} \cdot f_{\sigma^{n+k} \omega} \circ T_{\omega}^{k} \d\mu_{\sigma^n \omega} = N \cdot \E\left( \sum_{k=1}^{\infty} \int_M f_{\omega} \cdot f_{\sigma^k \omega} \circ T_{\omega}^k \d\mu_{\omega}\right) + o(N),
\end{equation*}
and (A) and (D) follow.
\end{proof}

\begin{proof}[Proof of Theorem \ref{cltthm}]
Wlog assume $\sup_{\omega \in \Omega} \|f_{\omega}\|_{\mathfrak{B}} + \|f_{\omega}\|_{L^{\infty}}=1$.

(i) The asymptotic variance $\sigma_{\mathbf{f}}$ exists by Lemma \ref{sigmaexlem}. Utilising the method of moments, convergence in \eqref{cltcon} will follow once we show that
\begin{equation}\label{momentcltclaim}
\lim_{N \rightarrow\infty} \frac{1}{N^{\frac{m}{2}}} \int_M (S_{N, \omega}(\mathbf{f}))^m \d\mu_{\omega} = \begin{cases}
0 & \text{if } m \text{ is odd}\\
\sigma_f^m (m-1)!! & \text{if } m \text{ is even},
\end{cases} 
\end{equation}
for all $m\geq 1$ and for $\P$-a.e.\ $\omega$. For $m=1$ we have $\int_M S_{N, \omega}(\mathbf{f}) \d\mu_{\omega} = 0$ by definition, and by Lemma \ref{sigmaexlem} it holds $\lim_{N\rightarrow \infty} \frac{1}{N} \int_M (S_{N, \omega}(\mathbf{f}))^2 \d\mu_{\omega} = \sigma_{\mathbf{f}}^2$. So in the following, we will focus on $m\geq 3$.

Expanding $(S_{N, \omega}(\mathbf{f}))^m$ we obtain 
\begin{equation*}
(S_{N, \omega}(\mathbf{f}))^m = \sum_{\textbf{k}=(k_1,\dots,k_{m}) \in \{0,\dots,N-1\}^m} \prod_{j=1}^{m} f_{\sigma^{k_j} \omega} \circ T_{\omega}^{k_j}.
\end{equation*}
From now on, to simplify notation, we will omit the dependence on $N$ from our notation. 

The rest of the proof is organised in steps (ii)--(v). Due to the random threshold $N^*$, we cannot apply quenched mixing to all well-separated multi-indices $\mathbf{k}$, as one would in the deterministic setting. To overcome this difficulty, we introduce the notion of bad pairs in Step (ii). In Step (iii), we treat the case where no bad pairs are present; this part is analogous to the deterministic argument.

In Step (iv), we reduce the $m$-th moment to the $(m-1)$-st one, at the cost of replacing the sum over $\{0,\dots, N-1\}$ by a sum over arbitrary subsets $A \subset \{0,\dots, N-1\}$. Finally, in Step (v) we estimate the resulting $(m-1)$-st moment, where $S_{N,\omega}(\mathbf{f})$ is replaced by the corresponding sum over an arbitrary subset $A$.

An induction argument then yields control of moments of all orders.

(ii) The idea is that we want to use mixing whenever $|k_j-k_{j'}|$ is "big" for some $(j,j')$, and simply group the functions as one whenever $|k_j-k_{j'}|$ is "small". However, for example if $k_j>k_{j'}$ and we group together
\begin{equation*}
f_{\sigma^{k_j} \omega} \circ T_{\omega}^{k_j} \cdot f_{\sigma^{k_{j'}} \omega} \circ T_{\omega}^{k_{j'}} = \left( f_{\sigma^{k_j} \omega} \circ T_{\omega}^{k_j-k_{j'}} \cdot f_{\sigma^{k_{j'}} \omega} \right) \circ T_{\omega}^{k_{j'}}
\end{equation*}
the $\mathfrak{B}$ norm of the function $f_{\sigma^{k_j} \omega} \circ T_{\omega}^{k_j-k_{j'}} \cdot f_{\sigma^{k_{j'}}}$ is of order $K^{k_j-k_{j'}}$, where the constant
\begin{equation*}
K=\sup_{g\in\mathfrak{B}, \|g\|_{\mathfrak{B}}=1, \omega\in\Omega} \|g\circ T_{\omega} \|_{\mathfrak{B}}
\end{equation*}
is finite because of assumption (R). When applying mixing, we have to compensate for this gain by only separating pairs for sufficiently large gaps $|k_j-k_{j'}|$. Accordingly we define $G_1 , \dots, G_{m^2+1} $ recursively by choosing a sequence $s(N)\nearrow \infty$ and setting
\begin{equation*}
G_1=\frac{1}{\gamma} (\log N)^{\frac{100m}{\theta}} \quad \text{and} \quad G_{l+1}=\log N \cdot G_l^{\frac{1}{\theta}} \;\forall l\geq 1.
\end{equation*}

Let $k_0=0$. Notice that, for every $\textbf{k}=(k_1,..., k_m)$, by a straightforward counting argument, there is a minimal $l(\textbf{k})\in\{1,\dots, {{m\choose 2}+m+2}\}$ such that there is no pair $(j,j')\in \{0,\dots m\}$ such that
\begin{equation*}
|k_j-k_{j'}|\in [G_{l(\textbf{k})}, G_{l(\textbf{k}) +1} ),
\end{equation*}
or in other words, for every pair $(j,j')\in \{0,\dots m\}$, it holds that
\begin{equation*}
|k_j-k_{j'}| < G_{l(\textbf{k})} \quad \text{or} \quad |k_j-k_{j'}| \geq G_{l(\textbf{k})+1}.
\end{equation*}
We call a collection $\Gamma\subset \{1,\dots,m\}$ a \textit{cluster} for $\textbf{k}$ if it is a maximal set of indices with
\begin{equation*}
|k_j-k_{j'}| < G_{l(\textbf{k})} \quad \forall j,j'\in \Gamma.
\end{equation*}
Denote by $\C(\textbf{k})$ the collection of all clusters for $\textbf{k}$ and by $s(\textbf{k})=\# \C(\textbf{k})$ the number of distinct clusters. For each $\textbf{k}$ we will split up $\int_M \prod_{j=1}^{m} f_{\sigma^{k_j} \omega} \circ T_{\omega}^{k_j} \d\mu_{\omega}$ into
\begin{equation}\label{clustersplit}
\prod_{\Gamma\in\C(\textbf{k})} \int_M \prod_{j\in \Gamma} f_{\sigma^{k_j} \omega} \circ T_{\omega}^{k_j} \d\mu_{\omega},
\end{equation}
using mixing of all orders. 

However, there is an issue; according to \eqref{asm:qmem} we cannot apply mixing of all orders if, for some $(j,j')$ with $k_{j}< k_{j'}$, it holds that $|k_{j'}-k_{j}|<N^*(\sigma^{k_{j}} \omega)$. Note that this is not a problem if $N^*(\sigma^{k_{j}} \omega) < G_1 = (\log N)^{\frac{100m}{\theta}}$, since in this case we will never separate $k_j$ and $k_{j'}$ anyway. Accordingly, we will say that $(k,k')$ with $0\leq k < k' \leq N-1$ is a \textit{bad pair} if
\begin{equation*}
N^*(\sigma^{k}\omega) > (\log N)^{\frac{100m}{\theta}} \quad \text{and} \quad |k'-k|<N^*(\sigma^{k'}\omega).
\end{equation*}
For a multi-index $\textbf{k}\in \{0,\dots, N-1\}^m$ we will say that $(j,j')\in \{1,\dots,m\}^2$ is a \textit{bad pair for \textbf{k}} if $k_j<k_{j'}$ and $(k_j,k_{j'})$ is a bad pair. Note that the order is important; if $(j,j')$ is a bad pair, then $(j',j)$ is not a bad pair.

Let $\B=\{l\leq N-1 \;|\; N^*(\sigma^l \omega) > (\log N)^{\frac{100m}{\theta}} \}$, then we define the \textit{danger zone} $\D$ as
\begin{equation*}
\D = \{0,\dots N-1\} \cap \bigcup_{l\in \B} \{l,\dots l+N^*(\sigma^l \omega) - 1\}.
\end{equation*}
Then $\D$ is exactly the set of those $l$ such that there exists an $l'$ where either $(l,l')$ or $(l',l)$ is a bad pair. We compute
\begin{equation*}
\# \D\leq \sum_{n=0}^{n-1} N^*(\sigma^n \omega) \cdot \one_{ N^*(\sigma^n \omega) > (\log N)^{\frac{100m}{\theta}}} = o(N).
\end{equation*} 
At the same time, denote by $BP$ the set of all possible bad pairs, then it also holds that
\begin{align*}
\# BP&=\# \{(k,k') \;|\; (k,k') \; \text{is a bad pair}\}\\ 
&\leq \# \{(k,k') \;|\; (k,k') \; N^*(\sigma^{k}\omega) > (\log N)^{\frac{100m}{\theta}} \; \text{and} \; k'\in \{k+1,\dots, k+N^*(\sigma^l \omega) - 1\}\}\\
&\leq \sum_{n=0}^{n-1} N^*(\sigma^n \omega) \cdot \one_{ N^*(\sigma^n \omega) > (\log N)^{\frac{100m}{\theta}}}\\
&=o(N).
\end{align*}

(iii) Now, if \textbf{k} is such that there are no bad pairs $(j,j')$, then we can simply use mixing and split up in the individual clusters as in \eqref{clustersplit}. Therefore define
\begin{align*}
\Delta_{good}^{(m)} &= \{\textbf{k}\in \{0,\dots, N-1\}^m \;|\; \forall (j,j')\in \{1,...,m\}^2 \; \text{ it holds that } \; (k_j,k_{j'}) \not\in BP \} ,\\
\Delta_{bad}^{(m)}& = \{\textbf{k}\in \{0,\dots, N-1\}^m \;|\; \exists (j,j')\in \{1,...,m\}^2 \; \text{ such that } \; (k_j,k_{j'}) \in BP \} ,
\end{align*}
then we can split up
\begin{equation*}
(S_{N, \omega}(\mathbf{f}))^m= \sum_{\textbf{k} \in \Delta^{(m)}_{good}} \prod_{j=1}^{m} f_{\sigma^{k_j} \omega} \circ T_{\omega}^{k_j} + \sum_{\textbf{k} \in \Delta^{(m)}_{bad}} \prod_{j=1}^{m} f_{\sigma^{k_j} \omega} \circ T_{\omega}^{k_j}.
\end{equation*}
Therefore, the claim \eqref{momentcltclaim} will follow once we show
\begin{equation}\tag{$\text{Good}_m$}
\lim_{N \rightarrow\infty} \frac{1}{N^{\frac{m}{2}}} \int_M \sum_{\textbf{k} \in \Delta^{(m)}_{good}} \prod_{j=1}^{m} f_{\sigma^{k_j} \omega} \circ T_{\omega}^{k_j} \d\mu_{\omega} = \begin{cases}
0 & \text{if } m \text{ is odd}\\
\sigma_{\mathbf{f}}^m (m-1)!! & \text{if } m \text{ is even},
\end{cases} 
\end{equation}
whereas the contribution of the bad terms is negligible
\begin{equation}\tag{$\text{Bad}_m$}
\left|\sum_{\textbf{k} \in \Delta^{(m)}_{bad}} \int_M \prod_{j=1}^{m} f_{\sigma^{k_j} \omega} \circ T_{\omega}^{k_j} \d\mu_{\omega}\right| = o(N^{\frac{m}{2}}).
\end{equation}

We will first show ($\text{Good}_m$) for all $m$, which is easier anyway since we can simply use mixing. 

For $\textbf{k}\in \{0,\dots, N-1\}^m$ and a cluster $\Gamma\in \C(\mathbf{k})$, denote $j^*(\Gamma)\in \Gamma$ as the index such that $k_{j^*(\Gamma)}=\min_{j\in \Gamma} k_j$, Then we can write 
\begin{equation*}
\prod_{j\in \Gamma} f_{\sigma^{k_j} \omega} \circ T_{\omega}^{k_j} = \left( \prod_{j\in \Gamma} f_{\sigma^{k_j} \omega} \circ T_{\sigma^{k_{j^*(\Gamma)}} \omega}^{k_j - k_{j^*(\Gamma)}} \right)\circ T_{\omega}^{k_{j^*(\Gamma)}}.
\end{equation*}
The $\mathfrak{B}$ norm of this function is
\begin{equation*}
\left| \left| \prod_{j\in \Gamma} f_{\sigma^{k_j} \omega} \circ T_{\sigma^{k_{j^*(\Gamma)}} \omega}^{k_j - k_{j^*(\Gamma)}} \right| \right|_{\mathfrak{B}} \ll K^{\#\Gamma \cdot G_{l(\textbf{k})}}.
\end{equation*}
Since $|k_{j^*(\Gamma)} - k_{j^*(\Gamma')}|\geq G_{l(\textbf{k})+1}$, we can use mixing of all orders to find that
\begin{equation}\label{cltsplitindividual}
\begin{aligned}
&\left| \int_M \prod_{j=1}^{m} f_{\sigma^{k_j} \omega} \circ T_{\omega}^{k_j} \d\mu_{\omega} - \prod_{\Gamma\in\C(\textbf{k})} \int_M \prod_{j\in \Gamma} f_{\sigma^{k_j} \omega} \circ T_{\omega}^{k_j} \d\mu_{\omega}\right| \\
&\ll K^{m G_{l(\textbf{k})}} e^{-\gamma G_{l(\textbf{k})+1}^{\theta}} \ll N^{-99m}.
\end{aligned}
\end{equation}

Summing this bound over $\textbf{k}\in\{1,\dots,m\}$ yields 
\begin{equation}\label{cltsplit}
\left| \int_M \left( (S_{N, \omega}(\mathbf{f}))^m - \sum_{\textbf{k}=(k_1,\dots,k_{m}) \in \{0,\dots,N-1\}^m} \prod_{\Gamma\in\C(\textbf{k})} \int_M \prod_{j\in \Gamma} f_{\sigma^{k_j} \omega} \circ T_{\omega}^{k_j} \right) \d\mu_{\omega}\right| \ll N^{-98m}.
\end{equation}

For $s=1,\dots,m$ denote by 
\begin{equation*}
\Delta^s_{good} = \{\textbf{k} \in \Delta_{good} \;|\; s(\textbf{k})=s\}
\end{equation*}
the set of all $\textbf{k}$ having exactly $s$ clusters.

If $s>\frac{m}{2}$ and $\textbf{k}\in \Delta^s_{good}$, then one of the clusters is necessarily a singleton. Since we defined $f_{\omega} = f - \int_M f \d\mu_{\omega}$, it follows that
\begin{equation}\label{manycluster}
\prod_{\Gamma\in\C(\textbf{k})} \int_M \prod_{j\in \Gamma} f_{\sigma^{k_j} \omega} \circ T_{\omega}^{k_j} \d\mu_{\omega} = 0.
\end{equation}

On the other hand, if $s<\frac{m}{2}$, then it holds that 
\begin{equation*}
\# \Delta^s_{good} \ll N^s G_{m^2}^{m-s} \ll N^{\frac{m}{2} -\frac{1}{2} +\frac{1}{100}}.
\end{equation*}
Since $f$ is bounded, we therefore have
\begin{equation}\label{fewcluster}
\left| \sum_{\textbf{k}\in \Delta_{good}^s} \prod_{\Gamma\in\C(\textbf{k})} \int_M \prod_{j\in \Gamma} f_{\sigma^{k_j} \omega} \circ T_{\omega}^{k_j} \d\mu_{\omega}\right| \ll \# \Delta_{good}^s \ll N^{\frac{m}{2} -\frac{1}{2} +\frac{1}{100}}.
\end{equation}

Therefore all terms coming $\Delta_{good}^s$ with $s\neq \frac{m}{2}$ are negligible by \eqref{manycluster} and \eqref{fewcluster}. In particular, together with \eqref{cltsplit} this shows ($\text{Good}_m$) if $m$ is odd.

For the rest of the proof of ($\text{Good}_m$), we focus on the case where $m$ is even and $s=\frac{m}{2}$. 

If one of the clusters $\Gamma\in \C(\textbf{k})$ is a singleton, then as before it holds that
\begin{equation*}
\prod_{\Gamma\in\C(\textbf{k})} \int_M \prod_{j\in \Gamma} f_{\sigma^{k_j} \omega} \circ T_{\omega}^{k_j} \d\mu_{\omega} = 0.
\end{equation*}
Therefore, the only remaining case is when all of the clusters consist of exactly two elements. More explicitly, letting 
\begin{equation*}
\G_m=\{\textbf{k}\in \Delta^{\frac{m}{2}}_{good} \;|\; \#\Gamma=2 \; \forall \Gamma\in \C(\textbf{k})\},
\end{equation*}
then the claim ($\text{Good}_m$) will follow once we show
\begin{equation}\tag{$\text{Good'}_m$}
\lim_{N \rightarrow\infty} \frac{1}{N^{\frac{m}{2}}} \sum_{\textbf{k}\in \G_m} \prod_{\Gamma\in\C(\textbf{k})} \int_M \prod_{j\in \Gamma} f_{\sigma^{k_j} \omega} \circ T_{\omega}^{k_j} \d\mu_{\omega} = \sigma_{\mathbf{f}}^m (m-1)!!.
\end{equation}
We will show ($\text{Good'}_m$) by induction on $s=\frac{m}{2}$.

For $s=1$, first note that we might as well just add back in all the bad pairs that form a cluster, since
\begin{equation*}
\sum_{(k,k') \text{ is a cluster and a bad pair}} \int_M f_{\sigma^k\omega} \circ T_{\omega}^{k} \cdot f_{\sigma^{k'}\omega} \circ T_{\omega}^{k'} \ll \# BP = o(N).
\end{equation*}
Therefore, we may as well just sum over all pairs that form a cluster, regardless of whether it is a bad pair. Therefore, let
\begin{equation*}
\tilde{G}=\{(k,k') \;|\; s(k,k')=1\}
\end{equation*}
then ($\text{Good'}_2$) is equivalent to showing
\begin{equation*}
\lim_{N \rightarrow\infty} \frac{1}{N} \sum_{(k,k')\in \tilde{G}} \int_M f_{\sigma^{k} \omega} \circ T_{\omega}^{k} \cdot f_{\sigma^{k'} \omega} \circ T_{\omega}^{k'} \d\mu_{\omega} = \sigma_{\mathbf{f}}^2.
\end{equation*}
Rewriting, we obtain
\begin{align*}
&\sum_{(k,k')\in \tilde{G}} \int_M f_{\sigma^{k} \omega} \circ T_{\omega}^{k} \cdot f_{\sigma^{k'} \omega} \circ T_{\omega}^{k'} \d\mu_{\omega} = \sum_{n=0}^{N-1} \int_M f_{\sigma^n \omega}^2 \d\mu_{\sigma^n \omega}+ 2 \Sigma,
\end{align*}
where
\begin{equation}\label{sigmasum}
\begin{aligned}
\Sigma&=\sum_{l=1}^3 \sum_{n=G_{l-1}}^{G_l-1} \sum_{k=1}^{\min(G_l - 1, N - 1 - n)} \int_M f_{\sigma^n \omega} \cdot f_{\sigma^{n+k} \omega} \circ T_{\omega}^{k} \d\mu_{\sigma^n \omega} \\
& + \sum_{n=G_{3}}^{N-1} \sum_{k=1}^{\min(G_4 - 1, N - 1 - n)} \int_M f_{\sigma^n \omega} \cdot f_{\sigma^{n+k} \omega} \circ T_{\omega}^{k} \d\mu_{\sigma^n \omega}
\end{aligned}
\end{equation}
and $G_0=0$. This sum might look a bit complicated, but that is only because we had to take such great care of the exact size of the gaps. We will compare with the full sum 
\begin{equation}\label{fullsum}
\sum_{n=0}^{N-2} \sum_{k=1}^{N-n-1} \int_M f_{\sigma^n \omega} \cdot f_{\sigma^{n+k} \omega} \circ T_{\omega}^{k} \d\mu_{\sigma^n \omega}.
\end{equation}
Notice that 
\begin{itemize}
\item every summand in \eqref{sigmasum} appears only once and also appears in the full sum \eqref{fullsum},
\item all of the summands of the full sum \eqref{fullsum} that are missing from \eqref{sigmasum} necessarily have $k\geq G_1$.
\end{itemize}
Using \eqref{sumgn*}, it follows that 
\begin{align*}
&\left| \sum_{n=0}^{N-2} \sum_{k=1}^{N-n-1} \int_M f_{\sigma^n \omega} \cdot f_{\sigma^{n+k} \omega} \circ T_{\omega}^{k} \d\mu_{\sigma^n \omega} - \Sigma\right|\\
&\leq \sum_{n=0}^{N-2} \sum_{k=G_1}^{N-1} \left| \int_M f_{\sigma^n \omega} \cdot f_{\sigma^{n+k} \omega} \circ T_{\omega}^{k} \d\mu_{\sigma^n \omega} \right|\\
&\ll Ne^{- \gamma G_1^{\theta}} + \sum_{n=0}^{N-2} \max(N^*(\sigma^n \omega) - G_1,0)\\
& \ll N^{-98} + \sum_{n=0}^{N-1} N^*(\sigma^n \omega) \cdot \one_{N^*(\sigma^n \omega) > (\log N)^{\frac{100m}{\theta}}}\\
&=o(N)
\end{align*}
where in the last step we used Lemma \ref{intsumlem}. Using Lemma \ref{sigmaexlem}, it follows that 
\begin{equation*}
\sum_{(k,k')\in \tilde{G}} \int_M f_{\sigma^{k} \omega} \circ T_{\omega}^{k} \cdot f_{\sigma^{k'} \omega} \circ T_{\omega}^{k'} \d\mu_{\omega} = N \sigma_{\mathbf{f}}^2 + o(N),
\end{equation*}
and ($\text{Good}'_2$) holds.

Now suppose that ($\text{Good'}_m$) holds for some $\frac{m}{2}=s\geq 1$, we will show that it also holds for $s+1$. Let $\hat{\G}$ be the set of all $\textbf{k}\in \G_{m+2}$ where $k_1$ and $k_2$ are in the same cluster, and note that
\begin{equation}\label{ghateq}
\sum_{\textbf{k}\in \G_{m+2}} \prod_{\Gamma\in\C(\textbf{k})} \int_M \prod_{j\in \Gamma} f_{\sigma^{k_j} \omega} \circ T_{\omega}^{k_j} \d\mu_{\omega} = (m+1) \cdot \sum_{\textbf{k}\in \hat{\G}} \prod_{\Gamma\in\C(\textbf{k})} \int_M \prod_{j\in \Gamma} f_{\sigma^{k_j} \omega} \circ T_{\omega}^{k_j} \d\mu_{\omega}.
\end{equation}
This is because $k_1$ can be paired with any of $k_2,\dots, k_{m+2}$.

For $\textbf{k}'\in \G_m$ denote $\G(\textbf{k}')$ to be the set of all $\textbf{k}=(k_1,k_2,\textbf{k}')\in \hat{\G}$. It is not true that $\hat{\G} = \bigcup_{\textbf{k}'\in \G_m} \G(\textbf{k}')$ because this union does not into account any cluster among the last $m$ indices that is separated by more than $G_{{m\choose 2}+m+2}$, whereas in principle the maximum separation possible should be $G_{{m+2\choose 2}+m+4}$. However, using \eqref{asm:qmem} it is sraightforward to deduce that
\begin{align*}
&\left|\sum_{\textbf{k}\in \hat{\G}} \prod_{\Gamma\in\C(\textbf{k})} \int_M \prod_{j\in \Gamma} f_{\sigma^{k_j} \omega} \circ T_{\omega}^{k_j} \d\mu_{\omega} - \sum_{\textbf{k}'\in \G_m} \sum_{\textbf{k}\in \G(\textbf{k}')} \prod_{\Gamma\in\C(\textbf{k})} \int_M \prod_{j\in \Gamma} f_{\sigma^{k_j} \omega} \circ T_{\omega}^{k_j} \d\mu_{\omega}\right| \\
&\ll \#\hat{\G} e^{-\gamma G_{{m\choose 2}+m+2}^{\theta} } \ll N^m e^{-\gamma G_{{m\choose 2}+m+2}^{\theta} } \ll N^{-99m}.
\end{align*}
Notice that every summand of the second sum has a common factor, namely
\begin{equation*}
\prod_{\Gamma\in\C(\textbf{k}')} \int_M \prod_{j\in \Gamma} f_{\sigma^{k_j} \omega} \circ T_{\omega}^{k_j} \d\mu_{\omega}
\end{equation*}
and the remaining term is
\begin{equation}\label{k'sum}
\sum_{\textbf{k}\in \G(\textbf{k}')} \int_M f_{\sigma^{k_1} \omega} \circ T_{\omega}^{k_1} \cdot f_{\sigma^{k_2} \omega} \circ T_{\omega}^{k_2} \d\mu_{\omega}.
\end{equation}
In order to keep the notation readable, we will not write out this sum any more explicitly. Now, if we can show that
\begin{equation}\label{k'claim}
\begin{aligned}
&\sum_{\textbf{k}\in \G(\textbf{k}')} \int_M f_{\sigma^{k_1} \omega} \circ T_{\omega}^{k_1} \cdot f_{\sigma^{k_2} \omega} \circ T_{\omega}^{k_2} \d\mu_{\omega}\\
&=\sum_{(k,k') \in \Delta^2_{good}} \int_M f_{\sigma^k \omega} \circ T_{\omega}^{k} \cdot f_{\sigma^{k'} \omega} \circ T_{\omega}^{k'} \d\mu_{\omega} + o(N),
\end{aligned}
\end{equation}
for all $\textbf{k}'$, then we can use the induction hypotheses ($\text{Good'}_m$) and ($\text{Good'}_2$) to conclude
\begin{align*}
&\sum_{\textbf{k}\in\hat{\G}} \prod_{\Gamma\in\C(\textbf{k})} \int_M \prod_{j\in \Gamma} f_{\sigma^{k_j} \omega} \circ T_{\omega}^{k_j} \d\mu_{\omega} \\
& = \sum_{\textbf{k}'\in \G_m} \left(\left(\prod_{\Gamma\in\C(\textbf{k}')} \int_M \prod_{j\in \Gamma} f_{\sigma^{k_j} \omega} \circ T_{\omega}^{k_j} \d\mu_{\omega}\right) \cdot \left(\sum_{\textbf{k}\in \G(\textbf{k}')} \int_M f_{\sigma^{k_1} \omega} \circ T_{\omega}^{k_1} \cdot f_{\sigma^{k_2} \omega} \circ T_{\omega}^{k_2} \d\mu_{\omega}\right) \right)\\
& \quad + O(N^{-100})\\
&= \left(N^{\frac{m}{2}} \sigma_f^m (m-1)!! + o(N^{\frac{m}{2}})\right) \cdot \left( N \sigma_f^2 + o(N) \right) + O(N^{-100})\\
&= N^{\frac{m+2}{2}} \sigma_f^{m+2} (m-1)!! + o(N^{\frac{m}{2}}).
\end{align*} 
Using \eqref{ghateq}, the claim ($\text{Good}'_{m+2}$) follows.

To show \eqref{k'claim}, note that the summands that are in the full sum \eqref{fullsum} but not in \eqref{k'sum} have
\begin{itemize}
\item $k_1$ and $k_2$ are within distance $2 G_{{m+2\choose 2}+m+4}$ of one of the $k'_j$, 
\item $|k_1-k_2|>G_{{m+2\choose 2}+m+4}$, or
\item one or both of $k_1$ or $k_2$ form a bad pair with one of the $k'_j$.
\end{itemize}
In the first two cases, it is rather straightforward; Using a counting argument in the first case and mixing - keep in mind that $(k_1,k_2)$ (or $(k_2,k_1)$ if $k_1>k_2$) is not a bad pair - in the second case, we obtain that those cases contribute an error of order
\begin{equation*}
N e^{- \gamma G_{{m+2\choose 2}+m+4}^{\theta}} + G_{{m+2\choose 2}+m+4}^2 = o(N).
\end{equation*}
In the third case, notice that $k_1$ or $k_2$ are necessarily in the danger zone $\D$. Choose integers $L=L_N$ with $L\rightarrow \infty$ as $N\rightarrow \infty$ but $L \cdot \#\D =o(N)$. Splitting the third case up into the subcases when $|k_1-k_2|<L$ or $|k_1-k_2|\geq L$, we can once again apply a counting argument in the first subcase, or mixing in the second, to see that the contribution is only of order
\begin{equation*}
N e^{- \gamma L^{\theta}} + L\cdot \#\D = o(N).
\end{equation*}

Altogether we obtain
\begin{align*}
&\left|\sum_{\textbf{k}\in \G(\textbf{k}')} \int_M f_{\sigma^{k_1} \omega} \circ T_{\omega}^{k_1} \cdot f_{\sigma^{k_2} \omega} \circ T_{\omega}^{k_2} \d\mu_{\omega} - \sum_{n=0}^{N-2} \sum_{k=1}^{N-n-1} \int_M f_{\sigma^n \omega} \cdot f_{\sigma^{n+k} \omega} \circ T_{\omega}^{k} \d\mu_{\sigma^n \omega}\right|\\
& \ll N e^{- \gamma G_{{m+2\choose 2}+m+4}^{\theta}} + G_{{m+2\choose 2}+m+4}^2 + N e^{- \gamma L^{\theta}} + L\cdot \#D = o(N).
\end{align*}
This shows \eqref{k'claim}, and ($\text{Good'}_{m+2}$), and therefore ($\text{Good}_{m+2}$), follows. 

We have thus shown ($\text{Good}_{m}$) for all $m\geq 1$.

(iv) Note that, for $m=1$, it trivially holds that $\Delta_{bad}^{(1)}=\emptyset$, therefore ($\text{Bad}_1$) is satisfied. For $m=2$, we bound by the number of bad pairs, and it holds that
\begin{equation*}
\sup_{x\in M} \sum_{\textbf{k} \in \Delta^{(m)}_{bad}} \left| \prod_{j=1}^{m} f_{\sigma^{k_j} \omega} \circ T_{\omega}^{k_j}(x)\right| \ll \#BP =o(N),
\end{equation*}
so ($\text{Bad}_2$) also holds.

From now on, assume $m\geq 3$. For $p=1,\dots, {m \choose 2}$ denote
\begin{equation}
\Lambda_m^p = \left\{\mathbf(l)=((l_1,l'_1), \dots ,(l_p,l'_p)) \in \{1,\dots,m\}^{2p} \;\left|
\begin{aligned}
& (l_1,l'_1), \dots ,(l_p,l'_p), l_i\neq l'_i \forall i \\
& (l_i,l'_i)\neq (l_{i'},l'_{i'}) \; \forall i\neq i'
\end{aligned}
\right.\right\},
\end{equation}
as the set of all collections of $p$ distinct pairs. For $p\geq 1$ and $\mathbf{l}\in\Lambda_m^p$ denote the collection of all multi-indices $\mathbf{k}$ where those are bad pairs by
\begin{equation*}
\Delta_{\mathbf{l}}^{(m)} = \left\{\textbf{k} \in \Delta_{bad}^{(m)} \;|\;(k_{j_i},k_{j'_i})\in BP \; \forall i=1,\dots ,p \right\},
\end{equation*}
note that $\mathbf{k}\in \Delta_{\mathbf{l}}^{(m)}$ might also have other bad pairs. We do not exclude cases where some of the pairs are \textit{overlapping}, so it might be that there are $i\neq i'$ with $l_i=l_{i'}$ or $l_i=l'_{i'}$ or $l'_i=l'_{i'}$. We can restrict to the case when $p\leq {m\choose 2}$ since this is the maximum number of bad pairs that an index $\mathbf{k}$ can have - namely when all pairs are bad pairs. By an inclusion-exclusion argument, it holds that
\begin{equation}\label{splitbadind}
\sum_{\textbf{k} \in \Delta^{(m)}_{bad}} \prod_{j=1}^{m} f_{\sigma^{k_j} \omega} \circ T_{\omega}^{k_j} = \sum_{p=1}^{{m\choose 2}} (-1)^{p+1} \frac{1}{p!} \sum_{\mathbf{l}\in\Lambda_m^p} \sum_{\textbf{k} \in \Delta_{\mathbf{l}}^{(m)}} \prod_{j=1}^m f_{\sigma^{k_j} \omega} \circ T_{\omega}^{k_j},
\end{equation}
the factor $\frac{1}{p!}$ comes from the fact that $\Delta_{\mathbf{l}}^{(m)}$ is invariant under permutations of the pairs $(l_1,l'_1), \dots ,(l_p,l'_p)$. Therefore the claim ($\text{Bad}_m$) will follow once we show that
\begin{equation*}
\left| \int_M \sum_{\textbf{k} \in \Delta_{\mathbf{l}}^{(m)}} \prod_{j=1}^m f_{\sigma^{k_j} \omega} \circ T_{\omega}^{k_j} \d\mu_{\omega} \right| = o(N^{\frac{m}{2}}),
\end{equation*}
for all $p\in \{1,\dots, {m\choose 2}\}$ and $\mathbf{l}\in \Lambda_m^p$.

Let $p\in \{1,\dots, {m\choose 2}\}$ and let $\mathbf{l}=((l_1,l'_1), \dots ,(l_p,l'_p)) \in \Lambda_m^p$. Let $a\in \{1,\dots, p\}$ be the maximum number of non-overlapping pairs, in fact, it holds that $a\leq \frac{m}{2}$. For simplicity of notation, we assume that the pairs $(l_1,l'_1), \dots, (l_a,l'_a)$ are non-overlapping. Now we split this sum even further, by considering all the bad pairs that $(l_1, l'_1),\dots, (l_a,l'_a)$ could be. More explicitly, for (not necessarily different) bad pairs $(b_1,b'_1),\dots, (b_a,b'_a)\in BP$, we consider
\begin{equation*}
\Gamma_{(b_1,b'_1),\dots, (b_a,b'_a)} = \{\textbf{k}\in \Delta_{\mathbf{l}}^{(m)} \;|\; (k_{l_1},k_{l'_1})=(b_1,b'_1), \dots , (k_{l_a}, k_{l'_a})=(b_a,b'_a)\}.
\end{equation*}
Notice that all $k_j$ can be freely chosen, unless $j$ is one of the $l_i$ or $l'_i$, and in this case it holds that
\begin{itemize}
\item if $j\in \{l_1,l'_1,\dots,l_a,l'_a\}$, then $k_j=b_j$, and
\item if $j\in \{l_1,l'_1,\dots,l_p,l'_p\} \setminus \{l_1,l'_1,\dots,l_a,l'_a\}$, then there is a proper subset $A_j\subset \{0,\dots,N-1\}$ such that $k_j\in A_j$. This set is determined by the pairs $(l_1,l'_1), \dots ,(l_p,l'_p), (b_1,b'_1),\dots, (b_a,b'_a)$ in the following way: if $i\leq a$ is so that $(l_i,j)\in \{(l_1,l'_1),\dots,(l_p,l'_p)\}$ (or $(j,l'_i)\in \{(l_1,l'_1),\dots,(l_p,l'_p)\}$) then $(b_{l_i}, k_j)\in BP$ (or $(k_j, b'_{l_i})\in BP$). These relations determine the sets $A_j$.
\end{itemize}
Therefore we can rewrite the set $\Gamma_{(b_1,b'_1),\dots, (b_a,b'_a)}$ as
\begin{equation*}
\Gamma_{(b_1,b'_1),\dots, (b_a,b'_a)} = \left\{ \textbf{k} \in \{0,\dots,N-1\}^m \;\left|
\begin{aligned}
&k_j=b_j \; \forall j\in \{l_1,l'_1,\dots,l_a,l'_a\}\\
&k_j\in A_j \; \forall j\in \{l_1,l'_1,\dots,l_p,l'_p\} \setminus \{l_1,l'_1,\dots,l_a,l'_a\}
\end{aligned}
\right. \right\}.
\end{equation*}

Let $r=\# \{l_1,l'_1,\dots,l_p,l'_p\} \setminus \{l_1,l'_1,\dots,l_a,l'_a\}$, and, for $A\subset\{0,\dots,N-1\}$, let
\begin{equation*}
S^{\omega}_A=\sum_{k\in A} f_{\sigma^k \omega} \circ T_{\omega}^k.
\end{equation*}
Then it holds
\begin{align*}
&\left| \int_M \sum_{\textbf{k} \in \Delta_{\mathbb{l}}^{(m)}} \prod_{j=1}^m f_{\sigma^{k_j} \omega} \circ T_{\omega}^{k_j} \d\mu_{\omega} \right|\\
&= \left| \sum_{(b_1,b'_1),\dots, (b_a,b'_a)\in BP} \int_M \sum_{\textbf{k} \in \Gamma_{(b_1,b'_1),\dots, (b_a,b'_a)}} \prod_{j=1}^m f_{\sigma^{k_j} \omega} \circ T_{\omega}^{k_j} \d\mu_{\omega} \right|\\
&= \left| \sum_{(b_1,b'_1),\dots, (b_a,b'_a)\in BP} \int_M \left(
\begin{aligned}
&\prod_{j\in \{l_1,l'_1,\dots,l_a,l'_a\}} f_{\sigma^{b_j} \omega} \circ T_{\omega}^{b_j}\\
& \cdot \prod_{j\in \{l_1,l'_1,\dots,l_p,l'_p\} \setminus \{l_1,l'_1,\dots,l_a,l'_a\}} S^{\omega}_{A_j}\\
& \cdot (S_{N, \omega}(\mathbf{f}))^{m-2a-r}
\end{aligned}
\right) \d\mu_{\omega} \right|\\
&\ll \sum_{(b_1,b'_1),\dots, (b_a,b'_a)\in BP} \prod_{j\in \{l_1,l'_1,\dots,l_p,l'_p\} \setminus \{l_1,l'_1,\dots,l_a,l'_a\}} \left| \left| S^{\omega}_{A_j} \right| \right|_{L^{m-2a}(\d\mu_{\omega})} \cdot \left|\int_M |S_{N, \omega}(\mathbf{f})|^{m-2a} \d\mu_{\omega}\right|^{\frac{m-2a-r}{m-2a}},
\end{align*}
where we used the generalised H\"{o}lder's inequality in the last step.

Now, finally, we claim that, for every $m\geq 1$, it holds that
\begin{equation}\tag{$\text{Sub}_m$}
\int_M |S^{\omega}_{A}|^m \d\mu_{\omega} \ll N^{\frac{m}{2}} \quad \forall A\subset \{0,\dots, N-1\}.
\end{equation}
Indeed, since $a\geq 1$, the claim ($\text{Sub}_m$) implies that
\begin{equation*}
\left| \int_M \sum_{\textbf{k} \in \Delta_{\mathbf{l}}^{(m)}} \prod_{j=1}^m f_{\sigma^{k_j} \omega} \circ T_{\omega}^{k_j} \d\mu_{\omega} \right| \ll (\# BP)^a N^{\frac{m}{2}-a} = o(N^{\frac{m}{2}}),
\end{equation*}
and using \eqref{splitbadind}, ($\text{Bad}_m$) follows.

(v) At first sight ($\text{Sub}_m$) might seem a bit problematic to show due to the absolute values inside the integral. Notice, however, that
\begin{itemize}
\item If $m$ is even, then the absolute value does not matter, and ($\text{Sub}_m$) becomes simply
\begin{equation*}
\int_M (S^{\omega}_{A})^m \d\mu_{\omega}.
\end{equation*}
In this case, we can use the same mixing techniques outlined in steps (iv) and (v).
\item If $m$ is odd, and ($\text{Sub}_{m+1}$) holds, then we can use H\"{o}lder's inequality to deduce
\begin{equation*}
\int_M |S^{\omega}_{A}|^m \d\mu_{\omega} \ll \left(\int_M |S^{\omega}_{A}|^{m+1} \d\mu_{\omega}\right)^{\frac{m}{m+1}} \ll N^{\frac{m}{2}},
\end{equation*}
and ($\text{Sub}_m$) follows.
\end{itemize}

We will show ($\text{Sub}_m$) by induction on $m$. 

For $m=2$ and $A\subset \{0,\dots, N_1\}$, using mixing, we have
\begin{align*}
\int_M (S^{\omega}_{A})^2 \d\mu_{\omega} & \leq \sum_{(k,k')\in \{0,\dots, N-1\}^2} \left|\int_M f_{\sigma^k \omega} \circ T_{\omega}^k \cdot f_{\sigma^{k'} \omega} \circ T_{\omega}^{k'} \d\mu_{\omega}\right|\\
&\ll \sum_{n=0}^{N-1} N^*(\sigma^n \omega) + \sum_{n=0}^{N-1} \sum_{k=N^*(\sigma^n \omega)}^{N-n-1} e^{-\gamma k^{\theta}}\\
&\ll N,
\end{align*}
and ($\text{Sub}_2$) holds. 

Now assume, for some even $m$, that ($\text{Sub}_m$) (and by H\"{o}lder's inequality also ($\text{Sub}_{m'}$) for $m'\leq m$) holds. We will show ($\text{Sub}_{m+2}$). Let $A\subset \{0,\dots, N-1\}$ and define analogously to the above
\begin{align*}
\tilde{\Delta}_{good}^{(m)} &= \{\textbf{k}\in A^m \;|\; \forall (j,j')\in \{1,...,m\}^2 \; \text{ it holds that } \; (k_j,k_{j'}) \not\in BP \} ,\\
\tilde{\Delta}_{bad}^{(m)}& = \{\textbf{k}\in A^m \;|\; \exists (j,j')\in \{1,...,m\}^2 \; \text{ such that } \; (k_j,k_{j'}) \in BP \} ,
\end{align*}
then we can split up
\begin{equation*}
(S_{N, \omega}(\mathbf{f}))^m= \sum_{\textbf{k} \in \tilde{\Delta}^{(m)}_{good}} \prod_{j=1}^{m} f_{\sigma^{k_j} \omega} \circ T_{\omega}^{k_j} + \sum_{\textbf{k} \in \tilde{\Delta}^{(m)}_{bad}} \prod_{j=1}^{m} f_{\sigma^{k_j} \omega} \circ T_{\omega}^{k_j}.
\end{equation*}
Therefore, the claim \eqref{momentcltclaim} will follow once we show
\begin{equation}\label{subgoodest}
\left|\sum_{\textbf{k} \in \tilde{\Delta}^{(m)}_{good}} \int_M \prod_{j=1}^{m} f_{\sigma^{k_j} \omega} \circ T_{\omega}^{k_j} \d\mu_{\omega}\right| = O(N^{\frac{m}{2}}),
\end{equation}
and
\begin{equation}\label{subbadest}
\left|\sum_{\textbf{k} \in \tilde{\Delta}^{(m)}_{bad}} \int_M \prod_{j=1}^{m} f_{\sigma^{k_j} \omega} \circ T_{\omega}^{k_j} \d\mu_{\omega}\right| = o(N^{\frac{m}{2}}).
\end{equation}

Arguing as in steps (iv) and (v), with $\{0,\dots, N-1\}$ replaced by $A$, we obtain \eqref{subgoodest} and that \eqref{subbadest} follows from ($\text{Sub}_1$), $\dots$, ($\text{Sub}_m$). Hence ($\text{Sub}_{m+2}$) holds, and by H\"{o}lder's inequality also ($\text{Sub}_{m+1}$).

We conclude that ($\text{Sub}_{m}$) holds for all $m$, and the proof is finished. 

\end{proof}

\begin{proof}[Proof of Proposition \ref{simgaprop}]
(i) If $\mathbf{f}$ is a random coboundary in $L^2$ then it holds that $||g_{\omega}||_{L^2(\d\mu_{\omega})} \in L^2(\P)$, and therefore $||g_{\sigma^N \omega}||_{L^2(\d\mu_{\sigma^N \omega})}$ grows slower than $\sqrt{N}$. Using \eqref{sigmadef} and \eqref{telescope}, it follows that $\sigma_{\mathbf{f}}=0$.

(ii) For $N\geq 1$, let 
\begin{equation*}
\mathbf{g}_N(x,\omega)=\sum_{k=0}^{N-1} f(S^k(x,\omega)),
\end{equation*} 
where $\S(x,\omega)=(T_{\omega}(x),\sigma \omega)$ is the skew product. As in \cite{DFGTV18a}, the $L^2(\mu)$ norm can be computed as
\begin{align*}
\int_{M\times \Omega} \mathbf{g}_N^2 \d\nu & = \E \left( \sum_{n=0}^{N-1} \int_M f_{\sigma^n \omega}^2 \d\mu_{\sigma^n\omega} + 2 \sum_{n=0}^{N-1} \sum_{k=1}^{N-n-1} \int_M f_{\sigma^n \omega} \cdot f_{\sigma^{n+k}\omega} \circ T_{\sigma^n\omega}^k \d\mu_{\sigma^n\omega} \right)\\
&=N \E\left( \int_M f_{\omega}^2 \d\mu_{\omega} + 2\sum_{k=1}^{\infty} \int_M f_{\omega} \cdot f_{\sigma^k\omega} \circ T_{\omega}^n \d\mu_{\omega} \right) \\
& -N \E\left(\sum_{k=N}^{\infty} \int_M f_{\omega} \cdot f_{\sigma^{k}\omega} \circ T_{\omega}^k \d\mu_{\omega} \right) - \sum_{k=1}^{N-1} k \E \left( \int_M f \cdot f_{\sigma^{k}\omega} \circ T_{\omega}^k \d\mu_{\omega} \right),
\end{align*}
and since $\sigma_{\mathbf{f}}=0$ by assumption, considering \eqref{sigmadef}, we obtain
\begin{equation*}
\int_{M\times \Omega} \mathbf{g}_N^2 \d\nu = -N \E\left(\sum_{k=N}^{\infty} \int_M f_{\omega} \cdot f_{\sigma^{k}\omega} \circ T_{\omega}^k \d\mu_{\omega} \right) - \sum_{k=1}^{N-1} k \E \left( \int_M f_{\omega} \cdot f_{\sigma^{k}\omega} \circ T_{\omega}^k \d\mu_{\omega} \right).
\end{equation*}
Using mixing, we estimate
\begin{align*}
\int_{M\times \Omega} \mathbf{g}_N^2 \d\mu &\ll 1 + N \E(\max(N^*(\omega)-N,0) + \E \left( \sum_{k=1}^{\min(N^*(\omega), N-1} k \right)\\
&\ll 1 + N \sum_{k=N}^{\infty} \P(N^* > N) + \E((N^*)^2),
\end{align*}
which is bounded since $\P(N^* > N) \ll N^{-2}$ by Markov's inequality. Now $(\mathbf{g}_N)_N$ is bounded in $L^2(\nu)$, and therefore, using the Banach--Alaoglu Theorem, converges weakly along a subsequence, say
\begin{equation*}
\mathbf{g}_{N_k} \rightarrow \mathbf{g} \quad \text{weakly in } L^2(\nu).
\end{equation*}
Then, using mixing once more, for any function $\mathbf{h}$ such that each $\mathbf{h}(\cdot, \omega) \in \mathfrak{B}$ and $||\mathbf{h}(\cdot, \omega)||_{\mathfrak{B}} \in L^1(\P)$, it holds that
\begin{align*}
\int_{M\times \Omega} \mathbf{h} \cdot (\mathbf{f} - \mathbf{g} + \mathbf{g} \circ S) \d\nu &= \lim_{k\rightarrow\infty} \int_{M\times \Omega} \mathbf{h} \cdot (\mathbf{f} - \mathbf{g}_{N_k} + \mathbf{g}_{N_k} \circ \S) \d\nu\\
&=\lim_{k\rightarrow\infty} \int_{M\times \Omega} \mathbf{h} \cdot \mathbf{f} \circ S^{N_k} \d\nu\\
&=\lim_{k\rightarrow\infty} \E\left(\int_M \mathbf{h}(\cdot,\omega) \cdot f_{\sigma^{N_k} \omega} \circ T_{\omega}^{N_k} \d\mu_{\omega} \d\mu_{\omega} \right)\\
&\ll \lim_{k\rightarrow\infty} e^{-\gamma N_K^{\theta}} + \E\left(N^* \cdot \one_{N^*>N_k} \right)\\
&=0
\end{align*}
thus it follows that $\mathbf{f}=\mathbf{g}-\mathbf{g}\circ \S$.
\end{proof}

\section{Proof of the PLT}\label{pltproofsec}

This section is devoted to the proof of Theorem~\ref{pltthm} under the more general condition~\eqref{asm:mem} from Remark~\ref{memcondpoisrem}, in place of~\eqref{asm:qmem}. For simplicity of notation, we assume without loss of generality that $H_2(x) = x$.

In contrast to the CLT, the present argument allows the function $C(\omega)$ to be arbitrary. By ergodicity of the driving system $\sigma$, the sequence $C(\sigma^n \omega)$ is typically well controlled along most times, allowing us to discard a sparse set of indices and effectively treat $C(\omega)$ as bounded.

More precisely, for any $C_0 > 0$ with $\mathbb{P}(C < C_0) = 1 - \epsilon$, one has that for $\mathbb{P}$-a.e.\ $\omega$, the set $\{n \geq 0 : C(\sigma^n \omega) < C_0\}$ has asymptotic density at least $1 - \epsilon$. Iterating this observation yields control of $C(\sigma^n \omega)$ outside a sparse subsequence; this will be made precise in Lemmas~\ref{badseqclem1} and~\ref{badseqmixlem}.

The proof proceeds by the method of moments. After establishing a slow recurrence property (cf.\ \cite{dfl22}), we control contributions from exceptional times and complete the argument.

The structure is as follows. We first control $C(\sigma^n \omega)$ for typical $n$ (Lemma~\ref{badseqmixlem}). We then prove a slow recurrence estimate (Lemma~\ref{badrecseqlem}). Finally, we carry out the method of moments argument.

\begin{definition}
A sequence $A\subset \N\cup \{0\}$ is said to have density $0$ if 
\begin{equation*}
\lim_{N\rightarrow\infty} \frac{\# (A\cap [0,N])}{N} = 0.
\end{equation*}
\end{definition}

\begin{lemma}\label{badseqclem1}
Let $C:\Omega \rightarrow (0,\infty)$ and $\Lambda_n$ be constants with
\begin{equation*}
\lim_{n\rightarrow\infty} \Lambda_n = \infty.
\end{equation*}
Then, for $\P$-a.e.\ $\omega$, there is a density $0$ sequence $\B=\B(\omega)\subset \N\cup \{0\}$ such that
\begin{equation}\label{Cbound}
C(\sigma^n \omega) \leq \Lambda_n \quad \forall n\geq 1, n\notin \B.
\end{equation}
\end{lemma}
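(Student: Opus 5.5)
The natural choice is to take $\B(\omega)=\{n\geq 0 : C(\sigma^n\omega)>\Lambda_n\}$ and show that this set has density $0$ for $\P$-a.e.\ $\omega$. With this definition, \eqref{Cbound} holds by construction. Note that we may assume $\Lambda_n$ is nondecreasing: if not, replace it by $\tilde\Lambda_n=\inf_{k\geq n}\Lambda_k$. This sequence still tends to infinity and satisfies $\tilde\Lambda_n\leq\Lambda_n$, so the set defined with $\tilde\Lambda_n$ contains $\B(\omega)$. It therefore suffices to prove density $0$ for the larger set.

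The key step is a comparison with a fixed threshold, combined with Birkhoff's ergodic theorem. Fix $L>0$. Since $C$ is finite-valued, $\P(C>L)\to 0$ as $L\to\infty$. Choose $n_L$ such that $\Lambda_n\geq L$ for all $n\geq n_L$. Then for $N\geq n_L$,
\begin{equation*}
\frac{\#(\B(\omega)\cap[0,N])}{N}\leq \frac{n_L}{N}+\frac{1}{N}\sum_{n=0}^{N}\one_{C(\sigma^n\omega)>L}.
\end{equation*}
Since $\sigma$ is ergodic and $\P$-preserving, Birkhoff's theorem applied to $\one_{C>L}$ shows that the right-hand side converges to $\P(C>L)$ for $\P$-a.e.\ $\omega$. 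This is the same mechanism as in Lemma~\ref{intsumlem}, with a truncation level growing along the orbit.

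To make the statement hold almost surely, apply Birkhoff's theorem simultaneously for the countably many levels $L\in\N$, which gives a full-measure set of $\omega$. For such $\omega$ we get
\begin{equation*}
\limsup_{N\to\infty}\frac{\#(\B(\omega)\cap[0,N])}{N}\leq\P(C>L)\quad\text{for every }L\in\N.
\end{equation*}
Letting $L\to\infty$ yields density $0$.

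There is no real obstacle in this argument. The only points needing care are measurability of $\B(\omega)$, which is immediate because $C$ is measurable, and the use of a countable family of thresholds so that the exceptional null set does not depend on $L$. No integrability of $C$ is required: only $\P(C>L)\to0$ is used, which matches the paper's claim that the PLT needs no integrability assumption on $C(\omega)$.
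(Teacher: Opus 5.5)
Your argument is correct, and it takes a simpler route than the paper.

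The paper works with the sets $U_n=\{C>\Lambda_n\}$. It uses the ergodic theorem to get $\omega$-dependent times $\tilde N_n$ after which the frequency of visits to $U_n$ is at most $2\P(U_n)$. It then builds an inductive sequence of block endpoints $N_n$, growing fast enough that earlier blocks are negligible, and puts into $\B$ only those $k\in(N_n,N_{n+1}]$ with $\sigma^k\omega\in U_n$. Deducing $C(\sigma^k\omega)\le\Lambda_k$ off $\B$ then needs monotonicity of $\Lambda$ and $N_n\ge n$.

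You instead take the minimal admissible set $\{n: C(\sigma^n\omega)>\Lambda_n\}$; any valid $\B$ must contain it, apart possibly from $n=0$. You bound its upper density by $\P(C>L)$ for each fixed integer level $L$, using only that $\Lambda_n\ge L$ eventually. So your reduction to nondecreasing $\Lambda_n$ is harmless but not actually needed. This avoids the bookkeeping with the $N_n$, for instance the quotient by $\P(U_n)$, which needs separate treatment when $\P(U_n)=0$.

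Your route has a further advantage. Your full-measure set of $\omega$ depends only on $C$, through genericity for $\one_{\{C>L\}}$ with $L\in\N$, and not on the sequence $(\Lambda_n)$. This is precisely the uniformity the paper later needs when it chooses $(x^*,\omega)$ independently of $(r_l)$ and applies this lemma with $r_l$-dependent thresholds.

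What the paper's block construction buys is mainly a template it reuses in Lemmas~\ref{badseqmixlem} and~\ref{badrecseqlem}. There the relevant threshold is tied to the scale $r_l$ and the time window up to $\lceil t/\mu(B_{r_l}(x^*))\rceil$, rather than to the index $n$ itself.
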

\begin{proof}
Without loss of generality, we assume that $\Lambda_n$ is monotone increasing. Otherwise we replace $\Lambda_n$ by the smaller - but monotonous - sequence $\min_{j\geq n} \Lambda_j$.

Denote the bad sets as $U_n=\{\omega'\in \Omega\;|\; C(\omega')> \Lambda_n\}$. We will consider a generic $\omega$ where it holds that
\begin{equation*}
\lim_{N\rightarrow \infty} \frac{\# \{k\leq N\; |\; \sigma^k \omega \in U_n\}}{N} = \P(U_n) \quad \forall n\geq 1,
\end{equation*}
by the Ergodic Theorem, this holds for $\P$-a.e.\ $\omega$. For the rest of the proof, we will focus on such $\omega$, and omit any dependence on $\omega$ from our notation.

Let $\tilde{N}_n$ be such that 
\begin{equation}\label{uinnumber}
\# \{k\leq N\; |\; \sigma^k \omega \in U_n\} \leq 2N \P(U_n) \quad \forall N\geq \tilde{N}_n.
\end{equation}
Define $N_n$ inductively as 
\begin{equation}\label{defNi}
N_1=\tilde{N}_1 \quad \text{and} \quad N_{n}= \max\left(n, \tilde{N}_{n}, \fceil{\sum_{j=2}^{n-1} N_{j} \P(U_{j-1})}{\P(U_{n})} \right),
\end{equation}
and let
\begin{equation*}
\B_n = \{N_n< k\leq N_{n+1}\; |\; \sigma^k \omega \in U_n\} \subset\{N_n + 1, N_n + 2,\dots N_{n+1}\}.
\end{equation*}
We check that the claims of the Lemma hold for $\B = \bigcup_{n\geq 1} B_n \cup \{0,\dots, N_1\}$.

(i) First we show that $\B$ has density $0$. Let $\epsilon>0$ and $n_{\epsilon}$ be so big that
\begin{equation*}
\P(U_n) < \epsilon \quad \forall n \geq n_{\epsilon}-1,
\end{equation*}
then, for $N> N_{n_{\epsilon}}$, say $N_n < N \leq N_{n+1}$ for some $n\geq n_{\epsilon}$, using \eqref{uinnumber} and \eqref{defNi}, it holds that
\begin{align*}
\# (\B\cap [0,N] ) &\leq \sum_{j=1}^{n-1} \#\B_j + \# (\B_n \cap [0,N])\\
&\leq 2 \sum_{j=2}^{n} N_{j} \P(U_{j-1}) + 2N \P(U_n)\\
&\leq 4 N (\P(U_n)+\P(U_{n-1})) < 8N\epsilon.
\end{align*}

(ii) Now, to check \eqref{Cbound}, let $n\geq 1$ and let $l$ be such that $N_l < n \leq N_{l+1}$. By construction, it holds that $n\geq l$. If $n\notin \B$, then it holds that $\sigma^n \omega \notin U_l$, and by monotonicity
\begin{equation*}
C(\sigma^n \omega) \leq \Lambda_l \leq \Lambda_n.
\end{equation*}
\end{proof}

The exact formulation of this principle, which we will use in our proof, is the following.

\begin{lemma}\label{badseqmixlem}
Let $C:\Omega \rightarrow (0,\infty)$ and, for $m\geq 1$, let $H_m: \R^{m-1} \rightarrow\R$ be a locally bounded function. Then, for any array $(\Lambda_l^{(m)})_{l\geq 1, n \geq 2}$ of positive constants with
\begin{equation*}
\lim_{l\rightarrow\infty} \Lambda_l^{(m)} = \infty \quad \forall m\geq 2, 
\end{equation*}
for any sequence $r_l\searrow 0$, and $\P$-a.e.\ $\omega$, there is a density $0$ sequence $\B=\B(\omega)\subset \N\cup \{0\}$ and a function $M(l)\geq 1$ such that, for all $t>0$, there is a $L(t)\geq 1$ such that, for $l\geq L(t)$, $m\leq M(l)$ and $0\leq k_0\leq \dots \leq k_{m-2} \leq \fceil{t}{\mu(B_{r_l}(x^*))}$ with $k_j\notin B$ for $j=0,\dots, m-2$, it holds that
\begin{equation}\label{Bcond}
H_m(C(\sigma^{k_0} \omega), C(\sigma^{k_1} \omega),\dots, C(\sigma^{k_{m-2}} \omega)) \leq \Lambda_l^{(m)}.
\end{equation}
Furthermore, it holds that 
\begin{equation}\label{Mgrow}
\lim_{l\rightarrow\infty} M(l) = \infty.
\end{equation}
\end{lemma}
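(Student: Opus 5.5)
The plan is to reduce Lemma~\ref{badseqmixlem} to Lemma~\ref{badseqclem1}, applied once to $C$ with a single carefully chosen threshold sequence $\Lambda_n$. The lemma bounds each value $C(\sigma^{k}\omega)$ for $k$ outside the density-zero set; local boundedness of $H_m$ then turns this into a bound on $H_m$ evaluated at the values $C(\sigma^{k_j}\omega)$.

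\textbf{Step 1 (the tolerance function).} Since each $H_m$ is locally bounded, set
\begin{equation*}
h_m(R):=\sup\{|H_m(x)| \;:\; x\in[0,R]^{m-1}\}<\infty ,
\end{equation*}
which is nondecreasing in $R$. The relevant time range at scale $l$ is $n\le T_l(t):=\fceil{t}{\mu(B_{r_l}(x^*))}$. Here $T_l(t)\to\infty$ as $l\to\infty$, because $r_l\to 0$ and the measure of balls tends to $0$ by (VOL). I will treat $x^*$ as fixed, as the statement implicitly does; this will need to be said explicitly in the writeup.

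\textbf{Step 2 (choice of $M(l)$ and $\Lambda_n$).} Pick a nondecreasing function $R(l)\to\infty$ and $M(l)\to\infty$ slowly enough that
\begin{equation*}
h_m(R(l))\le \Lambda_l^{(m)} \quad \text{for all } 2\le m\le M(l) \text{ and all large } l .
\end{equation*}
This is a diagonal argument. For each fixed $m$, choose $R$ growing so slowly that $h_m(R(l))\le\Lambda^{(m)}_l$ eventually, which is possible since $\Lambda^{(m)}_l\to\infty$. Then let $M(l)$ be the largest $m$ for which this already holds at stage $l$; only finitely many $m$ need to be handled at any stage. Concretely, first define $l_m$ so that $\inf_{l\ge l_m}\Lambda^{(j)}_l\ge h_j(m)$ for all $j\le m$, with $l_m$ increasing. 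Then set $R(l)=m$ and $M(l)=m$ for $l_m\le l<l_{m+1}$.

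Next define $\Lambda_n$ so that $\Lambda_n\le R(l)$ whenever $n\le T_l(t)$, which ties time to scale. Since $T_l(t)$ depends on $t$, I set
\begin{equation*}
\Lambda_n := R\bigl(\min\{l : T_l(1)\ge n\}\bigr)
\end{equation*}
and absorb $t$ into $L(t)$. If $n\le T_l(t)\le T_{l'}(1)$ for some $l'\ge$ (a shift of $l$ depending on $t$), monotonicity of $R$ gives the comparison. To make the $t$-dependence clean, I would instead use a slower $\tilde R(l)=R(\lfloor l/2\rfloor)$, or choose $\Lambda_n=R(\lambda(n))$ with $\lambda(n)$ the inverse of $l\mapsto T_l(l)$. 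Then for $l\ge L(t):=\lceil t\rceil$ we get $T_l(t)\le T_l(l)$, hence $\lambda(n)\le l$ for all $n\le T_l(t)$. It then suffices that $\lambda(n)\to\infty$, which holds since each $T_l(l)$ is finite.

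\textbf{Step 3 (conclusion).} Apply Lemma~\ref{badseqclem1} with this $\Lambda_n\to\infty$ to obtain a density-zero set $\B$ with $C(\sigma^n\omega)\le\Lambda_n$ for $n\notin\B$; add $0$ to $\B$ for convenience. For $l\ge L(t)$, $m\le M(l)$, and $k_j\notin\B$ with $k_j\le T_l(t)$, we have $C(\sigma^{k_j}\omega)\le R(\lambda(k_j))\le R(l)$. Hence
\begin{equation*}
H_m\le h_m(R(l))\le\Lambda_l^{(m)} ,
\end{equation*}
which is \eqref{Bcond}, and \eqref{Mgrow} holds by construction.

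The main obstacle is the bookkeeping in Step 2: making a single $\B$, independent of $t$ and $m$, work uniformly. This forces the diagonal construction of $M(l)$ and the inverse-time reparametrisation $\lambda(n)$. I expect the remaining parts to be routine.
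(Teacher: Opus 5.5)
Your proof is correct, but it places the diagonalisation over $m$ at a different point than the paper does.

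The paper applies Lemma~\ref{badseqclem1} once for each $m$. It uses thresholds $\tilde\Lambda^{(m)}_n=D^{(m)}_{l}$ on the blocks $\fceil{l-1}{\mu(B_{r_{l-1}}(x^*))}<n\le\fceil{l}{\mu(B_{r_l}(x^*))}$, where $D^{(m)}_l\to\infty$ is a radius on which $H_m\le\Lambda^{(m)}_l$; in your notation this is $D^{(m)}_{\lambda(n)}$. It then merges the resulting sets $\B_m$ into $\B=\bigcup_j\B_j\cap[N_j,\infty)$. This merged set no longer excludes $\B_m$ on the initial segment below $N_m$. So the paper has to define $M(l)$ depending on $\omega$, as the largest $M$ with $C(\sigma^k\omega)\le D^{(m)}_l$ for all $k\le N_M$ and $m\le M$.

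You instead diagonalise before invoking the lemma. You choose a single nondecreasing radius $R(l)$ serving all $2\le m\le M(l)$ at once (essentially $\min_{m\le M(l)}D^{(m)}_l$). Then one application of Lemma~\ref{badseqclem1} with $\Lambda_n=R(\lambda(n))$ suffices, no merging of bad sets is needed, and $M(l)$ is deterministic.

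The time-to-scale device is the same as the paper's: $\lambda$ inverts $l\mapsto\fceil{l}{\mu(B_{r_l}(x^*))}$, together with $L(t)=\lceil t\rceil$. Keeping separate thresholds for each $m$ gains nothing for the statement as formulated, since only the density of $\B$ matters, so your route is the more economical one.

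In a write-up:
\begin{itemize}
\item Keep only the $\lambda$ construction; drop the $T_l(1)$ attempt and the $\tilde R$ variant, neither of which handles $t>1$ cleanly.
\item Take $l_m$ strictly increasing and set $M(l)=1$ for $l<l_2$.
\item Note that what you actually use is $\mu(B_{r_l}(x^*))>0$, so that each $\fceil{l}{\mu(B_{r_l}(x^*))}$ is finite, rather than $T_l(t)\to\infty$.
\end{itemize}
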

\begin{proof}
Since $H_m$ is locally bounded, there are $D_l^{(m)} \geq 0$ such that
\begin{equation}\label{Hbound}
H_m(x_0, \dots, x_{m-2}) \leq \Lambda_l^{(m)} \quad \forall m\leq 2, x_j \leq D_l^{(m)}, j=0,\dots, m-2,
\end{equation}
furthermore $D_l^{(m)}$ can be chosen so that
\begin{equation}\label{Dgrow}
\lim_{l\rightarrow\infty} D_l^{(m)} = \infty \quad \forall m\geq 2.
\end{equation}
Replacing $D_l^{(m)}$ by $\min_{j\geq l} D_l^{(m)}$ if necessary, we may also assume that it is monotonic, meaning that $D_l^{(m)} \leq D_{l+1}^{(m)}$.

Using Lemma \ref{badseqclem1} with the sequence
\begin{equation*}
\tilde{\Lambda}_n^{(m)} = D_{l}^{(m)} \quad \text{if } \fceil{l-1}{\mu(B_{r_{l-1}}(x^*))} < n \leq \fceil{l}{\mu(B_{r_l}(x^*))},
\end{equation*}
provides, for each $m\geq 2$, a density $0$ sequence $\B_m$ such that 
\begin{equation}\label{cineq'}
C(\sigma^n \omega) \leq \tilde{\Lambda}_n^{(m)} \quad \forall n\geq 1, n\notin \B_m, m\geq 2.
\end{equation}
Let $L(t)=\lceil t \rceil$. If $l\geq L(t)$ and $k\leq \fceil{t}{\mu(B_{r_l}(x^*)}$ there is an $l' \leq l$ such that
\begin{equation*}
\fceil{l'-1}{\mu(B_{r_{l'-1}}(x^*))} < k \leq \fceil{l'}{\mu(B_{r_{l'}}(x^*))}.
\end{equation*}
By monotonicity of $D$, from \eqref{cineq'} it follows that
\begin{equation}\label{cineq}
C(\sigma^k \omega) \leq \tilde{\Lambda}_k^{(m)} = D_{l'}^{(m)} \leq D_l^{(m)} \quad \forall k\leq \fceil{t}{\mu(B_{r_l}(x^*))}, k\notin \B_m, m\geq 2.
\end{equation}

Set $N_1=0$ and, for $j\geq 2$, set
\begin{equation*}
N_j=\min(N\geq 1 \;|\; \forall n\geq N, m\leq j \; \text{it holds that} \; \#(\B_m \cap [0,n]) \leq j^{-2} n\}.
\end{equation*}
Then we define the bad sequence $\B$ as
\begin{equation*}
\B = \bigcup_{j\geq 1} \B_j \cap [N_j,\infty),
\end{equation*}
which can be readily checked to have density $0$.

Define $M(l)$ as
\begin{equation*}
M(l)=\max(M\geq 0\;|\; C(\sigma^k \omega) \leq D_l^{(m)} \; \forall k\leq N_{M}, m\leq M
\end{equation*}
Using \eqref{Dgrow}, it is straightforward that \eqref{Mgrow} holds.

Now let $t>0$, $l\geq L(t)$, $m\leq M(l)$, and $k\leq \fceil{t}{\mu(B_{r_l}(x^*))}$ with $k\notin \B$. We claim that
\begin{equation}\label{badconc}
C(\sigma^k \omega) \leq D_l^{(m)}.
\end{equation}
Once this is shown, \eqref{Hbound} will imply \eqref{Bcond}, and the proof of the Lemma will be complete.

(i) If $k\leq N_{M(l)}$, then by definition of $M$, it follows that
\begin{equation*}
C(\sigma^k \omega) \leq D_l^{(m)} \quad \forall m\leq M(l).
\end{equation*}

(ii) If $k> N_{M(l)}$ then, from the definition of $\B$, it follows that $k\notin \B_m$ for all $m\leq M(l)$. Then \eqref{cineq} implies that also in this case
\begin{equation*}
C(\sigma^k \omega) \leq D_l^{(m)} \quad \forall m\leq M(l).
\end{equation*}

Together (i) and (ii) show \eqref{badconc}, and the proof is complete.

\end{proof}

\begin{remark}
The density zero set $\mathcal{B}$ (in Lemmas~\ref{badseqmixlem} and \ref{badrecseqlem} and step~(iii) of the proof of Theorem~\ref{pltthm}) may depend on $\omega$, $x^*$, and the sequence $(r_l)$. However, the pair $(x^*,\omega)$ can be chosen independently of $(r_l)$. Indeed, we take $x^*$ to be quenched slowly recurrent (Definition \ref{qsrpdef}), such $\P$-a.e.\ density $\rho_{\omega'}$ is continuous at $x^*$, and such that $\rho(x^*)>0$, where $\rho=\frac{\d\mu}{\d\vol}$ is the density of the stationary measure. Then we choose $\omega$ such that
\begin{equation*}
\lim_{N\to \infty} \frac{\# \{k\leq N \;|\; \sigma^k \omega \in U\}}{N} = \P(U)
\end{equation*}
for every set $U$ of the form $\{\omega' : C(\omega')<A\}$, $\{\omega' : R_0(\omega')<A\}$, or one of the sets arising in step~(iii) of the proof of Theorem~\ref{pltthm}, with $A>0$, where $R_0$ is as in the definition of slow recurrence. 

This independence is crucial in order to pass to the limit $r \to 0$ in \eqref{hitconthmclaim} and \eqref{retconthmclaim}.
\end{remark}

The following two Lemmas are classical extensions of mixing, and can be proved in a straightforward manner via approximation. We will provide short proofs for convenience.

\begin{lemma}
Assume $(T_{\omega})$ is quenched stretched exponentially mixing. Then there are constants $\theta',\gamma'>0$ and a function $C':\Omega\rightarrow (0,\infty)$ such that for almost every $\omega$ and every $C^{\kappa}$ smooth function $F:M^2 \rightarrow\R$ and integers $k\geq 0$ it holds
\begin{equation}\label{asm:2em2}
\left|\int_{M} F(x, T_{\omega}^{k} x) \d\mu_{\omega}(x) - \int_{M^2} F(x, y) \d\mu_{\omega}(x) \d\mu_{\sigma^{k} \omega}(x') \right|
\leq C'(\omega) e^{-\gamma' k^{\theta'}} \|F\|_{C^{\kappa}}.
\end{equation}
\end{lemma}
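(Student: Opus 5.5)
We approximate $F$ by finite sums of product functions $g(x)h(y)$ with $g,h\in C^{\kappa}$, and apply the two-point mixing estimate \eqref{asm:qmem} (with $m=2$) to each term. The threshold is absorbed into a random constant. First we convert the threshold form into a constant form. For $k\geq N^*(\omega)$, \eqref{asm:qmem} with $f_0=g$, $f_1=h$, $k_1=k$ gives a bound $e^{-\gamma k^{\theta}}\|g\|_{C^\kappa}\|h\|_{C^\kappa}$. For $k<N^*(\omega)$ we use the trivial bound $2\|g\|_\infty\|h\|_\infty\leq 2e^{\gamma N^*(\omega)^\theta}e^{-\gamma k^\theta}\|g\|\|h\|$. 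Hence the covariance of $g$ and $h\circ T^k_\omega$ is at most $C_1(\omega)e^{-\gamma k^\theta}\|g\|_{C^\kappa}\|h\|_{C^\kappa}$, with $C_1(\omega)=2e^{\gamma N^*(\omega)^\theta}$. Under the more general hypothesis \eqref{asm:mem} one takes $C_1=H_2(C(\omega))$ directly.

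Next comes the approximation step. Cover $M$ (compact) by finitely many charts and use a smooth partition of unity. On $M^2$, approximate $F$ at scale $\epsilon$ by
\[
F_\epsilon(x,y)=\sum_{i} F(x,z_i)\,\chi_i^{\epsilon}(y),
\]
where $\{\chi^\epsilon_i\}$ is a partition of unity on $M$ subordinate to $\epsilon$-balls around points $z_i$, with about $\epsilon^{-d}$ pieces and $\|\chi_i^\epsilon\|_{C^\kappa}\ll \epsilon^{-\kappa}$. Then $\|F-F_\epsilon\|_\infty\ll \epsilon^{\min(\kappa,1)}\|F\|_{C^\kappa}$, and $\|F(\cdot,z_i)\|_{C^\kappa}\leq\|F\|_{C^\kappa}$. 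Alternatively one can convolve in charts and expand in a Fourier/tensor basis; either way we only need polynomial bounds in $\epsilon^{-1}$ on the number of terms and their norms.

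Apply Step 1 to each product term. Replacing $F$ by $F_\epsilon$ in both integrals costs $2\|F-F_\epsilon\|_\infty$, since both integrals are against probability measures. This yields
\[
\Big|\int F(x,T^k_\omega x)\,d\mu_\omega-\iint F\,d\mu_\omega\,d\mu_{\sigma^k\omega}\Big|\ll \big(\epsilon^{\min(\kappa,1)}+C_1(\omega)\epsilon^{-d-\kappa}e^{-\gamma k^\theta}\big)\|F\|_{C^\kappa}.
\]
Choose $\epsilon=e^{-\gamma k^\theta/(2(d+\kappa))}$. Both terms are then bounded by $C'(\omega)e^{-\gamma' k^{\theta}}$, with $\theta'=\theta$, $\gamma'=\gamma\min(\kappa,1)/(2(d+\kappa))$ (after adjusting), and $C'(\omega)\asymp C_1(\omega)+1$.

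The main obstacle is the approximation step. We must produce the tensor approximation with uniformly controlled $C^\kappa$ norms of the factors on a general manifold, including the case of non-integer $\kappa$ (Hölder parts). These norms must stay polynomial in $\epsilon^{-1}$, and $F(\cdot,z_i)$ must remain in $C^\kappa$ with norm $\leq\|F\|_{C^\kappa}$. This requires care with the charts, but it involves no dynamical difficulty.
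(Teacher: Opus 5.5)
Your proof is correct and is essentially the paper's argument. You convert the threshold into a random constant $C_1(\omega)\asymp e^{\gamma N^*(\omega)^{\theta}}$ (as in Remark~\ref{memcondpoisrem}), approximate $F$ through a partition of unity at a scale exponentially small in $k^{\theta}$ by a finite sum of products, apply two-point mixing termwise, and balance the approximation error against the polynomial-in-$\epsilon^{-1}$ loss.

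The only difference is that you discretise just the second variable, $F_\epsilon=\sum_i F(\cdot,z_i)\otimes\chi_i^{\epsilon}$, whereas the paper discretises both, $\tilde F=\sum_{z,z'}F(z,z')\,\psi_z\otimes\psi_{z'}$. Your version uses fewer terms and also covers $\kappa<1$ via the H\"older modulus. The paper's version only ever inserts the fixed bump functions $\psi_z$ into the mixing estimate.
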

\begin{proof}
Let $\gamma,\theta,C(\omega)$ be the constants from \eqref{asm:mem}, and denote $\gamma'=\frac{\gamma}{100 d \kappa}$. For $n\geq 1$ let $S\subset M$ be a maximal $e^{-\gamma' n^{\theta}}$ separated set, meaning $S$ is a maximal set such that
\begin{equation*}
B_{e^{-\gamma' n^{\theta}}}(z) \cap B_{e^{-\gamma' n^{\theta}}}(z') = \emptyset \quad \forall z,z'\in S, z\neq z'.
\end{equation*}
Clearly $\# S \ll e^{d\gamma' n^{\theta}}$. It also holds that $(B_{2 e^{-\gamma' n^{\theta}}}(z))_{z\in S}$ is a cover of $M$, otherwise $S$ would not be maximal. It is now straightforward to construct a partition of unity $(\psi_z)_{z\in S}$ such that
\begin{itemize}
\item $\psi_z(x)\geq 1$ for all $x\in M$,
\item $\psi_z(x)=1$ for $x\in B_{e^{-\gamma' n^{\theta}}-e^{-2\gamma' n^{\theta}}}(z)$, and $\psi_z(x)=0$ for $x\not\in B_{2e^{-\gamma' n^{\theta}}}(z)$
\item $\psi_z(x)=0$ if $x\in B_{e^{-\gamma' n}-e^{-2\gamma' n^{\theta}}}(z')$ for some $z'\in S$ with $z'\neq z$,
\item $\|\psi_z\|_{C^{\kappa}} \ll e^{2\kappa \gamma' n^{\theta}}$ for all $z\in S$,
\item and $\sum_{z\in S} \psi_z(x)=1$ for all $x\in M$.
\end{itemize}
Let $\tilde{F}:M^2 \rightarrow\R$ be defined as
\begin{equation*}
\tilde{F}(x,x') = \sum_{z,z'\in S} F(z,z') \psi_z(x) \cdot \psi_{z'}(x') \quad \forall x,x'\in M.
\end{equation*}
For $x,x'\in M$, by construction it holds that $\psi_z(x) \cdot \psi_{z'}(x') = 0$ whenever $d(x,z)\geq 2e^{-\gamma' n}$ or $d(x',z')\geq 2e^{-\gamma' n^{\theta}}$. Therefore
\begin{align*}
\left| F(x,x') - \tilde{F}(x,x') \right| & \leq \sum_{\substack{z,z'\in S, d(x,z)< 2e^{-\gamma' n^{\theta}},\\ d(x',z')< 2e^{-\gamma' n^{\theta}}}} |F(x,x')-F(z,z')| \psi_z(x) \cdot \psi_{z'}(x')\\
&\leq \sup_{\substack{z,z'\in S, d(x,z)< 2e^{-\gamma' n^{\theta}},\\ d(x',z')< 2e^{-\gamma' n^{\theta}}}} |F(x,x')-F(z,z')|\\
&\ll e^{-\gamma' n^{\theta}} \|F\|_{C^1}.
\end{align*}
Therefore, it holds that
\begin{align*}
&\left|\int_{M} F(x, T_{\omega}^{k} x) \d\mu_{\omega}(x) - \int_{M^2} F(x, y) \d\mu_{\omega}(x) \d\mu_{\sigma^{k} \omega}(x') \right|\\
&\leq \left|\int_{M} F(x, T_{\omega}^{k} x) \d\mu_{\omega}(x) - \int_{M} \tilde{F}(x, T_{\omega}^{k} x) \d\mu_{\omega}(x) \right|\\
&+ \left|\int_{M} \tilde{F}(x, T_{\omega}^{k} x) \d\mu_{\omega}(x) - \int_{M^2} \tilde{F}(x, y) \d\mu_{\omega}(x) \d\mu_{\sigma^{k} \omega}(x') \right|\\
&+ \left|\int_{M^2} \tilde{F}(x, y) \d\mu_{\omega}(x) \d\mu_{\sigma^{k} \omega}(x') - \int_{M^2} F(x, y) \d\mu_{\omega}(x) \d\mu_{\sigma^{k} \omega}(x') \right|\\
&\ll \sup_{x,x'\in M} |F(x,x')-\tilde{F}(x,x')|\\
& + \sum_{z,z'\in S} |F(z,z')| \left|\int_{M} \psi_z(x) \cdot \psi_{z'}(T_{\omega}^k) \d\mu_{\omega}(x) - \int_{M^2} \psi_z(x) \cdot \psi_{z'}(x') \d\mu_{\omega}(x) \d\mu_{\sigma^{k} \omega}(x') \right|\\
&\ll e^{-\gamma' n^{\theta}} \|F\|_{C^1} + C(\omega) e^{4\kappa \gamma' n^{\theta} + d\gamma' n^{\theta}} e^{-\gamma n^{\theta}} \|F\|_{L^{\infty}},
\end{align*}
where we used $\# S \ll e^{d\gamma' n^{\theta}}$ in the last step.
\end{proof}

\begin{lemma}[Mixing estimate for balls]\label{ballmixlem}
Let $\omega$ and $n\geq 2$ be such that \eqref{asm:mem} holds. Let $x^*$ be such that \eqref{asm:volcontrol}, $r_0,...,r_{n-1}>0$ and $0 \leq k_0 \leq \dots \leq k_{n-1}$ then it holds that
\begin{align*}
&\left|\mu_{\omega} \left(\bigcap_{j=0}^{n-1} T_{\sigma^{k_j} \omega}^{-k_j} B_{r_j}(x^*)\right) - \prod_{j=0}^{n-1} \mu_{\sigma^{k_j} \omega}(B_{r_j}(x^*)) \right|\\
&\ll H_n(C(\sigma^{k_0} \omega), C(\sigma^{k_1} \omega),\dots, C(\sigma^{k_{n-2}} \omega)) e^{-\frac{\gamma}{n\kappa+1} \min_{0\leq j\leq n-2}(k_{j+1}-k_j)^{\theta}} \max_{j=0,...,n-1} r_j^{\frac{(d-1) n \kappa}{n\kappa+1}} .
\end{align*}
\end{lemma}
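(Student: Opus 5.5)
We approximate each indicator $\one_{B_{r_j}(x^*)}$ from above and below by $C^{\kappa}$ bump functions, apply the mixing estimate \eqref{asm:mem} to the smooth approximants, and then optimise the smoothing scale against the mixing gap. Set $g=\min_{0\leq j\leq n-2}(k_{j+1}-k_j)$ and fix a scale $\delta\in(0,\min_j r_j)$; if $\delta$ would exceed some $r_j$, the bound is handled trivially by (VOL). For each $j$ we take $C^{\kappa}$ functions $\phi_j^-\leq \one_{B_{r_j}(x^*)}\leq \phi_j^+$, where $\phi_j^+=1$ on $B_{r_j}(x^*)$ and vanishes outside $B_{r_j+\delta}(x^*)$, and $\phi_j^-=1$ on $B_{r_j-\delta}(x^*)$ and vanishes outside $B_{r_j}(x^*)$. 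Both take values in $[0,1]$ and satisfy $\|\phi_j^{\pm}\|_{C^{\kappa}}\ll \delta^{-\kappa}$.

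Since all the functions are nonnegative, monotonicity gives
\begin{equation*}
\int_M \prod_{j} \phi_j^-\circ T_{\omega}^{k_j}\d\mu_{\omega}\leq \mu_{\omega}\Big(\bigcap_{j} T_{\sigma^{k_j}\omega}^{-k_j}B_{r_j}(x^*)\Big)\leq \int_M \prod_{j} \phi_j^+\circ T_{\omega}^{k_j}\d\mu_{\omega}.
\end{equation*}
Apply \eqref{asm:mem} to the families $(\phi_j^{\pm})$. Each integral differs from $\prod_j\int\phi_j^{\pm}\d\mu_{\sigma^{k_j}\omega}$ by at most
\begin{equation*}
H_n(C(\sigma^{k_0}\omega),\dots,C(\sigma^{k_{n-2}}\omega))\,e^{-\gamma g^{\theta}}\delta^{-n\kappa}.
\end{equation*}
Next we compare $\prod_j\int\phi_j^{\pm}\d\mu_{\sigma^{k_j}\omega}$ with $\prod_j\mu_{\sigma^{k_j}\omega}(B_{r_j}(x^*))$. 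By (VOL) the densities are essentially bounded uniformly in $\omega$, so each factor changes by at most $\ll \vol(B_{r_j+\delta}\setminus B_{r_j-\delta})$. The annulus volume is $\ll \delta\, r^{d-1}$ with $r=\max_j r_j$, for $\delta\leq r$. All factors are bounded by $1$, so telescoping the product difference gives a total error $\ll n\,\delta\, r^{d-1}$.

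It remains to balance the two errors $H_n e^{-\gamma g^{\theta}}\delta^{-n\kappa}$ and $\delta r^{d-1}$, which we do by choosing $\delta=e^{-\gamma g^{\theta}/(n\kappa+1)}$ (capped at $r$). Then $e^{-\gamma g^{\theta}}\delta^{-n\kappa}=\delta$, and the first error becomes $H_n e^{-\frac{\gamma}{n\kappa+1}g^{\theta}}$. The stated factor $r^{\frac{(d-1)n\kappa}{n\kappa+1}}$ suggests instead choosing $\delta$ so that $\delta^{n\kappa+1}\asymp e^{-\gamma g^{\theta}}r^{-(d-1)}$. With that choice both terms equal $(e^{-\gamma g^{\theta}})^{\frac{1}{n\kappa+1}}r^{\frac{(d-1)n\kappa}{n\kappa+1}}$, up to the factor $H_n$, which we may assume is $\geq 1$ after enlarging it. This gives exactly the claimed bound.

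The main obstacle is bookkeeping rather than any deep step. First, the annulus estimate must be uniform in $x^*$ and $\omega$ and requires $\delta\leq r$. Second, the degenerate regime in which the chosen $\delta$ exceeds $r$, or some $r_j$, must be treated separately. In that regime $e^{-\gamma g^{\theta}}$ is large compared with $r^{n\kappa+d}$, and the trivial bound $\mu_{\omega}(\cdot)+\prod_j\mu_{\sigma^{k_j}\omega}(B_{r_j}(x^*))\ll r^{d}$ is then already dominated by the right-hand side. Checking this exponent comparison, and that the constants in \eqref{asm:mem} and (VOL) do not depend on $\omega$ beyond $H_n$, is where care is needed.
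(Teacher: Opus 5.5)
Your argument is correct and is essentially the paper's: $C^{\kappa}$ approximants of the indicators at scale $\delta$ with norm $\ll\delta^{-\kappa}$, the mixing bound \eqref{asm:mem}, an annulus error $\ll\delta\max_j r_j^{d-1}$ from (VOL), and a balancing choice of $\delta$. The paper uses one-sided approximants $0\leq h_j\leq\one_{B_{r_j}(x^*)}$ and a triangle inequality, moving the $L^1$ error through $T_{\omega}^{k_j}$ by equivariance of the fibre measures, instead of your two-sided sandwich; taking $h_j=0$ when $\delta\geq r_j$ then removes the need for a separate degenerate case. Your choice $\delta^{n\kappa+1}\asymp e^{-\gamma g^{\theta}}r^{-(d-1)}$ is the one that actually gives the stated factor $\max_j r_j^{\frac{(d-1)n\kappa}{n\kappa+1}}$: the paper's displayed $\epsilon^*$ has $r^{d-1}$ where $r^{-(d-1)}$ is needed. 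Your normalisation $H_n\geq 1$ is also implicitly needed in the paper.
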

\begin{proof}
Denote $B_j=B_{r_j}(x^*)$ and $p=\min_{0\leq j\leq n-2}(k_{j+1}-k_j)$. For small $\epsilon>0$ there are $h_j\in C^{\kappa}$ such that $0\leq h_j\leq 1_{B_j}$ and
\begin{equation*}
\vol(1_{B_j}\neq h_j)\ll r_j^{d-1} \epsilon, \; \text{ while } ||h_j||_{C^{\kappa}}\ll \epsilon^{-\kappa}.
\end{equation*}
We estimate 
\begin{align*}
&\left|\mu_{\omega} \left(\bigcap_{j=0}^{n-1} T_{\sigma^{k_j} \omega}^{-k_j} B_j\right)-\prod_{j=0}^{n-1} \mu_{\sigma^{k_j} \omega}(B_j)\right|\leq \left|\mu_{\omega} \left(\bigcap_{j=0}^{n-1} T_{\sigma^{k_j} \omega}^{-k_j} B_j\right)-\int_M \left(\prod_{j=0}^{n-1} h_j \circ T_{\omega}^{k_j}\right) \d\mu_{\omega} \right|\\
&\quad +\left| \int_M \left(\prod_{j=0}^{n-1} h_j \circ T_{\omega}^{k_j}\right) \d\mu_{\omega} - \prod_{j=0}^{n-1} \int_M h_j \d\mu_{\sigma^{k_j} \omega} \right|\\
&\quad + \left|\prod_{j=0}^{n-1} \int_M h_j \d\mu_{\sigma^{k_j} \omega} -\prod_{j=0}^{n-1} \mu_{\sigma^{k_j} \omega}(B_j)\right|\\
&\quad \ll \max_{j=0,...,n-1} r_j^{d-1} \epsilon + H_n(C(\sigma^{k_0} \omega), C(\sigma^{k_1} \omega),\dots, C(\sigma^{k_{n-2}} \omega)) e^{-\gamma p^{\theta}} \epsilon^{-n\kappa},
\end{align*}
where we used \eqref{asm:volcontrol} to control $\|1_{B_j} - h_j\|_{L^1(\mu_{\omega})}\ll r_j^{d-1} \epsilon$ for the first and third summand, and \eqref{asm:mem} for the second summand. Letting
\begin{equation*}
\epsilon^*=\left(\max_{j=0,...,n-1} r_j^{d-1} e^{-\gamma p^{\theta}}\right)^{\frac{1}{n\kappa+1}},
\end{equation*}
yields the desired estimate.
\end{proof}

For $x,\omega$ and $r>0$ consider the \textit{self-recurrence times} defined as
\begin{equation*}
\tau_r^{\omega}(x)=\min(n\geq 1 \;|\; d(x, T_{\omega}^n x)<r),
\end{equation*}
and for $p\geq 1$ let the \textit{non-instantaneous self recurrence times} be defined as
\begin{equation*}
\tau_{r,p}^{\omega}(x)=\min(n\geq p+1 \;|\; d(x, T_{\omega}^n x)<r).
\end{equation*}

For a probability measure $\eta$ on a metric space $Y$, and a point $y\in Y$ define the \emph{lower dimension} as
\begin{equation*}
\underline{d}_{\eta}(y) = \liminf_{r\rightarrow 0} \frac{|\log \eta(B_r(y))|}{|\log r|}.
\end{equation*}
and the \emph{upper dimension} as
\begin{equation*}
\overline{d}_{\eta}(y) = \limsup_{r\rightarrow 0} \frac{|\log \eta(B_r(y))|}{|\log r|}.
\end{equation*}

For $\epsilon>0$ we say that a finite set $S\subset M$ is \emph{$\epsilon$-separated}, if for all $x,y\in S$ with $x\not = y$ it holds that
\begin{equation*}
d(x,y)>\epsilon.
\end{equation*}

\begin{lemma}\label{covermeasurelem}
Let $\eta$ be a probability measure on $M$ and, for $\epsilon>0$ small enough, let $S\subset M$ be a $2\epsilon$ separated set. Then it holds that
\begin{equation*}
\sum_{x\in S} \eta(B_{4\epsilon}(x)) \leq 4^d.
\end{equation*}
\end{lemma}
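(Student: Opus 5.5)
The plan is a bounded-multiplicity argument: show that every point of $M$ lies in at most $4^d$ of the balls $B_{4\epsilon}(x)$, $x\in S$. Then
\begin{equation*}
\sum_{x\in S}\eta(B_{4\epsilon}(x))=\int_M \sum_{x\in S}\one_{B_{4\epsilon}(x)}\d\eta\leq 4^d\,\eta(M)=4^d,
\end{equation*}
which is the claim.

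Fix $y\in M$ and let $S_y=\{x\in S \;|\; d(x,y)<4\epsilon\}$. Since $S$ is $2\epsilon$-separated, the balls $B_{\epsilon}(x)$, $x\in S_y$, are pairwise disjoint. By the triangle inequality each of them is contained in $B_{5\epsilon}(y)$. I would then compare volumes in the reference measure $\vol$. For $\epsilon$ small enough, that is, below a scale depending only on the compact manifold $M$ (its injectivity radius and curvature bounds), one has $\vol(B_s(z))\asymp s^d$ uniformly in $z$ for $s\leq 5\epsilon$. This gives
\begin{equation*}
\#S_y\leq \frac{\vol(B_{5\epsilon}(y))}{\min_{x\in S_y}\vol(B_\epsilon(x))}.
\end{equation*}

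The main obstacle is the constant: naive volume doubling gives roughly $5^d$ times a distortion factor, not exactly $4^d$. To get $4^d$ I would work in normal coordinates, where for small $\epsilon$ the metric is $(1+o(1))$-close to Euclidean. In the Euclidean model, disjoint balls of radius $\epsilon$ centred within distance $4\epsilon$ of $y$ number at most $(4\epsilon+\epsilon)^d/\epsilon^d\cdot(1+o(1))$. A slightly sharper packing count, using the separation $>2\epsilon$ rather than $\geq 2\epsilon$, or shrinking the radii, may be needed to bring this under $4^d$.

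If the exact constant cannot be reached cleanly, the fallback is the weaker bound $\ll 1$ with a constant depending only on $d$ and $M$. That suffices for every later use of the lemma, where it only enters through $\ll$ estimates. I would note this explicitly, since the phrase "for $\epsilon>0$ small enough" already signals a geometry-dependent threshold.
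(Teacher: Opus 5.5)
Your argument is the paper's own: bound the multiplicity $\psi(y)=\#\{x\in S : d(x,y)<4\epsilon\}$ uniformly in $y$, then integrate against $\eta$. The paper simply asserts that $\psi\le 4^d$ is ``straightforward'' for small $\epsilon$. You instead get $5^d$ times a distortion factor, from the disjoint balls $B_\epsilon(x)\subset B_{5\epsilon}(y)$, and hope to sharpen this to $4^d$.

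Do not try. The constant $4^d$ is false already for $d=2$. Around a point $y$ in $\R^2$, take $y$ itself, six points at radius $2.01\epsilon$ and angles $k\pi/3$, and twelve points at radius $3.95\epsilon$ and angles $\pi/12+k\pi/6$. The relevant distances are:
\begin{itemize}
\item the centre is $2.01\epsilon$ from the inner ring, and adjacent inner points are $2.01\epsilon$ apart;
\item adjacent outer points are $7.9\,\epsilon\sin(\pi/12)\approx 2.04\epsilon$ apart;
\item each inner point is at distance $\epsilon\,(2.01^2+3.95^2-2\cdot 2.01\cdot 3.95\cos(\pi/12))^{1/2}\approx 2.07\epsilon$ from its nearest outer points;
\item all remaining distances are larger.
\end{itemize}
So these $19$ points are $2\epsilon$-separated and lie in $B_{4\epsilon}(y)$. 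On a flat $2$-torus this configuration exists for every small $\epsilon$. Because of the slack, it also survives in normal coordinates on any surface. Taking $\eta=\delta_y$ gives $\sum_{x\in S}\eta(B_{4\epsilon}(x))=19>16=4^2$.

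Your fallback is therefore the correct statement: $\sum_{x\in S}\eta(B_{4\epsilon}(x))\le C$, with $C$ depending only on $d$ and the geometry of $M$. Your volume comparison proves this rigorously, using $\vol(B_s(z))\asymp s^d$ uniformly for small $s$ on the compact manifold $M$.

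This weaker form is all the paper needs. The lemma is used only in the proof of Lemma~\ref{nonrecRSlem}, inside a $\ll$ estimate. There, incidentally, $S$ is $\delta_n$-separated while the balls have radius $4\delta_n$, a ratio the lemma as stated does not cover. Your argument handles that case the same way: the disjoint balls $B_{\delta_n/2}(x)$ lie in $B_{9\delta_n/2}(y)$, giving a bound $\ll 9^d$.
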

\begin{proof}
Consider the function $\psi(y)=\#\{x\in S \;|\; y\in B_{4\epsilon}(x)\}$. Because $\dim(M)=d$, it is straightforward to show that
\begin{equation*}
\psi(y) \leq \#\{x\in S \;|\; x\in B_{4\epsilon}(y)\} \leq 4^d,
\end{equation*}
provided $\epsilon$ is small. Therefore
\begin{equation*}
\sum_{x\in S} \eta(B_{4\epsilon}(x)) \leq \int_M \psi(y) \d\eta(y) \leq 4^d.
\end{equation*}
\end{proof}

The following non-recurrence statement in Lemma \ref{nonrecRSlem} was shown in \cite{roussrandrec} (see also \cite{sausrec}), assuming additionally that the system exhibits annealed exponential mixing. In \cite{DDWequiv} it was shown that quenched (stretched) exponential mixing implies (stretched) exponential annealed mixing, if the constant $C(\omega)$ has polynomial tails - meaning that $\P(C>n)$ decays polynomially in $n$. However, for us, this is not the case, as we do not make any assumptions on $C(\omega)$. In fact, \cite[Example 1.6(a)]{DDWequiv} provides an example that is quenched exponential mixing, but the speed of annealed mixing is only polynomial.

\begin{lemma}\label{nonrecRSlem}
Assume $(T_{\omega})$ is quenched stretched exponentially mixing and assume that (APER) holds, then for $\nu$-a.e\ $(x,\omega)$ with $\underline{d}_{\mu_{\omega}}(x)>0$ it holds that
\begin{equation}\label{bigret}
\liminf_{r\rightarrow 0} \frac{\log \tau_r^{\omega}(x)}{|\log r|} >0.
\end{equation}
\end{lemma}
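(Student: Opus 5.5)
We follow the strategy of \cite{roussrandrec}, replacing annealed mixing by a fibrewise argument that uses only quenched mixing together with an ergodic-theorem control of the random constant $C$. Fix $\delta>0$ small, to be chosen relative to a lower bound $\underline{d}>0$ for $\underline{d}_{\mu_\omega}(x)$, and take the scales $r_n=e^{-n}$. For $a>0$ small set
\begin{equation*}
E_n=\{(x,\omega)\;|\; \tau^{\omega}_{r_n}(x)\le e^{a n}\}.
\end{equation*}
The claim \eqref{bigret} follows, by Borel--Cantelli along $r_n$ and monotonicity of $\tau_r$ in $r$ (sandwich $r_{n+1}\le r< r_n$), once we show $\sum_n \nu(E_n\cap G)<\infty$ for suitable sets $G$ of $\nu$-measure arbitrarily close to $1$.

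The set $G$ will be chosen as follows. Let $G=\{(x,\omega): C(\omega)\le C_0,\ \mu_\omega(B_{r}(x))\le r^{\underline{d}} \text{ for } r\le r_0\}$. We then decompose the return times $k\le e^{an}$ into short ones ($k\le p$, with $p$ a large fixed integer) and long ones ($p<k\le e^{an}$).

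\emph{Short returns.} By (APER), $\nu(\exists k\le p: T^k_\omega x=x)=0$. Since $T_\omega^k$ is continuous by (R'), we have $\nu(\exists k\le p: d(x,T^k_\omega x)<r)\to 0$ as $r\to 0$. This only yields smallness, not summability. We therefore handle it by showing instead that the short-return contribution vanishes eventually for a.e.\ point: for a.e.\ $(x,\omega)$, $\min_{k\le p}d(x,T^k_\omega x)>0$, so $\tau_{r}^{\omega}(x)>p$ for $r$ small. It thus suffices to bound $\tau_{r,p}$.

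\emph{Long returns.} For fixed $k>p$, write
\begin{equation*}
\mu_\omega(d(x,T_\omega^kx)<r_n)=\int F_n(x,T^k_\omega x)\d\mu_\omega,
\end{equation*}
where $F_n$ is a smooth approximation of $\one_{d(x,y)<r_n}$ with $\|F_n\|_{C^\kappa}\ll r_n^{-2\kappa}$. We apply the two-variable mixing estimate \eqref{asm:2em2}. The main term $\int\mu_{\sigma^k\omega}(B_{2r_n}(x))\d\mu_\omega(x)$ is handled by Lemma \ref{covermeasurelem}: cover $M$ by a $2r_n$-separated set to obtain a bound of order $4^d\max_z\mu_\omega(B_{4r_n}(z))\ll r_n^{\delta'}$ for some $\delta'>0$, using (VOL). The mixing error is $C'(\omega)e^{-\gamma' k^{\theta'}}r_n^{-2\kappa}$. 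This error is small only for $k\ge (c n)^{1/\theta'}$; for intermediate $k$ between $p$ and $n^{1/\theta'}$, we instead iterate: write $k=qp'+s$ and use that, under (R'), $T^k_\omega$ is Lipschitz with constant $\Lambda^k$. Close returns at time $k\le Cn^{1/\theta'}$ at scale $r_n$ then force closeness of earlier iterates at scale $\Lambda^{k}r_n$. We plan to treat these intermediate times by a Lipschitz/pigeonhole argument showing that they are not new obstacles, as in \cite{roussrandrec}. Summing over $k\le e^{an}$ gives
\begin{equation*}
\nu(E_n\cap G)\ll e^{an}\left(r_n^{\delta'}+C_0e^{-\gamma'(cn)^{\theta'}}r_n^{-2\kappa}\right)+(\text{intermediate}),
\end{equation*}
which is summable when $a<\delta'$.

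The main obstacle is that $C'(\omega)$ has no integrability. We handle this by integrating only over $\{C'\le C_0\}$ and letting $C_0\to\infty$; since \eqref{bigret} is an a.e.\ statement, a countable union over $C_0$ suffices. This requires that the estimate at time $k$ involve $C'(\omega)$ only at the base point, which \eqref{asm:2em2} provides. The genuinely delicate step is the intermediate range $p<k\lesssim n^{1/\theta'}$, where mixing is too weak against $r_n^{-2\kappa}$. There we plan to use the scale-doubling trick: a return at time $k$ at scale $r_n$ implies a return at scale $r_m$, $m=\lfloor \theta'' n\rfloor$, for which $k$ lies in the mixing range, after which we sum as above.
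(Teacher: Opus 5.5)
\textbf{There is a genuine gap: the intermediate time range $p<k\lesssim n^{1/\theta'}$ is not handled, and none of the remedies you sketch handles it.}

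Coarsening to $r_m$ with $m=\lfloor\theta'' n\rfloor$ only changes the constant in the threshold. Mixing at scale $e^{-m}$ still needs $\gamma' k^{\theta'}\gtrsim\kappa m\asymp n$, so times such as $k\approx\sqrt n$, or $k$ bounded, remain untreated. Suppose instead you let the coarser scale depend on $k$, say $e^{-ck^{\theta'}}$ with $c\kappa<\gamma'$. Then each such time contributes $\ll e^{-c'k^{\theta'}}$. But the resulting bound $\sum_{k>p}e^{-c'k^{\theta'}}$ for the intermediate part of $E_n$ does not decay in $n$, so ``summing as above'' over $n$ diverges. The Lipschitz/pigeonhole route fails too. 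Closeness of $x$ and $T^k_\omega x$ says nothing about returns at earlier times; it only propagates forward, and even then the scale $\Lambda^j r_n$ becomes useless once $j\gtrsim n/\log\Lambda$, while your intermediate range extends to $k\asymp n^{1/\theta'}$.

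The difficulty comes from your union bound itself. A fixed time $k$ enters every $E_n$ with $e^{an}\ge k$, so it is tested at infinitely many ever finer scales. For fixed $k$ the hypotheses give no rate at which $\mu_\omega(d(x,T^k_\omega x)<e^{-n})\to 0$.

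\textbf{The missing idea, which is the paper's argument, is to run Borel--Cantelli in the \emph{time} variable, with a scale attached to each time that decays only polynomially.} Take $\delta_k=k^{-4/a}$ and $A_k=\{y : d(y,T^k_\omega y)<\delta_k\}$. Then one gets $\mu_\omega(G\cap A_k)\ll C(\omega)\delta_k^{-2\kappa-d}e^{-\gamma k^{\theta}}+\delta_k^{a}$. This is summable because $\log(1/\delta_k)=O(\log k)$ is negligible against $k^{\theta}$. Hence $d(x,T^k_\omega x)\ge k^{-4/a}$ for $k\ge k_0(x)$, so a return to scale $r$ after time $k_0$ needs $k\ge r^{-a/4}$. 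The finitely many times $k<k_0$ are excluded by (APER) for small $r$, and there is no intermediate regime at all. You could rescue your scheme by adding a Borel--Cantelli in $k$ at scale $e^{-ck^{\theta'}}$ to dispose of the intermediate times, but then the $E_n$ bookkeeping becomes superfluous.

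Two further points.
\begin{enumerate}
\item The lemma assumes neither (VOL) nor (R'), yet your main-term bound $\max_{z\in M}\mu_\omega(B_{4r_n}(z))\ll r_n^{\delta'}$ uses (VOL). To rely only on $\underline{d}_{\mu_\omega}>0$ you must localise as the paper does:
\begin{itemize}
\item cover $G$ by a maximal separated set $S\subset G$;
\item apply two-function mixing to smoothed indicators of $B_{\delta}(z)$ and $B_{2\delta}(z)$ for each $z\in S$, paying $\#S\ll\delta^{-d}$ in the error term;
\item bound $\sum_{z\in S}\mu_\omega(B_{2\delta}(z))\,\mu_{\sigma^k\omega}(B_{4\delta}(z))\ll\delta^{a}$ via Lemma \ref{covermeasurelem}.
\end{itemize}
A global two-variable $F_n$ cannot see the restriction to $G$.
\item The truncation to $\{C'\le C_0\}$ is unnecessary. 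Fix a good $\omega$, so that $C(\omega)$ is just a constant, prove the statement $\mu_\omega$-a.e., and conclude $\nu$-a.e.\ by Fubini.
\end{enumerate}
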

\begin{proof}
Notice first that it is enough to show the non-instantaneous version
\begin{equation}\label{noninstrec}
\liminf_{p\rightarrow \infty} \liminf_{r\rightarrow 0} \frac{\log \tau_{r,p}^{\omega}(x)}{|\log r|} >0.
\end{equation}
Indeed, under assumption (APER), it holds that $\tau_{r,p}^{\omega}(x)=\tau_r^{\omega}(x)$ for $\nu$-a.e\ $(x,\omega)$ and $p\geq 1$, for small enough $r$.

Let $\omega$ be such that \eqref{asm:mem} holds and consider $X_+=\{\underline{d}_{\mu_{\omega}}>0\}$. Let $\epsilon>0$, then there is an $a>0$ such that
\begin{equation*}
\mu_{\omega}(X_+)\geq \mu_{\omega}(X_a) \geq \mu_{\omega}(X_+)-\epsilon,
\end{equation*}
where $X_a=\{\underline{d}_{\mu_{\omega}}\geq a\}$. For $\rho>0$ consider the set
\begin{align*}
G_{\rho} = \{ x\in X_a \;|\; \mu_{\omega}(B_r(x)) \leq r^a \; \forall r \leq \rho\}.
\end{align*}
Then, as $\rho \rightarrow 0$, it holds that $\mu_{\omega}(G_{\rho}) \rightarrow \mu_{\omega}(X_a)$ by definition of the lower dimension, therefore we can choose $\rho_0>0$ so small that
\begin{equation*}
\mu_{\omega}(G_{\rho_0}) \geq \mu_{\omega}(X_a) - \epsilon.
\end{equation*}
Denote $G=G_{\rho_0}$.

For $n\geq 1$ let $\delta_n = n^{-\frac{4}{a}}$ and 
\begin{equation*}
A_n = \{y \in M \; |\; T_{\omega}^n(y)\in B_{\delta_n}(y)\},
\end{equation*}
then for any $x\in M$ it holds that
\begin{equation*}
B_{\delta_n}(x) \cap A_n \subset B_{\delta_n}(x) \cap T_{\sigma^n \omega}^{-n} B_{2\delta_n}(x).
\end{equation*}

Let $\iota$ be a $C^{\kappa}$ smooth map with
\begin{equation*}
\iota(y)=1 \;\text{on}\; B_{\delta_n}(x) \quad \text{and} \quad \iota(y)=0 \;\text{on}\; B_{2 \delta_n}(x),
\end{equation*}
and $||\iota||_{C^{\kappa}} \ll \delta_n^{-\kappa}$. Likewise define $\tilde{\iota}$ for $2\delta_n$ instead of $\delta_n$. Using stretched exponential mixing, we obtain
\begin{align*}
\mu_{\omega}(B_{\delta_n}(x) \cap A_n) & \leq \mu_{\omega} (B_{\delta_n}(x) \cap T_{\sigma^n \omega}^{-n} B_{2\delta_n}(x))\\
&\leq \int_M \iota \cdot \tilde{\iota} \circ T_{\omega}^n \d\mu_{\omega}\\
&\ll C(\omega) \delta_n^{-2\kappa} e^{-\gamma n^{\theta}} + \mu_{\omega}(B_{2\delta_n}(x)) \mu_{\sigma^n \omega}(B_{4\delta_n}(x)).
\end{align*}

Let $S\subset G$ be a maximal $\delta_n$ separated set. Then $\#S \ll \delta_n^{-d}$. Since $(B_{\delta_n}(x))_{x\in S}$ is a cover of $G$, for $n$ big enough, we have
\begin{align*}
\mu_{\omega}(G\cap A_n) & \leq \sum_{x\in S} \mu_{\omega}(B_{\delta_n}(x) \cap A_n) \\
&\ll C(\omega) \delta_n^{-2\kappa-d} e^{-\gamma n^{\theta}} + \sum_{x\in S} \mu_{\omega}(B_{2\delta_n}(x)) \mu_{\sigma^n \omega}(B_{4\delta_n}(x))\\
&\ll C(\omega) \delta_n^{-2\kappa-d} e^{-\gamma n^{\theta}} + \delta_n^a,
\end{align*}
where in the last step we used the definition of $G$ and Lemma \ref{covermeasurelem}. 

This bound is summable over $n$, therefore the Borel--Cantelli Lemma implies that, for $\mu_{\omega}$ almost every $x\in G$, there is an $n(x)$ such that $x\not \in A_n$ for $n\geq n(x)$. Therefore
\begin{equation*}
d(T_{\omega}^n (x), x)\geq n^{-\frac{4}{a}} \quad \forall n\geq n(x),
\end{equation*}
and it follows that
\begin{align*}
\liminf_{p\rightarrow \infty} \liminf_{r\rightarrow 0} \frac{\log \tau_{r,p}^{\omega}(x)}{|\log r|}& = \liminf_{p\rightarrow \infty} \liminf_{n\rightarrow \infty} \frac{\log \tau_{n^{-\frac{4}{a}},p}^{\omega}(x)}{\frac{4}{a} \log n}\\
&\geq \frac{a}{4} \lim_{n\rightarrow\infty} \frac{\log n}{\log n} >0.
\end{align*}
Letting $\epsilon \rightarrow0$ proves \eqref{noninstrec}.
\end{proof}

\begin{remark}
Note that if assumption (VOL) is satisfied, then $\P$-a.e.\ fibre measure $\mu_{\omega}$ satisfies $\underline{d}_{\mu_{\omega}}(x)\geq d$ for $\vol$-a.e.\ $x$. Indeed, this holds because
\begin{equation*}
\liminf_{r\rightarrow 0} \frac{|\log \mu_{\omega}(B_r(x))|}{|\log r|} \geq \liminf_{r\rightarrow 0} \frac{|\log (\esssup_{(x',\omega')} \rho_{\omega'}(x')) + \log \vol(B_r(x))|}{|\log r|} = d,
\end{equation*}
for the first inequality we used that $|\log|$ is decreasing on $(0,1)$.
\end{remark}

\begin{definition}[Quenched slowly recurrent point]\label{qsrpdef}
A point $x^*\in M$ is called quenched slowly recurrent if there is a function $C:\Omega\rightarrow(0,\infty)$, for every $A, B>0$, there exists $R_0=R_0(x^*,\omega,A,B)>0$ such that for $\P$-a.e.\ and for all $0<r<R_0$ and positive integers $1\leq n\leq |\log r|^B$, it holds that
\begin{equation}\label{qsrp}
\mu_{\omega} \left( B_r(x^*) \cap T_{\sigma^n \omega}^{-n}(B_r(x^*)) \right) \leq C(\omega) r^d |\log r|^{-A}.
\end{equation}
\end{definition}

\begin{lemma}
Suppose $(T_{\omega})$ is quenched stretched exponentially mixing, and assumptions (APER) and (VOL) hold, then $\vol$-a.e.\ point $x^*$ is quenched slowly recurrent.
\end{lemma}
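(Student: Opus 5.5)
We need to show that for $\vol$-a.e.\ $x^*$, for $\P$-a.e.\ $\omega$, all small $r$ and all $1\le n\le|\log r|^B$, the bound \eqref{qsrp} holds. The plan is to split the range of $n$ at a threshold $n_0(r)=|\log r|^{\beta}$ with small $\beta>0$. Large $n$ are handled by mixing through Lemma \ref{ballmixlem}. Small $n$ are handled by the non-recurrence statement Lemma \ref{nonrecRSlem}, combined with a Borel--Cantelli argument in $x^*$.

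\textbf{Large $n$}, $n_0(r)\le n\le|\log r|^B$. Apply Lemma \ref{ballmixlem} with two balls of radius $r$ and times $0,n$. This gives
\[
\mu_{\omega}(B_r(x^*)\cap T^{-n}_{\sigma^n\omega}B_r(x^*)) \le \mu_{\omega}(B_r(x^*))\,\mu_{\sigma^n\omega}(B_r(x^*)) + C(\omega)\, r^{\frac{2(d-1)\kappa}{2\kappa+1}}\, e^{-\frac{\gamma}{2\kappa+1}n^{\theta}}.
\]
By (VOL), the first term is $\ll r^{2d}$, which is far below $r^d|\log r|^{-A}$. The error term is the real issue, because the power of $r$ is worse than $r^d$. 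I would therefore not use the ball lemma as stated. Instead I would smooth at scale $\epsilon=r^{2}$ and use \eqref{asm:mem} directly, paying $\epsilon^{-2\kappa}e^{-\gamma n^\theta}$. This error is only small if $n^\theta\gg|\log r|$. So the mixing range is really $n\ge |\log r|^{2/\theta}$, and the threshold becomes $n_0(r)=|\log r|^{2/\theta}$, which is still polylogarithmic. The constant $C(\omega)$ in the definition absorbs the random mixing constant.

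\textbf{Small $n$}, $n\le n_0(r)$, polylogarithmic in $1/r$. Here I would average over $x^*$ instead of fixing it. Cover $M$ by a maximal $r$-separated set $S_r$. On the set $\{x^* : d(T^n_\omega y,y)\ge 2r \text{ for all } y\in B_r(x^*)\}$, the intersection $B_r(x^*)\cap T^{-n}_{\sigma^n\omega}B_r(x^*)$ is empty. So it suffices to bound the measure of $E_{n,r}=\{y: d(T_\omega^n y,y)<2r\}$. By Lemma \ref{nonrecRSlem} and (APER), $\nu$-almost every $(y,\omega)$ has $\tau^\omega_{2r}(y)\ge r^{-c}$ for small $r$. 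Hence $\mu_\omega(\bigcup_{n\le n_0(r)}E_{n,r})\to0$, and I would try to make this quantitative, at rate $|\log r|^{-A-1}$. A Markov/maximal-function argument over dyadic $r=2^{-k}$ then gives a full-measure set of $x^*$ for which $\mu_\omega(B_r(x^*)\cap\bigcup_{n\le n_0}E_{n,r})\le r^d|\log r|^{-A}$. This uses that $\sum_k k^{-2}<\infty$ after choosing the rate with an extra log power. The step from $\omega$-a.e.\ to a statement uniform in $\P$-a.e.\ $\omega$ goes through Fubini on $\nu$ and (VOL), which give the density bound $\vol\asymp\mu_\omega$ up to the esssup constant.

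\textbf{Main obstacle.} The main difficulty is the quantitative rate in the small-$n$ step. Lemma \ref{nonrecRSlem} is only qualitative, so I would redo its proof with explicit sets. For each fixed $n\le n_0(r)$, (APER) and continuity give $\vol(E_{n,r})\to0$, but with no rate. For a rate, I would use the structure of that proof: smooth $F(y,T^n y)$ via the two-variable mixing lemma \eqref{asm:2em2} for $n$ in an intermediate range. For bounded $n$, I would use that $T_\omega^n$ is $C^\kappa$ with fixed points of zero measure; (R') plus (APER) should give $\vol(E_{n,r})\ll r^{c}$ via a transversality/covering estimate. If no uniform rate is available, I would allow $R_0$ to depend on $x^*$, which the definition permits, and use Lebesgue density points instead.
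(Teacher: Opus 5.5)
\textbf{There is a genuine gap in your small-$n$ step.} Your large-$n$ step is fine. Smoothing at scale $r^2$ and applying two-point mixing directly gives, for $n\ge|\log r|^{2/\theta}$, the bound $\ll r^{2d}+C(\omega)r^{-4\kappa}e^{-\gamma n^{\theta}}\le C(\omega)r^d|\log r|^{-A}$, and this even holds for every $x^*$.

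The problem is the range $n\le n_0(r)$. Averaging over $x^*$, then Markov, then Borel--Cantelli over dyadic radii forces you to prove $\mu_\omega(\{y:d(T^n_\omega y,y)<2r\})\ll|\log r|^{-A-2}$ uniformly in $n\le n_0(r)$, including bounded $n$. Nothing in the hypotheses gives this:
\begin{itemize}
\item Mixing says nothing for bounded $n$.
\item (APER) and Lemma~\ref{nonrecRSlem} are only qualitative in $y$: the radius below which $\tau^{\omega}_{2r}(y)\ge r^{-c}$ depends on $y$, so integrating over $y$ gives $\mu_\omega(\bigcup_{n\le n_0}E_{n,r})\to0$ with no rate.
\item The transversality bound $\vol(E_{n,r})\ll r^c$ is not available. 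The hypotheses do not exclude a fibre map with a fixed point at which $y\mapsto d(y,T_\omega y)$ vanishes to infinite order. Then $\vol(E_{1,r})$ decays only like a fixed negative power of $|\log r|$, or slower, so the Markov loss $|\log r|^{A}$ for arbitrary $A$ cannot be summed.
\end{itemize}
The fallback via Lebesgue density points gives no rate at all, so it cannot produce the factor $|\log r|^{-A}$ for every $A$.

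\textbf{The missing idea is to use non-recurrence at the centre $x^*$, not at the points $y$ of the ball.} This is exactly where the $x^*$-dependence of $R_0$ is harmless.
\begin{itemize}
\item By Lemma~\ref{nonrecRSlem}, $\tau^{\omega}_{2\sqrt r}(x^*)\ge(2\sqrt r)^{-c}>\sqrt{|\log r|}$ for small $r$. Hence $d(T^n_\omega x^*,x^*)\ge2\sqrt r$ for all $n\le\sqrt{|\log r|}$.
\item (R') gives $d(T^n_\omega y,T^n_\omega x^*)\le\|(T_\omega)\|_{C^{\kappa}}^n r\le\sqrt r$ for $y\in B_r(x^*)$.
\item Therefore $B_r(x^*)\cap T^{-n}_{\sigma^n\omega}B_r(x^*)=\emptyset$, and no rate is needed.
\end{itemize}

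This propagation only works while $\|(T_\omega)\|_{C^{\kappa}}^n r\le\sqrt r$, i.e.\ for $n\lesssim|\log r|$. So your split at $|\log r|^{2/\theta}$ must be lowered, for instance to $\sqrt{|\log r|}$. Your fine-scale pointwise bound does not cover the gap in between.

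On the range $\sqrt{|\log r|}\le n\le|\log r|^B$, averaging over $x^*$ does work, because you can apply \eqref{asm:2em2} with a cutoff at the coarse scale $\hat r=|\log r|^{-A'}$ rather than at scale $r$. This gives $\mu_\omega(d(y,T^n_\omega y)<2r)\ll\hat r^{d}+C'(\omega)\hat r^{-\kappa}e^{-\gamma' n^{\theta'}}\ll C'(\omega)|\log r|^{-A'}$. Then $\int\mu_\omega(B_r(x)\cap T^{-n}_{\sigma^n\omega}B_r(x))\,d\vol(x)\ll r^d\,\mu_\omega(d(y,T^n_\omega y)<2r)$. Markov's inequality in $x$ and Borel--Cantelli over $r=2^{-j}$ finish the argument. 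This is the paper's proof.
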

\begin{proof}
Let $\omega$ be such that \eqref{asm:mem} holds, and let $A, B>0$. Without loss of generality, we may assume $B>\frac{1}{2}$.
 
Because of \eqref{asm:volcontrol}, there is a constant $K>0$ such that
\begin{equation*}
\mu_{\omega'}(B_r(x)) < Kr^d \quad \forall\omega'\in\Omega, x\in M,
\end{equation*}
for $r$ small enough. If $k\geq \sqrt{|\log r|}$ let $\hat{r}=|\log r|^{-A}$ and $F:M\times M \rightarrow [0,1]$ be a $C^{\kappa}$ smooth function with 
\begin{equation*}
F(x,y)=1 \quad \text{if } d(x,y)<\hat{r}, \quad F(x,y)=0 \quad \text{if } d(x,y)>2 \hat{r} \quad \text{and} \quad \|F\|_{C^{\kappa}} \leq \hat{r}^{-\kappa}.
\end{equation*}
Then, using \eqref{asm:2em2}
\begin{equation}\label{sreq}
\begin{aligned}
&\mu_{\omega}(d(x, T_{\omega}^k x) < r) \leq \mu_{\omega}(d(x, T_{\omega}^k x) < \hat{r}) \leq \int_M F(x, T_{\omega}^k x) \d\mu_{\omega}\\
& \ll \hat{r}^{d} + C(\omega) \hat{r}^{-\kappa} e^{-\gamma k^{\theta}} \ll C(\omega) |\log r|^{-A},
\end{aligned}
\end{equation}
for $r$ small enough.

For $j\geq 1,i\geq \sqrt{j}$ let 
\begin{equation*}
F^{\omega}_{j,i} (x) = \mu_{\omega}(B_{2^{-j}}(x) \cap T_{\sigma^i \omega}^{-i} B_{2^{-j}}(x)),
\end{equation*}
then 
\begin{align*}
\int_M F^{\omega}_{j,i} (x) \d\vol(x)& = \int_M \int_M 1_{[0,2^{-j})} (d(x,y)) \cdot 1_{[0,2^{-j})} (d(x,T_{\omega}^i y)) \d\vol(x) \d\mu_{\omega}(y) \\
& \leq \int_M \int_M 1_{[0,2^{-j})} (d(x,y)) \cdot 1_{[0,2^{-j+1})} (d(y,T_{\omega}^i y)) \d\vol(x) \d\mu_{\omega}(y) \\
& \leq \int_M \vol(B_{2^{-j}}(y)) \cdot 1_{[0,2^{-j+1})} (d(y,T_{\omega}^i y)) \d\mu_{\omega}(y) \\
& \leq K 2^{-jd} \int_M  1_{[0,2^{-j+1})} (d(y,T_{\omega}^i y)) \d\mu_{\omega}(y),
\end{align*}
using \eqref{sreq} with $A+B+4$ instead of $A$ we obtain
\begin{equation*}
\int_M F^{\omega}_{j,i} (x) \d\vol(x) \leq C(\omega) 2^{-jd} j^{-A-B-3},
\end{equation*}
for $j$ big enough. Applying Markov's inequality yields
\begin{equation*}
\vol (F^{\omega}_{j,i} > C(\omega) 2^{-jd} j^{-A}) < j^{-3-B},
\end{equation*}
summing the above yields
\begin{equation*}
\vol (\exists \sqrt{j} \leq i \leq j^B \;|\; F^{\omega}_{j,i} > C(\omega) 2^{-jd} j^{-A}) < j^{-3}.
\end{equation*}
The above bound is summable, hence by the Borel--Cantelli Lemma, for $\vol$-a.e.\ $x$, there is a $j_0=j_0(x,\omega,A,B)$ such that
\begin{equation*}
F^{\omega}_{j,i} \leq C(\omega) 2^{-jd} j^{-A} \quad \forall j>j_0, \sqrt{j} \leq i\leq j^B.
\end{equation*}
Clearly this implies that
\begin{equation}\label{srbig}
\mu_{\omega} \left( B_r(x) \cap T_{\sigma^n \omega}^{-n}(B_r(x)) \right) \leq C(\omega) r^d |\log r|^{-A} \quad \forall r<2^{-j_0}, \sqrt{|\log r|} \leq n\leq |\log r|^B.
\end{equation}

Now assume in addition that $x,\omega$ are such that \eqref{bigret} holds. Then, in particular, there is a $\tilde{R}_0=\tilde{R}_0(x,\omega)$ such that 
\begin{equation*}
\tau_{2 \sqrt{r}}^{\omega}(x)> \sqrt{|\log r|} \quad \forall r<\tilde{R}_0.
\end{equation*} 
But then, if $y\in B_r(x)$ and $n\leq \sqrt{|\log r|}$, it follows that
\begin{align*}
d(x,T^n_{\omega} y) \geq d(x, T^n_{\omega} x) - d( T^n_{\omega} x, T^n_{\omega} y) \geq 2\sqrt{r} - \|(T_{\omega})\|_{C^{\kappa}}^n r \geq \sqrt{r},
\end{align*}
making $\tilde{R}_0$ smaller if necessary. In particular, it holds that
\begin{equation*}
B_r(x) \cap T_{\sigma^n \omega}^{-n}(B_r(x))= \emptyset \quad \forall r<\tilde{R}_0, n\leq \sqrt{|\log r|}.
\end{equation*}
Combining this with \eqref{srbig} completes the proof with $R_0=\min(\tilde{R}_0, 2^{-j_0})$.
\end{proof}

\begin{lemma}\label{badrecseqlem}
Let $x^*$ be a quenched slowly recurrent point then, for $\P$-a.e.\ $\omega$, there is a bad set $\B=\B(x^*,\omega,(r_l)) \subset \N$ such that, for $t,A,B>0$, there is an $L=L(x^*,\omega,(r_l),t,A,B)\geq 1$ such that for $l\geq L$ and $j \leq \fceil{t}{\mu(B_{r_l}(x^*))}, j \not \in \B$ it holds that 
\begin{equation}\label{goodindreclemclaim}
\mu_{\sigma^j \omega} \left( B_{r_l}(x^*) \cap T_{\sigma^{n+j} \omega}^{-n}(B_{r_l}(x^*)) \right) \leq C(\sigma^j\omega) r_l^d |\log r_l|^{-A} \quad \forall n\leq |\log r_l|^B.
\end{equation}
\end{lemma}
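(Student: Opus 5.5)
The plan is to apply the ergodic-theoretic sparse-set principle of Lemma~\ref{badseqclem1} to the random radius $R_0$ from Definition~\ref{qsrpdef}. By the quenched slow recurrence of $x^*$, for every $A,B>0$ and $\P$-a.e.\ $\omega'$, the estimate \eqref{qsrp} holds at the fibre $\omega'$ for all $0<r<R_0(x^*,\omega',A,B)$ and all $n\leq |\log r|^B$. Applied at $\omega'=\sigma^j\omega$, this is exactly \eqref{goodindreclemclaim}, provided $r_l<R_0(x^*,\sigma^j\omega,A,B)$. So the task reduces to showing that, outside a density zero set $\B$ of indices $j$, the radius $R_0(\sigma^j\omega)$ is not too small compared with $r_l$, uniformly for $j\leq \fceil{t}{\mu(B_{r_l}(x^*))}$.

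First I fix a countable family of parameters, say $A,B\in\N$. This suffices, since \eqref{goodindreclemclaim} is monotone in $A$ and $B$: a larger $A$ gives a stronger bound, and a larger $B$ gives more values of $n$. For each pair $(A,B)$ I apply Lemma~\ref{badseqclem1} to the function $C_{A,B}:=1/R_0(x^*,\cdot,A,B)$. I use a threshold sequence $\Lambda_n$ defined blockwise exactly as in the proof of Lemma~\ref{badseqmixlem}: $\Lambda_n = 1/r_l$ whenever $\fceil{l-1}{\mu(B_{r_{l-1}}(x^*))} < n \leq \fceil{l}{\mu(B_{r_l}(x^*))}$. After monotonising, $\Lambda_n\to\infty$ because $r_l\searrow 0$. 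The lemma then yields a density zero set $\B_{A,B}$ with $R_0(\sigma^n\omega)\geq r_{l'}$ for $n\notin\B_{A,B}$, where $l'$ is the block containing $n$. Now let $l\geq L(t):=\lceil t\rceil$ and $j\leq\fceil{t}{\mu(B_{r_l}(x^*))}$. Then the block index $l'$ of $j$ satisfies $l'\leq l$, so $r_l\leq r_{l'}\leq R_0(\sigma^j\omega)$, and \eqref{goodindreclemclaim} follows. To get the strict inequality, I would shrink $R_0$ slightly, or replace $\Lambda_n$ by $2\Lambda_n$.

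Next I combine the countably many sets $\B_{A,B}$ into a single density zero set $\B$, independent of $(t,A,B)$. I use the diagonal construction from the proof of Lemma~\ref{badseqmixlem}. Enumerate the pairs as $q=1,2,\dots$ and choose $N_q$ so that $\#(\B_{q'}\cap[0,n])\leq q^{-2}n$ for all $n\geq N_q$ and $q'\leq q$. Then set $\B=\bigcup_q \B_q\cap[N_q,\infty)$; this has density zero. For a fixed pair $q$, the indices $j<N_q$ that lie in $\B_q$ are finitely many. I handle them by enlarging $L$: since $R_0(\sigma^j\omega)>0$ for each of these finitely many $j$, taking $L$ large enough that $r_l<\min_{j<N_q} R_0(\sigma^j\omega)$ for $l\geq L$ covers them. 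This is why $L$ depends on $(A,B)$ as well as on $t$.

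The main obstacle I expect is measurability and genericity. $R_0$ must be chosen as a measurable function of $\omega'$, for instance the supremum of admissible radii, and the whole construction must avoid an exceptional null set of $\omega'$. For the first point, the supremum of admissible radii is measurable since \eqref{qsrp} only involves countably many conditions after restricting to rational $r$, up to continuity in $r$. For the second, $\sigma$ preserves $\P$, so for $\P$-a.e.\ $\omega$ the whole orbit $\sigma^j\omega$ avoids the countable union of exceptional sets. This orbit-avoidance, together with the countable reduction in $(A,B)$, is exactly what the proof needs.
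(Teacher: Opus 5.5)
Your proposal is correct and follows the paper's proof essentially step for step. The paper also reduces the claim to $R_0(\sigma^j\omega)\geq r_l$. It then builds the sparse set from the ergodic theorem applied to $U_l=\{R_0<r_l\}$, with blocks ending at $\lceil l/\mu(B_{r_l}(x^*))\rceil$ and $L(t)=\lceil t\rceil$; it redoes the construction of Lemma~\ref{badseqclem1} inline rather than quoting it. Finally it merges countably many $(A,B)$ by a diagonal union and absorbs the finitely many early indices into $L$. Your version of that union, taken from Lemma~\ref{badseqmixlem}, even avoids the paper's assumption that the sets $\B(\omega,A,B)$ are nested.

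One small slip: replacing $\Lambda_n$ by $2\Lambda_n$ goes the wrong way, since it only gives $R_0(\sigma^j\omega)\geq r_{l'}/2$. For the strict inequality you want $\Lambda_n/2$, which is the same as your other option of running the argument with $R_0/2$.
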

\begin{proof}
Denote $\epsilon_l = \mu(B_{r_l}(x^*))$. For ease of notation, we will omit dependence on $x^*$ and $(r_l)$.

(i) It is in fact enough to show the claim for fixed $A, B$, meaning for fixed $A, B$ we construct a bad set $\B(\omega, A,B)\subset \N$ of density $0$ such that, for $t>0$, there is an $\tilde{L}(\omega,t, A, B)\geq 1$ and \eqref{goodindreclemclaim} holds. Indeed, once we have constructed such sets, we can find $A_i, B_i ,N_i \nearrow \infty$ and $\delta_i\searrow 0$ such that
\begin{equation*}
\# ([1,N] \cap \B(\omega,A_i,B_i)) \leq \delta_i N \quad \forall N >N_i, i\geq 1,
\end{equation*}
then the claim of the Lemma is satisfied for the set 
\begin{equation*}
\B(\omega) = \bigcup_{i\geq 1} [N_i,N_{i+1}] \cap \B(\omega,A_i,B_i).
\end{equation*}
Since the sets $\B(\omega,A,B)$ can be assumed to be increasing in $A,B$, for $N\in\N$, say $N_i<N\leq N_{i+1}$, it holds that 
\begin{equation*}
\# ([1,N] \cap \B(\omega)) \leq \# ([1,N] \cap \B(\omega,A_i,B_i)) \leq \delta_i N,
\end{equation*}
therefore $\B(\omega)$ has density $0$.

Now for $t,A,B>0$, let $i_0$ be minimal so that $A_{i_0}\geq A$ and $B_{i_0}\geq B$. Using slow recurrence for the measures $\mu_{\omega},\dots, \mu_{\sigma^{N_{i_0}}\omega}$, we can find an $L'\geq 1$ so that, for $j\leq N_{i_0}$ and $l > L'$ it holds that
\begin{equation*}
\mu_{\sigma^j \omega} \left( B_{r_l}(x^*) \cap T_{\sigma^{n+j} \omega}^{-n}(B_{r_l}(x^*)) \right) \leq C(\sigma^j\omega) r_l^d |\log r_l|^{-A} \quad \forall n\leq |\log r_l|^B.
\end{equation*}
Let $L=L(\omega,t,A,B)=\max(L',\tilde{L}(\omega,t, A_{i_0}, B_{i_0}))$. If $l>L$ and $N_{i_0} < j\leq \lceil t \epsilon_{l}^{-1} \rceil, j \not\in\B$, then by construction, since the $\B(\omega,A,B)$ are increasing, it also holds that $j\not\in \B(\omega,A_{i_0},B_{i_0})$ and therefore
\begin{equation*}
\mu_{\sigma^j \omega} \left( B_{r_l}(x^*) \cap T_{\sigma^{n+j} \omega}^{-n}(B_{r_l}(x^*)) \right) \leq C(\sigma^j \omega) r_l^d |\log r_l|^{-A_{i_0}} \quad \forall n\leq |\log r_l|^{B_{i_0}}.
\end{equation*}
So \eqref{goodindreclemclaim} holds in both cases.

(ii) For the rest of the proof, fix $A,B>0$, and omit dependence on $A,B$ for simplicity.

Let $R_0$ be as in the definition of quenched slow recurrence, then \eqref{goodindreclemclaim} follows once we show
\begin{equation}\label{gidrad}
R_0(\sigma^j \omega) \geq r_l.
\end{equation}
Denote $U_l=\{\omega \;|\; R_0(\omega) < r_l \}$, by the Ergodic Theorem, for $\P$-a.e.\ $\omega$ there is an $J_l\geq 1$ such that 
\begin{equation*}
\#\{j\leq J \;|\; \sigma^j\omega\in U_l\} \leq 2 \P(U_l) J \quad \forall J\geq J_l,
\end{equation*}
and denote $\B_l=\{j\geq 1\;|\; \sigma^j\omega\in U_l\}$. Similarly to the proof of Lemma \ref{badseqmixlem}, we define $N_l$ inductively as
\begin{equation*}
N_1=J_1 \quad \text{and} \quad N_{l+1} = \max \left(\fceil{l}{\mu(B_{r_l}(x^*))}, J_{l+1}, \sum_{j = 1}^l \fceil{N_j}{\P(U_{l+1})} \right),
\end{equation*}
and $\B=[0,N_1) \cup \bigcup_{l \geq 1} \left( \B_l \cap [N_l, N_{l+1}) \right)$. The proof will be complete once we show (1) $\B$ has density $0$ and (2) for every $t>0$ there is an $L(t)\geq 1$ such that \eqref{goodindreclemclaim} holds.

(1) Let $\epsilon>0$ and $l_0\geq 1$ be so big that
\begin{equation*}
\P(U_l)<\epsilon \quad \forall l\geq l_0 - 1,
\end{equation*}
then for $N\geq N_{l_0+1}$, say $N_l \leq N <N_{l+1}$ for some $l\geq l_0+1$,it holds that
\begin{align*}
\# \left(\B \cap [1,N]\right) & = N_1 + \sum_{l'=1}^{l-1} \# \left( \B_{l'} \cap [N_{l'}, N_{l'+1}) \right) + \# \left( \B_l \cap [N_l, N] \right)\\
&\leq N_1 + 2 \sum_{l'=1}^{l-1} \P(U_{l'}) N_{l'+1} + \P(U_l) N\\
&\leq 2 \sum_{j=1}^{l-1} N_{j} + 2 \P(U_{l-1}) N_l + \P(U_l) N\\
&\leq 2 (\P(U_l) + \P(U_{l-1})) N \\
&\leq 4 \epsilon N.
\end{align*}

(2) For $t>0$ and $L(t)=\lceil t \rceil$, it holds that $N_l \geq \fceil{t}{\mu(B_{r_l}(x^*))}$ for all $l\geq L(t)$. If $j\leq \fceil{t}{\mu(B_{r_l}(x^*))}$ and $j\not \in \B$, say if $N_{l'} \leq j < N_{l'+1}$ for some $l' \leq l$, then it holds
\begin{equation*}
R(\sigma^j \omega) > r_{l'}\geq r_l,
\end{equation*}
and \eqref{goodindreclemclaim} follows.
\end{proof}

\begin{proof}[Proof of Theorem \ref{pltthm}]
The proofs of \eqref{hitconthmclaim} and \eqref{retconthmclaim} are almost identical. We will focus the main part of the proof, steps (i) through (v), on proving \eqref{hitconthmclaim}, and highlight the differences for the proof of \eqref{retconthmclaim}, in step (vi). 

(i) Using \eqref{sumpoiscon}, is equivalent to show that
\begin{equation}\label{poisconclaim}
\left(S_{\fceil{t}{\mu(B_{r_l}(x^*))},\omega} (1_{B_{r_l}(x^*)}) \right)_{t>0} \xRightarrow{\mu_{\omega}} \mathcal{P} \quad \as{l}, \text{for $\P$-a.e.\ $\omega$},
\end{equation}
where $\mathcal{P}=(\mathcal{P}_t)$ is a standard Poisson process, for all sequences $r_l\searrow 0$. For the rest of the proof, we fix a sequence $(r_l)$ and show \eqref{poisconclaim}.

We will prove this by showing that all possible moments of the terms on the left-hand side converge to the corresponding moments of the Poisson process. We will make this statement more explicit later in (v).

(ii) We first compute the expectation of a single term on the left-hand side of \eqref{poisconclaim}. For $t>0$ we have to compute
\begin{equation*}
\int_M S_{\fceil{t}{\mu(B_{r_l}(x^*))},\omega} (1_{B_{r_l}(x^*)}) \d\mu_{\omega} = \sum_{j=1}^{\fceil{t}{\mu(B_{r_l}(x^*))}} \mu_{\sigma^j \omega}(B_{r_l}(x^*)).
\end{equation*}
Heuristically, here we would like to apply the Ergodic Theorem with the function $\omega \mapsto \mu_{\sigma^j \omega}(B_{r_l}(x^*))$ in order to conclude that the above converge to $t$. However, this cannot be done directly, as the function in question also varies with $l$.

We will use the following idea: Suppose that every density $\rho_{\omega}$ was Lipschitz continuous with a uniform Lipschitz constant $L>0$, then we would have
\begin{equation*}
|\mu_{\omega'} (B_{r_l}(x^*)) - \rho_{\omega}(x^*) \vol(B_{r_l}(x^*)) | \leq L r_l \vol(B_{r_l}(x^*)),
\end{equation*}
now we can apply the Ergodic Theorem with $\omega\mapsto \rho_{\omega}(x^*)$ and, considering that $\int_{\Omega} \rho_{\omega}(x^*) \d\P(\omega) = \rho(x^*)$ for almost all points $x^*$, the desired asymptotics follow. Of course, uniformity of the Lipschitz constant is too strong an assumption, we will, however, use a similar idea.

(iii) Using the assumption (VOL), for $\vol$-a.e.\ $x^*$, the density $\rho_{\omega'}$ is continuous at $x^*$ for $\P$-a.e.\ $\omega'$. Additionally choose $x^*$ such that $\rho(x^*)>0$, where $\rho=\frac{\d\mu}{\d\vol}$ is the density of the stationary measure. Therefore, for $\epsilon>0$, let $\delta(x^*,\omega,\epsilon)>0$ be so that 
\begin{equation*}
d(x^*,y)<\delta(x^*,\omega,\epsilon) \implies |\rho_{\omega}(x^*) - \rho_{\omega}(y)| <\epsilon.
\end{equation*}

For $N\geq 1$ let $\delta'(x^*,N)>0$ be so small that
\begin{equation*}
\P\left(\omega' \;\left|\; \delta\left(x^*,\omega',\frac{1}{N}\right) \leq \delta'(x^*,N)\right.\right) < \frac{1}{N} 
\end{equation*}
and denote this set by $H(x^*,N) = \{\omega' \;|\; \delta(x^*,\omega',\frac{1}{N}) \leq \delta'(x^*,N)\}$. For every $N$ and $\P$-a.e.\ $\omega$, by the Ergodic Theorem, it follows that there is a $J(x^*,\omega,N)\geq 1$ such that
\begin{equation}\label{jnumber}
\#\{j\leq J \;|\; \sigma^j\omega\in H(x^*,N)\} = \sum_{j=1}^{J} 1_{H(x^*,N)}(\sigma^j\omega) \leq \frac{2J}{N} \quad \forall J > J(x^*,\omega,N).
\end{equation}
Now, for $t, N, x^*,\omega$ let $L=L(x^*,\omega,N,t)\geq 1$ be so that $\fceil{t}{\mu(B_{r_L}(x^*))} > J(x^*,\omega,N)$ and $r_L< \delta'(x^*,N)$. We may assume that $L(x^*,\omega,N,t)\nearrow \infty$ as $N\rightarrow\infty$. Finally, for $l\geq 1$, let 
\begin{equation*}
N(x^*,\omega,t,l) = \max(N \;|\; L(x^*,\omega,N,t) < l).
\end{equation*}

Let us first collect some basic properties of $N_l=N(x^*,\omega,t,l)$, for the moment omitting $x^*,\omega,t$ from our notation.
\begin{itemize}
\item $N_l\nearrow \infty$ as $l\rightarrow \infty$, since each $L(N)$ is a finite number, this is obvious.
\item $r_l < \delta'(N_l)$, by construction it holds that $l>L(N_l)$ so the claim follows from the definition of $L(N)$.
\item $\fceil{t}{\mu(B_{r_l}(x^*))} > J(N_l)$, as above, this follows from $l>L(N_l)$.
\end{itemize}
Denote $\mathcal{B}(l,t,x^*,\omega)=\{j\leq \fceil{t}{\mu(B_{r_l}(x^*))} \;|\; \sigma^j\omega\in H(N_l)\}$. Using \eqref{jnumber} we obtain
\begin{align*}
\sum_{j=1}^{\fceil{t}{\mu(B_{r_l}(x^*))}} \mu_{\sigma^j \omega}(B_{r_l}(x^*)) = \sum_{\substack{ 1 \leq j \leq \fceil{t}{\mu(B_{r_l}(x^*))} \\ j\not \in \mathcal{B}(l,t,x^*,\omega)}} \mu_{\sigma^j \omega}(B_{r_l}(x^*)) + O\left(N_l^{-1} \esssup_{\omega'} \frac{\mu_{\omega'}(B_{r_l}(x^*)}{\mu(B_{r_l}(x^*))} \right).
\end{align*}
By choice of $x^*$, the quantity $\frac{\mu_{\omega'}(B_{r_l}(x^*)}{\mu(B_{r_l}(x^*))}$ is bounded uniformly in $\omega'$ and $l$, Therefore this bound simply becomes
\begin{align*}
\sum_{j=1}^{\fceil{t}{\mu(B_{r_l}(x^*))}} \mu_{\sigma^j \omega}(B_{r_l}(x^*)) = \sum_{\substack{ 1 \leq j \leq \fceil{t}{\mu(B_{r_l}(x^*))} \\ j\not \in \mathcal{B}(l,t,x^*,\omega)}} \mu_{\sigma^j \omega}(B_{r_l}(x^*)) + O\left(N_l^{-1}\right).
\end{align*}
Since $r_l<\delta'(N_l)$, we further have
\begin{align*}
\sum_{\substack{1 \leq j \leq \fceil{t}{\mu(B_{r_l}(x^*))} \\ j\not \in \mathcal{B}(l,t,x^*,\omega)}} \mu_{\sigma^j \omega}(B_{r_l}(x^*)) & = \sum_{\substack{ 1 \leq j \leq \fceil{t}{\mu(B_{r_l}(x^*))} \\ j\not \in \mathcal{B}(l,t,x^*,\omega)}} \rho_{\sigma^j\omega}(x^*) \vol(B_{r_l}(x^*)) + O(N_l^{-1})\\
& = \sum_{1 \leq j \leq \fceil{t}{\mu(B_{r_l}(x^*))}} \rho_{\sigma^j\omega}(x^*) \vol(B_{r_l}(x^*)) + O(N_l^{-1}).
\end{align*}
Finally, we can apply the Ergodic Theorem to the last term and obtain
\begin{equation*}
\vol(B_{r_l}(x^*)) \sum_{1 \leq j \leq \fceil{t}{\mu(B_{r_l}(x^*))}} \rho_{\sigma^j\omega}(x^*) \sim \vol(B_{r_l}(x^*)) \fceil{t}{\mu(B_{r_l}(x^*))} \rho(x^*) \sim t.
\end{equation*}

Altogether we have shown that for almost every $(x^*,\omega)$ it holds that
\begin{equation}\label{poismeancon}
\lim_{l \rightarrow \infty} \sum_{j=1}^{\fceil{t}{\mu(B_{r_l}(x^*))}} \mu_{\sigma^j \omega}(B_{r_l}(x^*)) = t \quad \forall t>0.
\end{equation}

(iv)
Using Lemmas \ref{badseqmixlem} and \ref{badrecseqlem} with $\Lambda_l^{(n)}=|\log r_l|$ and $A_l=B_l=l$, for $\nu$-a.e.\ $(x^*,\omega)$, there is a bad set of indices $\B\subset \N$ that has density $0$ and such that for every $t>0$ there is an $L(t)\geq 1$ such that
\begin{itemize}
\item for all $l\geq L(t), m\geq 2$ and $0\leq k_0 \leq ... \leq k_{m-2} \leq \fceil{t}{\mu(B_{r_l}(x^*))}$ with $k_j\notin \B$ for $j=0,...,m-2$, it holds that
\begin{equation}\label{Hmcond}
H_m(C(\sigma^{k_0} \omega), C(\sigma^{k_1} \omega),...,C(\sigma^{k_{m-2}} \omega)) \leq |\log r_l|,
\end{equation}
\item for all $l\geq L(t)$ and $j \leq \fceil{t}{\mu(B_{r_l}(x^*))}, j \not \in \B$ it holds that 
\begin{equation}\label{poisslowrec}
\mu_{\sigma^j \omega} \left( B_{r_l}(x^*) \cap T_{\sigma^n \omega}^{-n}(B_{r_l}(x^*)) \right) \leq C(\omega) r_l^d |\log r_l|^{-l} \quad \forall n\leq |\log r_l|^l.
\end{equation}
\end{itemize}

Now we claim that for all such choices $(x^*,\omega)$ it holds that
\begin{equation}\label{poisconclaimgood}
\left(S_{\fceil{t}{\mu(B_{r_l}(x^*))},\omega} (1_{B_{r_l}(x^*)}) \right)_{t>0} \xRightarrow{\mu_{\omega}} \mathcal{P} \quad \as{l}.
\end{equation}
Once we show this, the proof is complete. 

Define
\begin{equation*}
S_{l,t} = \sum_{1\leq j \leq \fceil{t}{\mu(B_{r_l}(x^*))}, j\not \in \B} 1_{B_{r_l}(x^*)} \circ T_{\omega}^j,
\end{equation*}
then, for every $t>0$, it follows that
\begin{align*}
\int_M \left|S_{\fceil{t}{\mu(B_{r_l}(x^*))},\omega} (1_{B_{r_l}(x^*)}) - S_{l,t} \right| \d\mu_{\omega} &= \sum_{1\leq j \leq \fceil{t}{\mu(B_{r_l}(x^*))}, j\in \B} \mu_{\sigma^j\omega} (B_{r_l}(x^*))\\
& \ll \# \left(\B \cap \left[1,\fceil{t}{\mu(B_{r_l}(x^*))} \right] \right) \vol(B_{r_l}(x^*)) \\
&= o(1), 
\end{align*}
where we used the fact that, for $\mu$-a.e.\ $x^*$ it holds that $\rho(x^*)>0$ and therefore $\mu(B_{r_l}(x^*)) \asymp \vol(B_{r_l}(x^*))$. Therefore \eqref{poisconclaimgood} is equivalent to
\begin{equation}\label{poisconclaim'}
\left(S_{l,t} \right)_{t>0} \xRightarrow{\mu_{\omega}} \mathcal{P} \quad \as{l}.
\end{equation}

Employing the method of moments, \eqref{poisconclaim'} follows once we show the following; For every $J\in \N$, $0=t_0<t_1<...<t_J$ and $m_1,...,m_J\in\N$ it holds that
\begin{equation}\label{poismomentclaim}
\lim_{l\rightarrow\infty} \int_M \prod_{j=1}^J \binom{S_{l,t_j} - S_{l,t_{j-1}}}{m_j} \d\mu_{\omega} = \prod_{j=1}^J \frac{(t_j-t_{j-1})^{m_j}}{m_j!}.
\end{equation}

(v)
Rewriting the left-hand side, we obtain
\begin{equation*}
\prod_{j=1}^{J} {{S_{t_j,l}-S_{t_{j-1},l}} \choose m_j}=\prod_{j=1}^J \sum_{\substack{\fceil{t_{j-1}}{\mu(B_{r_l}(x^*))}+1\leq k_{1,j} <...< k_{m_j,j} \leq \fceil{t_j}{\mu(B_{r_l}(x^*))} \\ k_{1,j},...,k_{m_j,j} \not \in \mathcal{B}_{l,t_j}}} \prod_{i=1}^{m_j} 1_{B_{r_l}(x^*)} \circ T_{\omega}^{k_{i,j}}.
\end{equation*}
Let
\begin{equation}\label{poisdeltadef}
\Delta_l=\left\{\textbf{k}=(k_{i,j})_{\substack{j=1,...,J \\ i=1,...,m_j}} \;\left| 
\begin{aligned}
&\fceil{t_{j-1}}{\mu(B_{r_l}(x^*))}+1\leq k_{1,j} < ... < k_{m_j,j} \leq \fceil{t_j}{\mu(B_{r_l}(x^*))} \\
& \text{ and } k_{1,j},...,k_{m_j,j} \not \in \B \; \text{ for } j=1,...,J
\end{aligned}
\right. \right\},
\end{equation}
then we can simply write
\begin{equation}\label{slbinomprod}
\prod_{j=1}^{J} {{S_{t_j,l}-S_{t_{j-1},l}} \choose m_j}=\sum_{\textbf{k}\in \Delta_l} \prod_{1\leq j\leq J, 1\leq i\leq m_j} 1_{B_{r_l}(x^*)} \circ T_{\omega}^{k_{i,j}}.
\end{equation}
To simplify notation, from now on denote $\textbf{k}\in \Delta_l$ as $\textbf{k}=(k_1,...,k_m)$ with $k_1<...< k_m$ instead, where $m=m_1+...+m_J$.

Now for any multi-index $\textbf{k}\in \Delta_l$ there is a $\iota(\textbf{k})\in \{1,..., 1 + m^2\}$ such that for all $i\neq i'$ it holds that
\begin{equation}\label{ksep}
|k_{i}-k_{i'}|\leq |\log r_l|^{\frac{\iota(\textbf{k})}{\theta}} \quad \text{or} \quad |k_{i}-k_{i'}| > |\log r_l|^{\frac{\iota(\textbf{k}) + 1}{\theta}}.
\end{equation}
Indeed, if the above is not satisfied for some $\iota\in \{1,..., 1 + m^2\}$ then there must be some $i_{\iota}\neq i_{\iota}'$ with
\begin{equation*}
|\log r_l|^{\frac{\iota}{\theta}}< |k_{i_{\iota}}-k_{i_{\iota}'}| \leq |\log r_l|^{\frac{\iota + 1}{\theta}},
\end{equation*}
however there are only ${m\choose 2}$ possible choices for $i_{\iota}\neq i_{\iota}'$, hence there must be a $\iota(\textbf{k})$ where \eqref{ksep} holds.

Furthermore, we define the separation number as the maximum number of indices that are \textit{well separated}, more precisely
\begin{equation*}
s(\textbf{k}) = \max(s \geq 1 \;|\; \exists i_1<...<i_s \text{ with }  \min_{b=1,\dots,s-1} |k_{i_{b+1}}-k_{i_b}| > |\log r_l|^{\frac{\iota(\textbf{k}) + 1}{\theta}}),
\end{equation*}
and let $\Delta_l^s=\{\textbf{k}\in \Delta_l \;|\; s(\textbf{k})=s\}$. It holds that
\begin{equation*}
\#\Delta_l^s\ll r_l^{-ds} |\log r_l|^{\frac{(m-s)(m^2+1)}{\theta}}.
\end{equation*}
Indeed, to obtain a $\textbf{k} \in \Delta_l^s$, we can choose the well separated indices freely, with $\fceil{t_j}{\mu(B_{r_l}(x^*))} \asymp r_l^{-d}$ choices each, and then we can choose the other indices within distance $|log r_l|^{\frac{m^2 + 1}{\theta}}$ of one of the well separated ones. Of course, using this construction, we will have counted some multiindices too often, as well as counted some multiindices that do not appear, due to the restrictions in \eqref{poisdeltadef}. However, this still provides an upper bound.

For $\textbf{k}\in \Delta_l^m$, using Lemma \ref{ballmixlem} it holds
\begin{align*}
\left|\int_M \prod_{i=1}^m 1_{B_{r_l}(x^*)} \circ T_{\omega}^{k_i} - \prod_{i=1}^m \mu_{\sigma^{k_{i}} \omega}(B_{r_l}(x^*)) \right| &\ll H_m(C(\sigma^{k_1} \omega), C(\sigma^{k_2} \omega),\dots, C(\sigma^{k_{m-1}} \omega)) e^{-\frac{\gamma}{m \kappa + 1} |\log r_l|} \\
&\ll |\log r_l| r_l^{100dm},
\end{align*}
summing over $\textbf{k}\in \Delta_l^m$, and using \eqref{Hmcond}, we have
\begin{align*}
&\sum_{\textbf{k}\in \Delta_l^m} \int_M \prod_{i=1}^m 1_{B_{r_l}(x^*)} \circ T_{\omega}^{k_i} \d\mu_{\omega} = \sum_{\textbf{k}\in \Delta_l^m} \prod_{i=1}^m \mu_{\sigma^{k_{i}} \omega}(B_{r_l}(x^*)) + |\log r_l| r_l^{100dm} \#\Delta_l^m \\
& = \prod_{j=1}^J \sum_{\substack{\fceil{t_{j-1}}{\mu(B_{r_l}(x^*))}+1\leq k_{1,j} <...< k_{m_j,j} \leq \fceil{t_j}{\mu(B_{r_l}(x^*))} \\ k_{1,j},...,k_{m_j,j} \not \in \B}} \prod_{i=1}^{m_j} \mu_{\sigma^{k_{i,j}} \omega}(B_{r_l}(x^*)) + o(1)\\
& = \prod_{j=1}^J \sum_{\fceil{t_{j-1}}{\mu(B_{r_l}(x^*))}+1\leq k_{1,j} <...< k_{m_j,j} \leq \fceil{t_j}{\mu(B_{r_l}(x^*))}} \prod_{i=1}^{m_j} \mu_{\sigma^{k_{i,j}} \omega}(B_{r_l}(x^*)) + o(1)\\
& = \prod_{j=1}^J \frac{1}{m_j!} \left( \sum_{\fceil{t_{j-1}}{\mu(B_{r_l}(x^*))}+1\leq k \leq \fceil{t_j}{\mu(B_{r_l}(x^*))}} \mu_{\sigma^k \omega} (B_{r_l}(x^*)) \right)^{m_j} + o(1).
\end{align*}
Finally, applying \eqref{poismeancon} to the last term yields 
\begin{equation}\label{poismommain}
\lim_{l\rightarrow\infty} \sum_{\textbf{k}\in \Delta_l^m} \int_M \prod_{i=1}^m 1_{B_{r_l}(x^*)} \circ T_{\omega}^{k_i} \d\mu_{\omega} = \prod_{j=1}^J \frac{(t_j-t_{j-1})^{m_j}}{m_j!}.
\end{equation}

Now for $s\leq m-1$ and $1\leq j \leq \fceil{t_J}{\mu(B_{r_l}(x^*))}$ with $j\not\in \B$ let 
\begin{equation*}
\Delta_l^{s,j}=\{\textbf{k}\in \Delta_l \;|\; s(\textbf{k})=s \text{ and } \exists i<i' \text{ such that } k_i=j, k_{i'}\leq j + |\log r_l|^{\frac{\iota(\textbf{k})}{\theta}}\},
\end{equation*}
the $\Delta_l^s \subset \bigcup_{1\leq j\leq \fceil{t_J}{\mu(B_{r_l}(x^*))}, j\not\in \B} \Delta_l^{s,j}$. Furthermore it holds that $\# \Delta_l^{s,j} \ll r_l^{-d(s-1)} |\log r_l|^{\frac{(m-s)(1+m^2)}{\theta}}\ll r_l^{-d(s-1)} |\log r_l|^{\frac{2m^3}{\theta}}$. This can be shown by the same counting argument that we used for $\Delta_l^s$, with the only difference that now one of the well-separated indices is fixed to $j$. For $\textbf{k}\in \Delta_l^{s,j}$ let $i_1<...<i_s$ be the indices such that $\min_{b=1,\dots,s-1} |k_{i_{b+1}}-k_{i_b}| > |\log r_l|^{\frac{\iota(\textbf{k})+1}{\theta}}$ and denote $j'=k_{i'}$. In fact, if we only require that $\min_{b=1,\dots,s-1} |k_{i_{b+1}}-k_{i_b}| > \frac{1}{2}|\log r_l|^{\frac{\theta(\textbf{k})+1}{\theta}}$, we may assume that one of the $k_{i_b}$ is equal to $j$, say $j=k_{i_{b^*}}$. Now let $h$ be the smooth function approximating $1_{B_{r_l}(x^*)}$ as in Lemma \ref{ballmixlem}, using mixing we obtain
\begin{align*}
&\int_M \prod_{i=1}^m 1_{B_{r_l}(x^*)} \circ T_{\omega}^{k_i} \d\mu_{\omega} \leq \int_M \left( \prod_{1\leq b \leq s, b \neq b^*} h \circ T_{\omega}^{k_{i_b}} \right) \cdot (h\cdot h\circ T_{\sigma^j \omega}^{j'-j}) \circ T_{\omega}^j \d\mu_{\omega}\\
& \ll H_s(C(\sigma^{k_{i_1}} \omega), C(\sigma^{k_{i_2}} \omega),\dots, C(\sigma^{k_{i_{s-1}}} \omega)) r_l^{d(s-1)} \mu_{\sigma^j\omega}(B_{r_l}(x^*) \cap T_{\sigma^{j'} \omega}^{-(j'-j)} B_{r_l}(x^*))\\
& \ll |\log r_l| r_l^{ds} |\log r_l|^{-2m^3 - 2},
\end{align*}
where we used \eqref{poisslowrec} for $l\geq 2m^3+2$ in the last step. Summing over $\textbf{k}\in \Delta_l^{s,j}$ and then over $s,j$ yields
\begin{equation}\label{poismomrest}
\sum_{\textbf{k}\in \Delta_l \setminus \Delta_l^m} \int_M \prod_{i=1}^m 1_{B_{r_l}(x^*)} \circ T_{\omega}^{k_i} \d\mu_{\omega} \ll |\log r_l|^{-1}.
\end{equation}
Together \eqref{poismommain} and \eqref{poismomrest} show \eqref{poismomentclaim}, and the proof of \eqref{hitconthmclaim} is complete.

(vi) In order to prove \eqref{retconthmclaim} we have to show
\begin{equation*}
\lim_{l\rightarrow\infty} \int_M \prod_{j=1}^J \binom{S_{l,t_j} - S_{l,t_{j-1}}}{m_j} \d\mu_{\omega}|_{B_{r_l}(x^*)} = \prod_{j=1}^J \frac{(t_j-t_{j-1})^{m_j}}{m_j!}.
\end{equation*}
This can be shown completely analogous to \eqref{poismomentclaim}, only with the conditional measure $\mu_{\omega}|_{B_{r_l}(x^*)}$ instead of $\mu_{\omega}$. In the proof, we only need to replace any index $\textbf{k}$ by $(0,\textbf{k})$, then we can proceed in the same fashion.
\end{proof}

\section{Sufficient conditions for mixing of all orders}\label{condsec}

\subsection{Mixing against $L^{\infty}$}\label{linfsec}

In some natural systems, especially uniformly expanding ones, (quenched) exponential mixing holds, where one of the functions is required to be regular in some sense, while the second function only has to be bounded. We saw this, for example, in Example \ref{renewalexple}, where the mixing properties are essentially those of the doubling map, and one can test $BV$ functions against $L^{\infty}$ functions. We will show that this implies (stretched) exponential mixing of all orders, and provide an explicit form for the random threshold.

\begin{lemma}\label{linfmemlem}
Suppose there is a Banach space $\B\subset L^{\infty}$ and constants $\gamma>0$ and $\theta\in (0,1]$ as well as a function $C(\omega):\Omega \rightarrow (0,\infty)$ such that, for all $f\in \B,g\in L^{\infty}, n\geq 1$, it holds
\begin{equation*}
\left|\int_M f \cdot g\circ T_{\omega}^n \d\mu_{\omega}- \int_M f \d\mu_{\omega} \int_M g \d\mu_{\sigma^n \omega} \right| \leq C(\omega) e^{-\gamma n^{\theta}} \|f\|_{\mathfrak{B}} \|g\|_{L^{\infty}} \quad \text{for $\P$-a.e.\ $\omega$}.
\end{equation*}
Then $(T_{\omega})$ is quenched stretched exponential mixing of all orders for the random thresholds
\begin{equation*}
N^*_m(\omega) = \left( \frac{2}{\gamma} \log(m C(\omega)) \right)^{\frac{1}{\theta}}.
\end{equation*}
\end{lemma}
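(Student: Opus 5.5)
We prove the statement by iterating the asymmetric estimate \eqref{linfiter} along the times $k_1,\dots,k_{m-1}$, peeling off one factor at a time. We assume, as in the discussion preceding \eqref{linfmem}, that $\|\cdot\|_{L^\infty}\le\|\cdot\|_{\mathfrak{B}}$, which may be arranged by rescaling the norm; then $\mathfrak{B}$ is a Banach algebra. Fix $\omega$ in the full-measure set where the hypothesis holds along the whole orbit $\sigma^{k}\omega$, $k\ge0$; this is a countable intersection of full-measure sets. Fix $f_0,\dots,f_{m-1}\in\mathfrak{B}$, normalised so that $\|f_j\|_{\mathfrak{B}}\le1$.

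For $0\le i\le m-1$ set
\begin{equation*}
G_i=\prod_{j=i}^{m-1} f_j\circ T_{\sigma^{k_i}\omega}^{k_j-k_i}, \qquad I_i=\int_M G_i \d\mu_{\sigma^{k_i}\omega}.
\end{equation*}
Then $G_i=f_i\cdot G_{i+1}\circ T_{\sigma^{k_i}\omega}^{k_{i+1}-k_i}$, where $\|f_i\|_{\mathfrak{B}}\le1$ and $\|G_{i+1}\|_{L^\infty}\le\prod_{j>i}\|f_j\|_{L^\infty}\le1$. Apply the hypothesis at the base point $\sigma^{k_i}\omega$ with $f=f_i$, $g=G_{i+1}$ and $n=k_{i+1}-k_i$. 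This gives
\begin{equation*}
\Bigl|I_i-\int f_i\d\mu_{\sigma^{k_i}\omega}\, I_{i+1}\Bigr|\le C(\sigma^{k_i}\omega)e^{-\gamma(k_{i+1}-k_i)^\theta}.
\end{equation*}
Invariance \eqref{fibrerel} is what identifies the left-hand integral with $I_i$ after pushing forward from $\mu_\omega$; for $i=0$ the quantity $I_0$ is precisely the multiple correlation in \eqref{asm:qmem}. Telescoping, and using $|\int f_i\d\mu|\le1$ and $|I_{i+1}|\le1$, yields \eqref{linfmem}:
\begin{equation*}
\Bigl|I_0-\prod_{j=0}^{m-1}\int f_j\d\mu_{\sigma^{k_j}\omega}\Bigr|\le\sum_{i=0}^{m-2}C(\sigma^{k_i}\omega)e^{-\gamma(k_{i+1}-k_i)^\theta}.
\end{equation*}

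Now impose the gap condition $k_{i+1}-k_i\ge N^*_m(\sigma^{k_i}\omega)$. With the stated formula for $N^*_m$ this means $e^{-\frac{\gamma}{2}(k_{i+1}-k_i)^\theta}\le\frac{1}{mC(\sigma^{k_i}\omega)}$, and therefore
\begin{equation*}
C(\sigma^{k_i}\omega)e^{-\gamma(k_{i+1}-k_i)^\theta}\le\frac1m e^{-\frac\gamma2(k_{i+1}-k_i)^\theta}\le\frac1m e^{-\frac\gamma2 p^\theta},
\end{equation*}
where $p=\min_i(k_{i+1}-k_i)$. Summing the at most $m$ terms gives \eqref{asm:qmem} with $\gamma_m=\gamma/2$ and $\theta_m=\theta$, both independent of $m$. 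To get an $\mathbb{N}$-valued threshold we replace $N^*_m$ by its ceiling; it is measurable because $C$ is. If $mC<1$, the formula is read as $0$, or we simply use $\max(C,1)$.

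The only delicate point is the bookkeeping in the peeling step. One must make sure that the hypothesis is applied at the shifted fibre $\sigma^{k_i}\omega$, with the correct fibre measures supplied by \eqref{fibrerel}. One must also check that the $L^\infty$ norm of the composite $G_{i+1}$ stays bounded by $1$; this is automatic, because composition does not increase the sup norm. This is exactly why asymmetric mixing against $L^\infty$ avoids the growth $K^{k_j-k_i}$ that would otherwise require assumption (R).
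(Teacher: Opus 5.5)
Your argument is correct and is exactly what the paper intends: its proof only says to iterate \eqref{linfiter} until one reaches \eqref{linfmem} and then rewrite via the threshold as in Section~\ref{mixsec}. Your peeling and telescoping, followed by the bound $C(\sigma^{k_i}\omega)e^{-\gamma(k_{i+1}-k_i)^{\theta}}\le \frac{1}{m}e^{-\frac{\gamma}{2}p^{\theta}}$ under the gap condition (giving $\gamma_m=\gamma/2$, $\theta_m=\theta$), supply precisely those omitted details.

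Two small remarks. First, $\|\cdot\|_{L^\infty}\le\|\cdot\|_{\mathfrak{B}}$ does not by itself make $\mathfrak{B}$ a Banach algebra; this is harmless, since your estimate never uses multiplicativity in $\mathfrak{B}$. Second, rescaling the norm to achieve that inequality is not free: it replaces $C$ by $C/c$ and costs a factor $c^{m}$ in the original norm. So the exact threshold formula depends on this normalisation being an assumption, as it is in the paper.
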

\begin{proof}
This follows by iterating the assumed correlation bound as in \eqref{linfiter}, arriving at \eqref{linfmem}. Rewriting as in Section \ref{mixsec} yields the desired expression for the random thresholds. We omit the details.
\end{proof}

\subsection{Spectral decay}\label{specsec}

A powerful modern tool for analysing mixing properties is the random Perron--Frobenius transfer operator. For $\omega\in \Omega$ we define the random transfer operator $L_{\omega}:L^1(\mu_{\omega}) \rightarrow L^1(\mu_{\sigma \omega})$ by the relation
\begin{equation}\label{ldef}
\int_M f \cdot g \circ T_{\omega} \d\mu_{\omega} = \int_M L_{\omega} (f) \cdot g \d\mu_{\sigma \omega} \quad \forall g\in L^{\infty}.
\end{equation}
The iterates are defined as
\begin{equation*}
L^n_{\omega} = L_{\sigma^{n-1} \omega} \circ L_{\sigma^{n-2} \omega} \circ \cdots \circ L_{\omega}  \quad n\geq 1.
\end{equation*}
If the maps $T_{\omega}$ are invertible, then it holds that $L_{\omega}(f) = f\circ T_{\sigma \omega}^{-1}$. We will often consider $L_{\omega}$ acting on Banach spaces other than $L^1(\mu_\omega)$. In this case, \eqref{ldef} is understood for all test functions $g$ in the corresponding dual space, so that both sides are well-defined.

\begin{remark}
Often, when defining the transfer operator, a fixed reference measure is used in place of the fibre measures $\mu_{\omega}$. A common choice is the volume measure $\vol$, in which case $L_{\omega,\vol}$ is defined by
\begin{equation*}
\int_M f \cdot g \circ T_{\omega} \d\vol = \int_M L_{\omega,\vol} (f) \cdot g \d\vol.
\end{equation*}
This has the advantage that $L$ can also be expressed as
\begin{equation*}
L_{\omega,\vol}(f)(x)=\sum_{T_{\omega} y = x} \frac{f(y)}{|\det(DT_{\omega})(y)|},
\end{equation*}
and it is easier to apply analytic methods to study the operator. The two operators are intimately related. Indeed, if we denote the density of the fibre measure by $\frac{\d\mu_{\omega}}{\d\vol} = \rho_{\omega}$, then they are related by
\begin{equation*}
\rho_{\sigma \omega} \cdot L_{\omega}(f) = L_{\omega,\vol} (\rho_{\omega} \cdot f) \quad \forall f \in L^1(\mu_{\omega}).
\end{equation*}
Thus, the two formulations are equivalent up to a change of density. We adopt \eqref{ldef}, as it simplifies the notation in Lemma \ref{specgapmemlem}.
\end{remark}

We prove quenched (stretched) exponential mixing of all orders via an abstract decay condition that plays the role of a spectral gap in deterministic systems. Since we are working with a family of operators $(L_{\omega})$ rather than a single operator, this cannot be formulated in terms of spectral theory in the usual sense. Instead, following the philosophy of \cite[Theorem 13]{DHS23}, we assume suitable decay of norms. More explicitly we assume that, for $\P$-a.e.\ $\omega$, there are Banach spaces $\B(\omega)$ and a common Banach algebra $\B$ such that the following conditions are satisfied

(H0) For $\P$-a.e.\ $\omega$ it holds $L_{\omega}(\B(\omega)) \subset \B(\sigma \omega)$.

(H1) There are constants $\gamma>0$, $\theta\in (0,1]$ and a function $C:\Omega \rightarrow(0,\infty)$ such that, for $f \in \B(\omega)$ with $\int_M f \d\mu_{\omega}=0$, it holds that
\begin{equation*}
\|L_{\omega}^n(f)\|_{\B(\sigma^n \omega)} \leq C(\omega) e^{-\gamma n^{\theta}} \|f\|_{\B(\omega)} \quad \forall n\geq 1.
\end{equation*}

(H2) It holds that $\B \subset \B(\omega)$, and for $f \in \B(\omega)$ and $g\in \B$ we have
\begin{equation*}
\|g\|_{\B(\omega)} \leq \|g\|_{\B} \quad \text{and} \quad \|f g\|_{\B(\omega)} \leq \|f\|_{\B(\omega)} \|g\|_{\B}.
\end{equation*}

(H3) For $f \in \B(\omega)$ and $g \in \B$ we have
\begin{equation*}
\left|\int_M f \cdot g \, d\mu_\omega\right|
\leq \|f\|_{\B(\omega)} \|g\|_{\B}.
\end{equation*}

\begin{lemma}\label{specgapmemlem}
Suppose conditions (H0) -- (H3) are satisfied, then $(T_{\omega})$ is quenched stretched exponential mixing of all orders in $\B$. The random thresholds are given by
\begin{equation*}
N^*_m(\omega) = \left( \frac{2}{\gamma} \log(m C(\omega)) \right)^{\frac{1}{\theta}}.
\end{equation*}
\end{lemma}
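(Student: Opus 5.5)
We transfer the integral to the last fibre with the duality \eqref{ldef}, then peel off one observable at a time, applying (H1) to a centred function at each step. Fix $m\ge2$, $f_0,\dots,f_{m-1}\in\B$ and $0=k_0\le\dots\le k_{m-1}$ with the gap condition $k_{j+1}-k_j\ge N^*_m(\sigma^{k_j}\omega)$. Iterating \eqref{ldef} gives
\begin{equation*}
\int_M \prod_{j=0}^{m-1} f_j\circ T_\omega^{k_j}\d\mu_\omega=\int_M F_{m-1}\d\mu_{\sigma^{k_{m-1}}\omega},
\end{equation*}
where $F_0=f_0$ and $F_{j}=L^{k_j-k_{j-1}}_{\sigma^{k_{j-1}}\omega}(F_{j-1})\cdot f_j$. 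Here we use $L^n_\omega(f\cdot g\circ T^n_\omega)=g\,L^n_\omega f$, which follows from \eqref{ldef}. By (H0) and (H2), each $F_j$ lies in $\B(\sigma^{k_j}\omega)$. The product term has the same form, $\prod_j\int f_j\d\mu_{\sigma^{k_j}\omega}=\int_M\tilde F_{m-1}\d\mu_{\sigma^{k_{m-1}}\omega}$, with $\tilde F_j=(\int \tilde F_{j-1}\d\mu_{\sigma^{k_{j-1}}\omega})\, f_j$. This uses that $L$ preserves integrals: take $g=1$ in \eqref{ldef}.

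At each step we split $F_{j-1}=\bar F_{j-1}+c_{j-1}$, where $c_{j-1}=\int F_{j-1}\d\mu_{\sigma^{k_{j-1}}\omega}$ and $\bar F_{j-1}$ is centred. Since $L$ fixes constants, $L^{n}(F_{j-1})=L^n(\bar F_{j-1})+c_{j-1}$. Applying (H1) with $n=k_j-k_{j-1}$, and then (H2) to multiply by $f_j$, gives
\begin{equation*}
\|F_j - c_{j-1}f_j\|_{\B(\sigma^{k_j}\omega)}\le C(\sigma^{k_{j-1}}\omega)e^{-\gamma(k_j-k_{j-1})^\theta}\|\bar F_{j-1}\|_{\B(\sigma^{k_{j-1}}\omega)}\|f_j\|_\B .
\end{equation*}
We need $\|\bar F_{j-1}\|\le 2\|F_{j-1}\|$; this holds because, by (H3) and (H2), $|c_{j-1}|\le\|F_{j-1}\|$ and $\|1\|_{\B(\omega)}\le\|1\|_\B$, normalised to $1$. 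An induction then shows that $\|F_j\|\le\prod_{i\le j}\|f_i\|_\B$ up to a factor $\prod(1+\text{small})$, and that $|c_j-\tilde c_j|$ is bounded by a sum of terms $\sum_{i\le j} C(\sigma^{k_{i-1}}\omega)e^{-\gamma(k_i-k_{i-1})^\theta}\prod\|f\|_\B$. The final integral is controlled by (H3) with $g=1$. The result is the analogue of \eqref{linfmem}:
\begin{equation*}
\Big|\int_M \prod_{j} f_j\circ T_\omega^{k_j}\d\mu_\omega-\prod_j\int f_j\d\mu_{\sigma^{k_j}\omega}\Big|\le c_m\sum_{j=0}^{m-2} C(\sigma^{k_j}\omega)e^{-\gamma(k_{j+1}-k_j)^\theta}\prod_j\|f_j\|_\B .
\end{equation*}

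Finally we convert to the form of Definition \ref{qmemdef}, exactly as in Section \ref{mixsec}. Set $p=\min_j(k_{j+1}-k_j)$. The gap condition with $N^*_m$ as stated gives $C(\sigma^{k_j}\omega)e^{-\frac{\gamma}{2}(k_{j+1}-k_j)^\theta}\le \frac1m$, so each summand is at most $\frac1m e^{-\frac\gamma2 p^\theta}$. The total error is therefore at most $e^{-\frac{\gamma}{2}p^\theta}\prod\|f_j\|_\B$, which is \eqref{asm:qmem} with $\gamma_m=\gamma/2$ and $\theta_m=\theta$, independent of $m$.

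The main obstacle is the constant bookkeeping. The factors $(1+\cdot)$ and the constant $c_m$ from centring must not spoil the clean bound. We would handle this by using the half of the exponent left over after the gap condition, or by absorbing $c_m$ into the factor $m$ inside the logarithm defining $N^*_m$; this changes $N^*_m$ only by a constant inside the logarithm, which the paper treats as immaterial. A second point to check is that $\|1\|_\B=1$ is allowed, or else that its norm can be absorbed in the same way.
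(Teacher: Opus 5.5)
Your proposal is correct and is essentially the paper's argument. Both proofs push the correlation to the last fibre with the transfer operators, peel off one observable at a time, apply (H1) to the centred part, and use (H2)--(H3) for the products and the final pairing. The only difference is organisational: you carry the means $c_j$ along and compare them with $\tilde c_j$ in a single pass, whereas the paper first reduces to $\int_M f_0\,d\mu_\omega=0$ and then inducts on $m$.

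The constant you flag (the factor $c_m$ and the normalisation $\|1\|_{\B}=1$) is not an artefact of your route. The paper's proof uses the same normalisation implicitly and handles the same constant only through its remark about replacing $\|\cdot\|_{\B}$ by $2\|\cdot\|_{\B}$. So placing the constant inside the logarithm that defines $N^*_m$ is the honest way to make the stated bound exact.
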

\begin{proof}
Let $m \geq 2$, $f_0,\dots,f_{m-1} \in \B$, and integers $0 = k_0 \leq \dots \leq k_{m-1}$ satisfying
\begin{equation}\label{specgapcond}
k_{j+1} - k_j \geq N^*_m(\sigma^{k_j}\omega).
\end{equation}
We show that
\begin{equation}\label{speclemmemeq}
\begin{aligned}
\left| \int_M \prod_{j=0}^{m-1} f_j \circ T_{\omega}^{k_j} \d\mu_{\omega}
- \prod_{j=0}^{m-1} \int_M f_j \d\mu_{\sigma^{k_j}\omega} \right|
\leq e^{-\frac{\gamma}{2} \min_{0\leq j\leq m-2} (k_{j+1}-k_j)^{\theta}}
\prod_{j=0}^{m-1} \|f_j\|_{\B}.
\end{aligned}
\end{equation}
for $\P$-a.e.\ $\omega$.

It suffices to prove this under the additional assumption that
\begin{equation*}
\int_M f_0 \d\mu_{\omega} = 0,
\end{equation*}
since the general case follows by replacing $f_0$ with $f_0 - \int_M f_0 \d\mu_{\omega}$ and arguing by induction, replacing $\|\cdot\|_{\B}$ by $2\|\cdot\|_{\B}$ if necessary.

Assuming $\int_M f_0 \d\mu_{\omega} = 0$, we rewrite the left-hand side of \eqref{speclemmemeq} using the transfer operators as
\begin{equation}\label{speclemmemeq2}
\left| \int_M f_{m-1} \cdot
L^{k_{m-1}-k_{m-2}}_{\sigma^{k_{m-2}} \omega} \left(
f_{m-2} \cdot L^{k_{m-2}-k_{m-3}}_{\sigma^{k_{m-3}} \omega} \left(
\cdots L^{k_1}_{\omega}(f_0)
\right)\right)
\d\mu_{\sigma^{k_{m-1}} \omega} \right|.
\end{equation}

To simplify notation, define inductively
\begin{equation*}
g_0 = L^{k_1}_{\omega}(f_0), \qquad
g_j = L^{k_{j+1}-k_j}_{\sigma^{k_j}\omega}\bigl(f_j \cdot g_{j-1}\bigr),
\quad j = 1,\dots,m-2.
\end{equation*}
Then \eqref{speclemmemeq2} becomes
\begin{equation*}
\left| \int_M f_{m-1} \cdot g_{m-2} \d\mu_{\sigma^{k_{m-1}} \omega} \right|.
\end{equation*}
By (H3), it suffices to show that, if \eqref{specgapcond} holds, then
\begin{equation}\label{goalestimate}
\| g_{m-2} \|_{\B(\sigma^{k_{m-1}} \omega)}
\leq e^{-\frac{\gamma}{2} \min_{0\leq j\leq m-2} (k_{j+1}-k_j)^{\theta}}
\prod_{j=0}^{m-2} \|f_j\|_{\B}.
\end{equation}

For $j=0,\dots,m-2$, set
\begin{equation*}
A_j := C(\sigma^{k_j}\omega)\, e^{-\gamma (k_{j+1}-k_j)^{\theta}}.
\end{equation*}
We prove by induction that for $l=0,\dots,m-2$,
\begin{equation}\label{specgapnormest}
\|g_l\|_{\B(\sigma^{k_{l+1}} \omega)}
\leq \left( \sum_{i=0}^l \prod_{j=0}^i A_j \right)
\prod_{j=0}^l \|f_j\|_{\B}.
\end{equation}

For $l=0$, since $\int_M f_0 \d\mu_{\omega} = 0$, (H1) gives
\begin{equation*}
\|g_0\|_{\B(\sigma^{k_1} \omega)}
= \|L^{k_1}_{\omega}(f_0)\|_{\B(\sigma^{k_1} \omega)}
\leq C(\omega) e^{-\gamma k_1^{\theta}} \|f_0\|_{\B}
= A_0 \|f_0\|_{\B}.
\end{equation*}

Now assume \eqref{specgapnormest} holds for some $l \leq m-3$, and set
\begin{equation*}
h = f_{l+1} \cdot g_l.
\end{equation*}
By (H2),
\begin{equation*}
\|h\|_{\B(\sigma^{k_{l+1}} \omega)}
\leq \|f_{l+1}\|_{\B} \|g_l\|_{\B(\sigma^{k_{l+1}} \omega)}
\leq \left( \sum_{i=0}^l \prod_{j=0}^i A_j \right)
\prod_{j=0}^{l+1} \|f_j\|_{\B}.
\end{equation*}
By (H3),
\begin{equation*}
\left| \int_M h \d\mu_{\sigma^{k_{l+1}} \omega} \right|
\leq \left( \sum_{i=0}^l \prod_{j=0}^i A_j \right)
\prod_{j=0}^{l+1} \|f_j\|_{\B}.
\end{equation*}

Applying (H1) to the centred function
\begin{equation*}
h - \int_M h \d\mu_{\sigma^{k_{l+1}} \omega},
\end{equation*}
we obtain
\begin{equation*}
\begin{aligned}
\|g_{l+1}\|_{\B(\sigma^{k_{l+2}} \omega)}
&= \|L^{k_{l+2}-k_{l+1}}_{\sigma^{k_{l+1}} \omega} h\|_{\B(\sigma^{k_{l+2}} \omega)} \\
&\leq \left\| L^{k_{l+2}-k_{l+1}}_{\sigma^{k_{l+1}} \omega}
\left(h - \int_M h \d\mu_{\sigma^{k_{l+1}} \omega}\right) \right\|_{\B(\sigma^{k_{l+2}} \omega)} \\
&\quad + \left| \int_M h \d\mu_{\sigma^{k_{l+1}} \omega} \right| \\
&\leq A_{l+1}
\left( \sum_{i=0}^l \prod_{j=0}^i A_j \right)
\prod_{j=0}^{l+1} \|f_j\|_{\B}
+ \left( \sum_{i=0}^l \prod_{j=0}^i A_j \right)
\prod_{j=0}^{l+1} \|f_j\|_{\B} \\
&= \left( \sum_{i=0}^{l+1} \prod_{j=0}^i A_j \right)
\prod_{j=0}^{l+1} \|f_j\|_{\B}.
\end{aligned}
\end{equation*}
This proves \eqref{specgapnormest}.

Finally, under the separation condition \eqref{specgapcond}, each $A_j \leq m^{-1} e^{-\frac{\gamma}{2} (k_{j+1}-k_j)^{\theta}}$, and therefore
\begin{equation}\label{specajest}
\sum_{i=0}^{m-2} \prod_{j=0}^i A_j \leq e^{-\frac{\gamma}{2} \min_{0\leq j\leq m-2} (k_{j+1}-k_j)^{\theta}},
\end{equation}
which yields \eqref{goalestimate} and completes the proof.
\end{proof}

\begin{remark}
In the final estimate \eqref{specajest}, one can in fact show that
\begin{equation*}
\sum_{i=0}^{m-2} \prod_{j=0}^i A_j \leq e^{-\frac{\gamma}{2} k_1^{\theta}}.
\end{equation*}
Consequently, if the first function $f_0$ is centred, i.e.\ $\int_M f_0 \d\mu_{\omega} = 0$, the above argument yields the sharper bound
\begin{equation*}
\begin{aligned}
\left| \int_M \prod_{j=0}^{m-1} f_j \circ T_{\omega}^{k_j} \d\mu_{\omega}
- \prod_{j=0}^{m-1} \int_M f_j \d\mu_{\sigma^{k_j}\omega} \right|
\leq e^{-\frac{\gamma}{2} k_1^{\theta}}
\prod_{j=0}^{m-1} \|f_j\|_{\B}.
\end{aligned}
\end{equation*}

However, when passing to the general case via the standard induction argument, thus removing the centring assumption, this asymmetry is lost, and one recovers the symmetric estimate
\begin{equation*}
\begin{aligned}
\left| \int_M \prod_{j=0}^{m-1} f_j \circ T_{\omega}^{k_j} \d\mu_{\omega}
- \prod_{j=0}^{m-1} \int_M f_j \d\mu_{\sigma^{k_j}\omega} \right|
\leq e^{-\frac{\gamma}{2} \min_{0\leq j\leq m-2} (k_{j+1}-k_j)^{\theta}}
\prod_{j=0}^{m-1} \|f_j\|_{\B}.
\end{aligned}
\end{equation*}
\end{remark}

\section{Proofs for examples}\label{expleproofsec}

\subsection{Proof of Example \ref{renewalexple} and Remark \ref{renewalrem}}\label{renewalproofsec}

Recall that the maps $T_{\omega}:[0,1)\rightarrow [0,1)$ are given by 
\begin{equation*}
T_{\omega}(x) = \begin{cases}
2x \; \text{(mod 1)} & \textit{if } \omega_0=0\\
x & \textit{otherwise}.
\end{cases}
\end{equation*}

Consider the probabilities $p_k$ as
\begin{equation*}
p_k = \frac{c}{(k+1)^{\alpha}} \quad \text{for some } \alpha >2,
\end{equation*}
where $c>0$ is a constant such that $\sum_{k\geq 0} p_k = 1$. The claims of Example \ref{renewalexple} and Remark \ref{renewalrem} will be proven, once we show
\begin{itemize}
\item[(i)] if $\alpha\leq 3$, then $(T_{\omega})$ is not quenched stretched exponentially mixing with an integrable threshold,
\item[(ii)] if $\alpha>3$, then $(T_{\omega})$ is quenched stretched exponentially mixing of all orders with an integrable threshold,
\item[(iii)] if $\alpha>4$, then $(T_{\omega})$ is quenched exponentially mixing of all orders with an integrable threshold,
\item[(iv)] and if $\alpha\leq 4$, then $(T_{\omega})$ is not quenched exponentially mixing with an integrable threshold.
\end{itemize}
Then claims (i) and (ii) will prove Example \ref{renewalexple}, while (iii) and (iv) justify Remark \ref{renewalrem}. 

\begin{remark}\label{renewalstretchrem}
Although the system $(T_{\omega})$ is quenched exponentially mixing of all orders for all $\alpha$, as shown in Remark \ref{renewalrem}, for $\alpha\in (3,4]$ it is necessary to pass to the stretched exponential regime to obtain an integrable threshold. Moreover, the argument used in Lemma \ref{renewalstretchmixlem} suggests that similar mechanisms should apply more generally, in situations where sufficiently strong expansion occurs with adequate frequency. We do not pursue this here, leaving it instead as future work.
\end{remark}

Note that (i) follows directly from \cite[Appendix A]{DHS23}, for if $(T_{\omega})$ were quenched stretched exponentially mixing with an integrable threshold, then Lemma \ref{sigmaexlem} would show that the asymptotic variance converges. For illustrative purposes, however, we shall give a more direct argument.

\begin{lemma}[Proof of (i)]\label{renewalnonmixlem}
Let $\alpha\leq 3$, then there is no $N^*(\omega)\in L^1(\P)$ such that $(T_{\omega})$ is quenched stretched exponentially mixing with random threshold $N^*$.
\end{lemma}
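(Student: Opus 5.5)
We argue by contradiction: suppose $(T_{\omega})$ is quenched stretched exponentially mixing (order $m=2$) with some threshold $N^*\in L^1(\P)$ and constants $\gamma,\theta$. The aim is to exhibit, for a positive-measure set of $\omega$, long stretches of identity maps that force $N^*(\omega)$ to be at least the length of the stretch. Then integrability of $N^*$ would force integrability of the stretch lengths, which fails when $\alpha\leq 3$.

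\textbf{Step 1: identity stretches block mixing.} Fix a nonconstant $f\in\mathfrak{B}$ (e.g.\ a smooth or $BV$ function on $[0,1)$). Note that $\mu_{\omega}=\mathrm{Leb}$ for all $\omega$, since both the doubling map and the identity preserve Lebesgue measure. Take $\omega$ with $\omega_0=k\geq 1$, so that $T_{\omega}^n=\mathrm{id}$ for $n\leq k$; recall that in the renewal chain $\omega_0=k$ forces $\omega_j=k-j$. For $n\leq k$ the correlation
\begin{equation*}
\int f\cdot f\circ T_{\omega}^n\d\mathrm{Leb}-\Big(\int f\d\mathrm{Leb}\Big)^2=\mathrm{Var}(f)=:v>0
\end{equation*}
does not decay. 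If $N^*(\omega)\leq n\leq k$, the mixing estimate \eqref{asm:qmem} gives $v\leq e^{-\gamma n^{\theta}}\|f\|_{\mathfrak{B}}^2$. This is impossible once $n\geq n_0$, where $n_0$ is a constant depending only on $f,\gamma,\theta$. Hence, whenever $k\geq n_0$, we get $N^*(\omega)>k$ for $\P$-a.e.\ $\omega$ with $\omega_0=k$. (If the hypothesis on the gap is only checked at $\omega$ itself, this is exactly the two-point condition $k_1\geq N^*(\omega)$, so nothing more is needed.)

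\textbf{Step 2: integrate.} It follows that $\E(N^*)\geq\sum_{k\geq n_0} k\,\lambda(k)$, where $\lambda(k)=\sum_{j\geq k}p_j/\sum_j jp_j\asymp k^{1-\alpha}$ is the stationary probability of the state $k$. Therefore $\sum_k k\lambda(k)\asymp\sum_k k^{2-\alpha}=\infty$ for $\alpha\leq 3$, contradicting $N^*\in L^1(\P)$.

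\textbf{Main obstacle.} The only delicate point is ensuring that the stationary measure exists and has the stated tails. This requires $\sum_k kp_k<\infty$, which holds since $\alpha>2$. The tail asymptotic $\sum_{j\geq k}(j+1)^{-\alpha}\asymp k^{1-\alpha}$ is standard. One should also check that the a.e.\ quantifier in Definition~\ref{qmemdef} is compatible with restricting to the positive-measure cylinders $\{\omega_0=k\}$; this is immediate because each cylinder has positive $\P$-measure.
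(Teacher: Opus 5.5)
Your proof is correct and follows the same route as the paper. On $\{\omega_0 = k\}$ the fibre maps are the identity up to time $k$, so correlations do not decay and $N^*(\omega)$ must exceed $\omega_0$ for large $\omega_0$; hence $N^*$ dominates the non-integrable variable $\omega_0$. The paper phrases the final step as the tail bound $\P(N^*>n)\geq\P(\omega_0>n)\gg n^{-1}$, whereas you sum $\E(N^*)\geq\sum_{k\geq n_0}k\lambda(k)$ directly, which is the same computation.
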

\begin{proof}
If $\omega_0 > n$, then for any $f$ it holds that
\begin{equation*}
\int_0^1 f \cdot f \circ T_{\omega}^n \d\text{Leb} = \int_0^1 f^2 \d\text{Leb}.
\end{equation*}
Therefore, if $(T_{\omega})$ is quenched stretched exponentially mixing with random threshold $N^*$, then
\begin{equation*}
\P(N^* > n) \gg \P(\omega_0 > n) \gg \sum_{k>n} k p_k \gg \frac{1}{n}.
\end{equation*}
Hence $N^*\not\in L^1$.
\end{proof}

\begin{remark}
The above argument, in fact, shows that $(T_{\omega})$ does not satisfy quenched mixing estimates with any mixing rate and an integrable random threshold. We do not formalise this statement here, as we have only introduced the notion of stretched exponential mixing, and such a generalisation is not needed for the purposes of this paper.
\end{remark}

In the following lemmas, we will use the following standard fact on large deviations for heavy-tailed sums; see \cite[Theorem 8.6.3]{EKMextreme} (see also \cite{NagaevLD}).

\begin{lemma}\label{nagaevlem}
Let $(Y_j)_{j\geq 1}$ be i.i.d.\ non-negative random variables with
\begin{equation*}
\P(Y_1 \geq t) \asymp t^{-\beta}
\end{equation*}
for some $\beta>1$. Then for any $K > \E(Y_1)$, it holds that
\begin{equation*}
\P\left( \sum_{j=1}^N Y_j \geq K N \right) \asymp N \P(Y_1 \geq N).
\end{equation*}
\end{lemma}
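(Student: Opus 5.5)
The plan is to prove the two-sided estimate by splitting into a lower bound, which comes from the event that a single summand is large, and an upper bound, which comes from truncation. Write $S_N=\sum_{j=1}^N Y_j$ and $\mu=\E(Y_1)<\infty$; finiteness holds since $\beta>1$. Fix $K>\mu$ and set $\delta=(K-\mu)/2>0$.

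\emph{Lower bound.} Consider the event $\{Y_1\geq (K+1)N\}\cap\{\sum_{j=2}^N Y_j\geq 0\}$. The second condition is automatic because $Y_j\geq 0$. Hence
\begin{equation*}
\P(S_N\geq KN)\geq \P(Y_1\geq (K+1)N)\gg N^{-\beta}\asymp \P(Y_1\geq N).
\end{equation*}
This only gives the order $N^{-\beta}$, not $N^{1-\beta}$. To recover the factor $N$, I would use the events $E_i=\{Y_i\geq KN\}$ for $i\le N$. Their union has probability
\begin{equation*}
\P\Big(\bigcup_{i\le N} E_i\Big)\ge N\P(E_1)-\binom{N}{2}\P(E_1)^2 .
\end{equation*}
Since $N\P(E_1)\asymp N^{1-\beta}\to 0$, this is $\gg N\P(Y_1\ge N)$. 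On $E_i$ we have $S_N\ge Y_i\ge KN$, again using non-negativity. Throughout, I use $\P(Y_1\ge cN)\asymp \P(Y_1\ge N)$ for fixed $c>0$, which follows from the regular two-sided tail assumption.

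\emph{Upper bound.} Decompose according to whether some summand exceeds $\delta N$:
\begin{equation*}
\P(S_N\ge KN)\le N\P(Y_1\ge \delta N)+\P\big(S_N\ge KN,\ \max_j Y_j<\delta N\big).
\end{equation*}
The first term is $\asymp N\P(Y_1\ge N)$. For the second, I would pass to truncated variables $\tilde Y_j=Y_j\one_{Y_j<\delta N}$. These have mean at most $\mu$, so the event requires $\sum_j(\tilde Y_j-\E\tilde Y_j)\ge 2\delta N$. This is where a Fuk--Nagaev type inequality would be used. The cleanest route is the standard bound
\begin{equation*}
\P\Big(\sum \tilde Y_j\ge x\Big)\le e^{x/y}\Big(\frac{N\E \tilde Y_1 }{x}\Big)^{x/y}+\dots
\end{equation*}
with $y=\delta' N$. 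More directly, I would apply the exponential Chebyshev inequality with parameter $h=s\log N/N$:
\begin{equation*}
\E e^{h\tilde Y_1}\le 1+h\mu+\int_0^{\delta N} h e^{hu}\P(Y_1>u)\,du-h\mu_{\text{tail}} .
\end{equation*}
Bounding the integral via $\P(Y_1>u)\ll u^{-\beta}$ gives $\E e^{h\tilde Y_1}\le 1+h\mu+ o(h)+O(h\, N^{\delta s}N^{-\beta+1}\cdot N^{-1}\log N)$. Choosing $\delta s<\beta-1$, the product over $j\le N$ yields
\begin{equation*}
\P(\cdot)\le \exp\big(-h\cdot 2\delta N+o(hN)\big)=N^{-2\delta s+o(1)} .
\end{equation*}

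\emph{Main obstacle.} The hard step is making $2\delta s\ge \beta-1$, i.e. getting the truncated contribution below $N^{1-\beta}$, while also keeping $\delta s<\beta-1$. These two constraints are compatible only if the truncation level is reduced. Concretely, I would truncate at $\eta N$ with $\eta$ small compared to $\delta$, and then handle the cases with two or more summands in $[\eta N,\delta N]$ separately. That case costs $N^2\P(Y_1\ge\eta N)^2\asymp N^{2-2\beta}=o(N^{1-\beta})$. In the case with exactly one such summand, the remaining sum must still exceed $(K-\delta)N$, which is handled by the same Chernoff bound. With $\eta$ small enough, the exponent $2\delta s$ can be made to exceed $\beta-1$. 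Since this is a classical result (\cite[Theorem 8.6.3]{EKMextreme}, \cite{NagaevLD}), I would cite it rather than redo the bookkeeping in full.
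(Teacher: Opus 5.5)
Your proof is correct, and it takes a different route from the paper. The paper gives no argument of its own: it simply invokes the classical heavy-tailed large deviation theorem (Nagaev; Embrechts--Kl\"{u}ppelberg--Mikosch). That theorem yields the sharper asymptotic $\P\bigl(\sum_{j\le N}Y_j\ge KN\bigr)\sim N\,\P\bigl(Y_1>(K-\E(Y_1))N\bigr)$. Results of this type are usually stated for regularly varying tails. That holds in the paper's application, where $p_k=c(k+1)^{-\alpha}$, but it is stronger than the hypothesis $\P(Y_1\ge t)\asymp t^{-\beta}$ actually stated in the lemma.

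Your argument is self-contained and uses only that two-sided tail bound, at the price of giving $\asymp$ rather than $\sim$. It has two parts: the single-big-jump lower bound via Bonferroni and non-negativity, and the upper bound via truncation at $\delta N$ plus an exponential Chebyshev bound. So you do not need to fall back on the citation at the end.

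Two corrections to the upper bound. First, the ``main obstacle'' is not there. The deviation you need, $(K-\E(Y_1))N=2\delta N$, is twice the truncation level, so the constraints $\delta s<\beta-1$ and $2\delta s>\beta-1$ are compatible. For example, $\delta s=\tfrac34(\beta-1)$ gives $\P\bigl(S_N\ge KN,\ \max_j Y_j<\delta N\bigr)\ll N^{-\frac32(\beta-1)}=o(N^{1-\beta})$. The second truncation at $\eta N$ is valid but unnecessary.

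Second, your expansion of $\E e^{h\tilde Y_1}$ double-counts $h\mu$ and misstates the error term. The correct form is $\E e^{h\tilde Y_1}\le 1+h\E\tilde Y_1+\int_0^{\delta N}h(e^{hu}-1)\P(Y_1>u)\,du$. Using $\P(Y_1>u)\ll\min(1,u^{-\beta})$, the two ranges of the integral contribute as follows:
\begin{itemize}
\item the range $u\le 2\beta/h$ contributes $O\bigl(h^{\min(\beta,2)}\log(1/h)\bigr)$;
\item the range $2\beta/h\le u\le\delta N$ contributes $O\bigl(e^{h\delta N}N^{-\beta}\bigr)=O(N^{\delta s-\beta})$, because the integrand grows at rate at least $h/2$ there. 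This replaces your $O(hN^{\delta s-\beta}\log N)$.
\end{itemize}
Multiplied by $N$, these tend to $0$ exactly when $\delta s<\beta-1$, which is the condition you imposed, so your conclusion survives.
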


\begin{lemma}[Proof of (ii)]\label{renewalstretchmixlem}
Let $\alpha > 3$, then there is an $N^*(\omega)\in L^1(\P)$ such that $(T_{\omega})$ is quenched stretched exponentially mixing of all orders with random threshold $N^*$.
\end{lemma}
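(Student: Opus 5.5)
Our plan is to verify the definition of quenched stretched exponential mixing of all orders directly, exploiting the fact that the system only ever composes doublings with identities. For $0\le k<n$ set
\[
D_k^n(\omega)=\#\{k\le i<n \;|\; \omega_i=0\},
\]
so that $T^{n-k}_{\sigma^k\omega}=T_0^{D_k^n(\omega)}$. Since Lebesgue measure is invariant for every fibre map, we take $\mu_\omega=\mathrm{Leb}$. The doubling map mixes BV functions against $L^\infty$ exponentially:
\[
\Bigl|\int f\cdot g\circ T_0^D - \int f\int g\Bigr|\ll 2^{-D}\|f\|_{BV}\|g\|_{L^\infty}.
\]
Iterating this as in \eqref{linfiter}--\eqref{linfmem} gives, for $f_j\in BV$ and $0=k_0\le\dots\le k_{m-1}$,
\[
\Bigl|\int\prod_j f_j\circ T_\omega^{k_j} -\prod_j\int f_j\Bigr|\ll m\max_j 2^{-D_{k_j}^{k_{j+1}}(\omega)}\prod_j\|f_j\|_{BV}.
\]
The whole problem thus reduces to a statement about the renewal chain. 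We want a threshold $N^*\in L^1$ with $D_0^n(\omega)\ge c\,n^{\theta}$ for all $n\ge N^*(\omega)$. Applying this at $\sigma^{k_j}\omega$ with the gap $n=k_{j+1}-k_j\ge N^*(\sigma^{k_j}\omega)$ then yields \eqref{asm:qmem} with a stretched exponential rate. The constants $m$ and the implied constant are absorbed by enlarging $N^*$ by a constant factor depending on $m$, which preserves integrability.

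For the threshold we take
\[
N^*(\omega)=\sup\{n \;|\; D_0^n(\omega)<n^{\theta}\}
\]
for a suitable $\theta\in(0,1)$, and it remains to show $\sum_n\P(N^*>n)<\infty$. Now $D_0^n<n^\theta$ means that the first $n$ steps contain fewer than $n^\theta$ renewals, i.e.\ visits to $0$. Decompose the path as follows. The initial excursion has length $X_0$, distributed according to the stationary law $\lambda$, so $\P(X_0\ge t)\asymp t^{-(\alpha-2)}$. The subsequent excursion lengths $Y_j$ are i.i.d.\ with $\P(Y_1\ge t)\asymp t^{-(\alpha-1)}$, where $\alpha-1>2$. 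Then
\[
\P(D_0^n<n^\theta)\le \P(X_0\ge n/2)+\P\Bigl(\sum_{j\le n^\theta}Y_j\ge n/2\Bigr).
\]
The first term is $\ll n^{-(\alpha-2)}$. For the second, since $n/2\ge K n^{\theta}$ for large $n$, Lemma \ref{nagaevlem} (applied at scale $N=n^\theta$, but with threshold $n/2\gg N$; a one-big-jump bound $N\,\P(Y_1\ge n/4)$ plus a Fuk--Nagaev estimate suffices) gives $\ll n^{\theta}n^{-(\alpha-1)}$. Finally, $\P(N^*>n)\le\sum_{n'\ge n}\P(D_0^{n'}<n'^\theta)$, and summing once more requires $\sum_n n\,\P(D_0^n<n^\theta)<\infty$. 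This holds because $\alpha-2>1$ makes $n\cdot n^{-(\alpha-2)}$ summable, provided $\alpha>3$, and choosing $\theta<\alpha-3$ handles the second term.

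The main obstacle is this tail estimate. In particular, the initial excursion from the stationary law is exactly what forces $\alpha>3$, and one must check that the heavy-tail large deviation estimate applies at the mismatched scale $n^\theta$ versus $n$. If the cruder summation bound is lossy, I would instead control $\P(N^*>n)$ directly. This can be done by noting that $N^*>n$ requires either a single excursion of length $\gg n$ starting before time of order $n$, or abnormally few renewals. Either event has probability $\ll n\cdot n^{-(\alpha-1)}+n^{-(\alpha-2)}$, which is summable for $\alpha>3$.
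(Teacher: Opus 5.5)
Your reduction to counting renewals, the choice of threshold, and the single-time estimate $\P(D_0^n<n^\theta)\ll n^{2-\alpha}+n^{\theta+1-\alpha}$ are all fine. The one-big-jump/Fuk--Nagaev bound at the mismatched scale is legitimate, since $Y_1$ has finite variance when $\alpha>3$.

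The gap is in the final summation. With $\P(N^*>n)\le\sum_{n'\ge n}\P(D_0^{n'}<n'^{\theta})$ you need $\sum_n n\,\P(D_0^n<n^\theta)<\infty$. The initial-excursion term contributes $n\cdot n^{-(\alpha-2)}=n^{3-\alpha}$, which is summable only for $\alpha>4$, not for $\alpha>3$ as you claim. So your main line only covers $\alpha>4$. That is precisely the regime of item (iii), where even the exponential threshold is integrable. It misses $\alpha\in(3,4]$, which is the entire content of this lemma.

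Your fallback does not close this as stated. The event ``a single excursion of length $\gg n$ starting before time $O(n)$'' only controls scale $n$. But $\{N^*>n\}=\bigcup_{n'>n}\{D_0^{n'}<n'^\theta\}$ also contains, for example, a long excursion beginning near time $n'/2$ for some $n'\gg n$. If you take the union of the one-big-jump events over all $n'\ge n$ naively, you again get $\sum_{n'\ge n}n'^{2-\alpha}\asymp n^{3-\alpha}$.

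The missing observation is that you should not pay the union over scales for the initial excursion. The events $\{X_0\ge n'/2\}$ decrease in $n'$, so
\[
\{N^*>n\}\subset\{X_0\ge n/2\}\cup\bigcup_{n'>n}\Bigl\{\sum_{j\le n'^{\theta}}Y_j\ge n'/2\Bigr\},
\]
whence $\P(N^*>n)\ll n^{2-\alpha}+n^{\theta+2-\alpha}$. This is summable for $\alpha>3$ and $\theta<\alpha-3$. A dyadic blocking $n'\in[2^k n,2^{k+1}n)$, using monotonicity of $n'\mapsto D_0^{n'}$, works equally well. With this repair your argument is complete, and it even gives any exponent $\theta<\min(1,\alpha-3)$.

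The paper avoids the issue with a pathwise criterion instead of a union over times. If $Y_0+\sum_{j\le N}Y_j<2KN$ and $Y_j<j^{a}$ for all $j>N$, then every window of length $n^a$ after time $2KN$ contains a zero, so $\mathcal N_n\ge\tfrac12 n^{1-a}$ for all $n>4KN$. Thus $Y_0$ and the large deviation at scale $N$ (Lemma~\ref{nagaevlem} exactly as stated) each appear only once. All later times are handled by $\sum_{j>N}\P(Y_j\ge j^a)\asymp N^{(1-\alpha)/2}$ with $a=\frac{\alpha+1}{2(\alpha-1)}$. The price is the smaller exponent $1-a=\frac{\alpha-3}{2(\alpha-1)}$.
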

\begin{proof}
Since $T_0(x)=2x$ (mod 1) is uniformly expanding, it is well known that it is exponentially mixing in the following sense: There exist constants $C>0$ and $\gamma>0$ such that for all $f\in BV$ with $\int_0^1 f \, \d \text{Leb}=0$, all $g\in L^{\infty}$, and all $n\geq 1$, one has
\begin{equation*}
\left| \int_0^1 f \cdot g \circ T_0^n \d\text{Leb} \right| \leq C e^{-\gamma n} \|f\|_{BV} \|g\|_{L^{\infty}}.
\end{equation*}
It is therefore clear that quenched mixing depends on the number of occurrences of $0$ along a given path $\omega$. More precisely, one has
\begin{equation}\label{doubranmix}
\left| \int_0^1 f \cdot g \circ T_{\omega}^n \d\text{Leb} \right| \leq C e^{-\gamma \mathcal{N}_n(\omega)} \|f\|_{BV} \|g\|_{L^{\infty}},
\end{equation}
where $\mathcal{N}_n(\omega)$ is the number of $0$s observed until time $n$, i.e.\
\begin{equation*}
\mathcal{N}_n(\omega) = \#\{j = 0,\dots, n-1 \;|\; \omega_j=0\}.
\end{equation*}
Thus, a polynomial lower bound on $\mathcal{N}_n(\omega)$ yields a stretched exponential estimate in \eqref{doubranmix}. Indeed, let $a=\frac{\alpha+1}{2\alpha-2}<1$ and
\begin{equation*}
N^*(\omega) = \min(N\geq 1 \;|\; \mathcal{N}_n(\omega) \geq \frac{1}{2} n^{1-a} \; \forall n \geq N).
\end{equation*}
Then, from \eqref{doubranmix}, it follows that 
\begin{equation*}
\left| \int_0^1 f \cdot g \circ T_{\omega}^n \d\text{Leb} \right| \leq C e^{-\frac{\gamma}{2} n^{1-a}} \|f\|_{BV} \|g\|_{L^{\infty}},
\end{equation*}
for all $n\geq N^*(\omega)$. From \eqref{linfmem} it follows that $(T_{\omega})$ is quenched stretched exponentially mixing of all orders with random threshold $N^*$. To finish the proof, we will show that $N^*(\omega)\in L^1(\P)$ is integrable.

Let us define $Y_j(\omega)$ as the number of steps from the $j$th $0$ to the $j+1$st, i.e.\
\begin{equation*}
Y_0=\min(k\geq 0 \;|\; \omega_k=0) + 1 = \omega_0 + 1 \quad \text{and} \quad Y_{j+1}=\min(k\geq 1\;|\; \omega_{Y_j + k - 1} = 0) = \omega_{Y_j} + 1.
\end{equation*}
The random variables $Y_j$ satisfy the following properties:
\begin{itemize}
\item the sequence $(Y_j)_{j \geq 1}$ is i.i.d.\ with
\begin{equation*}
\P(Y_j= k+1)=p_k \asymp k^{-\alpha},
\end{equation*}
\item and $Y_0$ is independent of $(Y_j)_{j \geq 1}$, and distributed as
\begin{equation*}
\P(Y_0= k+1)=\lambda(k) \asymp k^{1-\alpha}.
\end{equation*}
\end{itemize}

Let $K=2\E(Y_0)>2\E(Y_1)$. Now suppose that $N$ is such that
\begin{equation*}
Y_0(\omega) + \sum_{j=1}^N Y_j(\omega) < 2KN \quad \text{and} \quad Y_j(\omega) < j^{a} \; \forall j>N,
\end{equation*}
then it holds that
\begin{equation*}
\mathcal{N}_n(\omega) \geq \frac{1}{2} n^{1-a} \quad \forall n > 4KN,
\end{equation*}
and therefore $N^*(\omega)\leq 4KN$. Indeed, let $n > 4KN$, then in any interval of size $n^{a}$ there is at least one $0$, more precisely
\begin{equation*}
0 \in \{\omega_j, \dots, \omega_{j+n^{a} - 1}\} \quad \forall 2KN \leq j \leq n.
\end{equation*} 
It follows that
\begin{equation*}
\mathcal{N}_n(\omega) \geq \frac{n - 2KN}{n^a} \geq \frac{1}{2} n^{1-a}.
\end{equation*}

Now denote $S_N=\sum_{j=1}^N Y_j$ then
\begin{align*}
\P(N^* > 4KN) & \leq \P(Y_0 + S_N \geq 2KN ) + \sum_{j=N+1}^{\infty} \P(Y_j \geq j^a)\\
&\leq \P(Y_0 \geq KN) + \P( S_N \geq KN ) + \sum_{j=N+1}^{\infty} \P(Y_j \geq j^a)\\
&\ll N^{2-\alpha} + \P( S_N \geq KN ) + N^{-\frac{1-\alpha}{2}}.
\end{align*}
By Lemma \ref{nagaevlem} we have
\begin{equation*}
\P( S_N \geq KN ) \ll N\P(Y_1 \geq N) \ll N^{2-\alpha}.
\end{equation*}
Altogether, it holds that 
\begin{equation*}
\P(N^* > 4KN) \ll N^{-\frac{1-\alpha}{2}},
\end{equation*}
and $N^*(\omega)\in L^1(\P)$.
\end{proof}

\begin{lemma}[Proof of (iii)]
Let $\alpha > 4$, then there is a $N^*(\omega)\in L^1(\P)$ such that $(T_{\omega})$ is stretched exponentially mixing of all orders with random threshold $N^*$.
\end{lemma}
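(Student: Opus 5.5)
The plan is to rerun the argument of Lemma \ref{renewalstretchmixlem} with exponent $a=0$, i.e.\ in the exponential regime. (The statement says "stretched", but by item (iii) the intended claim is exponential mixing; the proof below gives the exponential version, which is stronger.) The starting point is again \eqref{doubranmix}:
\begin{equation*}
\left| \int_0^1 f \cdot g \circ T_{\omega}^n \d\text{Leb} \right| \leq C e^{-\gamma \mathcal{N}_n(\omega)} \|f\|_{BV} \|g\|_{L^{\infty}}.
\end{equation*}
So it suffices to bound $\mathcal{N}_n(\omega)$ linearly from below for all large $n$. Let $K=2\E(Y_0)$; this is finite since $\alpha>3$. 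Define
\begin{equation*}
N^*(\omega)=\min\Big(N\geq 1 \;\Big|\; \mathcal{N}_n(\omega)\geq \tfrac{n}{4K} \;\; \forall n\geq N\Big).
\end{equation*}
For $n\geq N^*(\omega)$ the right-hand side of \eqref{doubranmix} is then at most $Ce^{-\frac{\gamma}{4K}n}\|f\|_{BV}\|g\|_{L^\infty}$. By \eqref{linfmem}, or equivalently Lemma \ref{linfmemlem} with $\theta=1$ and $C(\omega)$ replaced by $C e^{\gamma' N^*(\omega)}$, this yields quenched exponential mixing of all orders. The resulting threshold is $\ll N^*(\omega)+O(\log m)$ at each shifted base point $\sigma^{k_j}\omega$. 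So it remains to show $N^*\in L^1(\P)$.

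The next step is a renewal reformulation. With $Y_0,Y_1,\dots$ the gaps between consecutive $0$'s as in Lemma \ref{renewalstretchmixlem}, $\mathcal{N}_n\geq J$ holds as soon as $Y_0+\sum_{j=1}^{J}Y_j\leq n$. Suppose that for every $J\geq N$ we have $Y_0+S_J< 2KJ$. Then every $n\geq 4KN$ satisfies $\mathcal{N}_n\geq n/(4K)$: take $J=\lfloor n/(2K)\rfloor\geq N$. Hence
\begin{equation*}
\P(N^*>4KN)\leq \P\big(Y_0\geq KN\big)+\sum_{J\geq N}\P\big(S_J\geq KJ\big),
\end{equation*}
using $Y_0+S_J\geq 2KJ$ implies $Y_0\geq KJ\geq KN$ or $S_J\geq KJ$. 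Now apply the two tail estimates. First, $\P(Y_0\geq KN)\ll N^{2-\alpha}$. Second, by Lemma \ref{nagaevlem} (note $K>\E(Y_1)$), $\P(S_J\geq KJ)\ll J\,\P(Y_1\geq J)\ll J^{2-\alpha}$, whose sum over $J\geq N$ is $\ll N^{3-\alpha}$. Altogether
\begin{equation*}
\P(N^*>4KN)\ll N^{3-\alpha}.
\end{equation*}
This is summable in $N$ exactly when $\alpha>4$, which gives $\E(N^*)<\infty$.

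The main obstacle is the union over all $J\geq N$. Requiring a linear count of zeros uniformly for all future times costs one extra power of $N$ compared with a single large-deviation event, and this is exactly why the threshold moves from $3$ to $4$. I would check carefully that Lemma \ref{nagaevlem} is uniform enough in $J$ to be summed. If needed, I would restrict to dyadic $J=2^i$ and use monotonicity of $S_J$ (with $K$ doubled), which gives $\sum_i 2^{i(2-\alpha)}\ll N^{2-\alpha}$ and makes the bound even better. A secondary check is that the tail of $Y_0$, $\lambda(k)\asymp k^{1-\alpha}$, gives $\P(Y_0\geq KN)\ll N^{2-\alpha}$, which is summable since $\alpha>3$.
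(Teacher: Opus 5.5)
Your proof is correct and is essentially the paper's argument. The paper bounds $\P(\mathcal{N}_n<\frac{\lambda(0)}{2}n)\ll n^{2-\alpha}$ for each time $n$ via Lemma \ref{nagaevlem} and sums over $n\geq N$ to get $\P(N^*>N)\ll N^{3-\alpha}$; you run the same union bound over the renewal index $J\geq N$, which is only a cosmetic difference.

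Your dyadic refinement, however, contradicts your closing remark. It is valid by monotonicity of $S_J$, and it yields $\P(N^*>cN)\ll N^{2-\alpha}$, which gives an integrable exponential threshold already for $\alpha>3$. So the shift from $3$ to $4$ is an artifact of the crude union bound, not intrinsic. This also conflicts with the paper's item (iv), whose proof relies on $\P(N^*=n+1)\geq\P(\mathcal{N}_n<\frac{\lambda(0)}{2}n)$; that inequality in fact goes the other way.
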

\begin{proof}
Let $\mathcal{N}_n, Y_j$ and $S_N$ be defined as in the proof of Lemma \ref{renewalstretchmixlem}.

Note first that
\begin{equation*}
\E(\mathcal{N}_n(\omega))= n \P(\omega_0 = 0) = \lambda(0) n.
\end{equation*}
From \eqref{doubranmix} it follows that
\begin{equation*}
\left| \int_0^1 f \cdot g \circ T_{\omega}^n \d\text{Leb} \right| \leq C e^{-\frac{\gamma\lambda(0)}{2} n} \|f\|_{BV} \|g\|_{L^{\infty}},
\end{equation*}
for $n\geq N^*(\omega)$, where
\begin{equation}\label{renewaln*expdef}
N^*(\omega)=\min(N\geq 1 \;|\; \mathcal{N}_n(\omega) \geq \frac{\lambda(0)}{2} n \; \forall n \geq N).
\end{equation}
From \eqref{linfmem} it follows that $(T_{\omega})$ is quenched exponentially mixing of all orders with random threshold $N^*$. To finish the proof, we will show that $N^*(\omega)\in L^1(\P)$ is integrable.

It holds that 
\begin{equation*}
\mathcal{N}_n(\omega) \geq t \iff Y_0 + S_{\lceil t \rceil -1} < n.
\end{equation*}
By Lemma \ref{nagaevlem} we have 
\begin{align*}
\P(\mathcal{N}_n < \frac{\lambda(0)}{2} n) &= \P(Y_0 + S_{\lceil \frac{\lambda(0)}{2} n \rceil -1}  Y_j \geq n)\\
& \leq \P(Y_0 \geq \frac{n}{3}) + \P(S_{\lceil \frac{\lambda(0)}{2} n \rceil -1}  Y_j \geq \frac{2n}{3})\\
&\ll n^{2-\alpha} + \left( \lceil \frac{\lambda(0)}{2} n \rceil -1 \right) \P(Y_1 \geq \frac{2n}{3})\\
&\ll n^{2-\alpha}.
\end{align*}
Therefore 
\begin{equation*}
\P(N^*>n) \leq \sum_{l \geq n} \P(\mathcal{N}_l(\omega) < \frac{\lambda(0)}{2} l) \ll n^{3-\alpha},
\end{equation*}
and $N^*(\omega)\in L^1(\P)$ is integrable.
\end{proof}

\begin{lemma}[Proof of (iv)]
Let $\alpha\leq 4$, then there is no $N^*(\omega)\in L^1(\P)$ such that $(T_{\omega})$ is quenched exponentially mixing with random threshold $N^*$.
\end{lemma}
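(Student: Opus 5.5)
The approach mirrors Lemma \ref{renewalnonmixlem}, but now exploits the exponential rate. Suppose $(T_{\omega})$ is quenched exponentially mixing (order $2$ suffices) with threshold $N^*$ and constant $\gamma>0$. The goal is to exhibit, for each large $n$, an event of probability $\gg n^{3-\alpha}$ on which $N^*(\omega)>cn$ for some fixed $c>0$. Then
\begin{equation*}
\E(N^*)\gg \sum_n \P(N^*>cn) \gg \sum_n n^{3-\alpha} = \infty \quad \text{for } \alpha\leq 4.
\end{equation*}

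The natural event is that the first long excursion away from $0$ already starts early, but with $\omega_0=0$ being atypical in density. Concretely, let $E_n$ be the event that among $\omega_0,\dots,\omega_{n-1}$ there is a single block of non-zero symbols of length at least $(1-\delta)n$, where $\delta>0$ is small. Equivalently, $Y_0+\sum_{j\le J} Y_j$ hits one $Y_j\geq (1-\delta)n$ within the first $J\asymp \delta n$ renewals.

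On $E_n$ the count of doubling steps satisfies $\mathcal{N}_n(\omega)\le \delta n$. Taking $f=g$ a fixed centred smooth function, say $f(x)=\cos(2\pi x)$, the correlation $\int f\cdot f\circ T_{\omega}^n\d\mathrm{Leb}$ is that of the doubling map at time $\mathcal{N}_n(\omega)$. Since $T_0^k f=\cos(2\pi 2^k x)$, this is not small: it equals $1/2$ if $\mathcal{N}_n=0$, and one would instead choose $f$ so that $\int f\cdot f\circ T_0^k\d\mathrm{Leb}\ge c_0 e^{-\gamma_0 k}$ for all $k$ with some $\gamma_0<\gamma/(2\delta)$. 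A suitable choice is $f=\sum_k 2^{-\epsilon k}\cos(2\pi 2^k x)$ with small $\epsilon$, which is still in $C^{\kappa}$ or $BV$ for $\epsilon$ chosen appropriately relative to the regularity. Then the left side of \eqref{asm:qmem} is $\ge c_0 e^{-\gamma_0\delta n}>e^{-\gamma n}\|f\|^2$, so $N^*(\omega)\geq n$ on $E_n$. Alternatively, one could use the shifted variant $N^*(\sigma^k\omega)$ with $k\le\delta n$, exploiting that the mixing condition must hold at every base point along the orbit.

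The main obstacle is the probability estimate $\P(E_n)\gg n^{3-\alpha}$. A block of length $\ge(1-\delta)n$ containing position $0$ has stationary probability $\asymp \lambda$-tail $\asymp n^{2-\alpha}$, which only gives non-integrability for $\alpha\le 3$. To gain the extra factor $n$, I would use that this block may start at any of $\asymp \delta n$ positions $k<\delta n$. The events "the renewal at $k$ has length $\ge n$" are essentially disjoint, each with probability $\asymp n^{1-\alpha}\cdot\lambda(0)$ (the $p$-tail), which gives $\gg n\cdot n^{1-\alpha}=n^{2-\alpha}$. Only with the shift to $N^*(\sigma^k\omega)$ and the stationarity of $\P$ does the sum $\sum_k \P(N^*\circ\sigma^k>n-k)$ relate back to a tail of $N^*$.

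The cleanest route is to bound $\E N^*\ge \E\#\{$bad pairs$\}$-type quantities. Precisely, I would show $\P(N^*> m)\gg \sum_{\ell\ge m}\P(\omega_0=0,Y_1\ge \ell)\asymp m^{2-\alpha}\cdot$ (number of positions within the block where $N^*$ exceeds $m$). Every point $\sigma^k\omega$ inside a long identity block of length $L$ has $N^*\ge L-k$, contributing $\sum_k (L-k)\asymp L^2$ to the ergodic average of $N^*$. Hence $\E N^*\gg \E(Y_1^2)/\E(Y_1)=\infty$ iff $\sum_k k^2p_k=\infty$, i.e.\ $\alpha\le 3$.

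To reach $\alpha\le4$, the exponential rate must be used: inside a block the needed gap grows linearly with the \emph{whole} preceding deficit. Each point contributes $\asymp L\cdot\gamma^{-1}$ more, giving $\E N^*\gg\E Y_1^3=\infty$ iff $\alpha\le4$. Making this last counting precise, via the test function $f$ above and Kac-type ergodic averaging of $N^*$ over a renewal cycle, is the step I expect to be delicate.
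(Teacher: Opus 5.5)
The part of your argument that you actually carry out only gives non-integrability for $\alpha\le 3$. That is already contained in Lemma (i), since a threshold for exponential mixing is in particular one for stretched exponential mixing.

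Your events $E_n$ have probability $\asymp n^{2-\alpha}$, not $n^{3-\alpha}$. The $\asymp\delta n$ admissible starting positions times the $p$-tail $n^{1-\alpha}$ give $n^{2-\alpha}$ again, the same order as the $\lambda$-tail. Re-indexing via $N^*\circ\sigma^k$ and stationarity cannot create the missing factor of $n$.

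Your block count is the honest version of this estimate, and it is sharp: $\asymp L^2$ per identity block of length $L$, hence $\E(N^*)\gg\E(Y_1^2)/\E(Y_1)$. The step you flag as delicate is not just unproved but false. You claim each point of the block contributes an extra $\asymp L\gamma^{-1}$, so that $\E(N^*)\gg\E(Y_1^3)$. In fact the deficit $c'n-\mathcal{N}_n(\sigma^p\omega)$, with $c'=\gamma/\log 2$ (necessarily $c'<\lambda(0)$), is created by a single long identity block. It is recovered at linear speed once the block ends, because $\mathcal{N}_n$ then grows at rate $\lambda(0)>c'$. A base point at distance $d$ before a block of length $L$ needs a threshold of order $L$ only if $d\lesssim L$. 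A base point inside the block with $R$ identity steps left needs a threshold $\asymp R$. So each block contributes $\asymp L^2$, not $L^3$.

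A minor point: your Weierstrass-type $f$ lies in neither $BV$ nor $C^{\kappa}$. Taking $f=x-\frac12$ gives $\int_0^1 f\cdot f\circ T_0^k\d\mathrm{Leb}=\frac{1}{12}2^{-k}$, which is all the lower bound needs.

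No variant of your route can close the gap, because the statement fails for $\alpha\in(3,4]$. Let $N^*(\omega)=\min\{N\ge 1:\mathcal{N}_n(\omega)\ge\frac{\lambda(0)}{2}n\ \forall n\ge N\}$. The doubling map satisfies $|\int f\cdot g\circ T_0^k\d\mathrm{Leb}-\int f\d\mathrm{Leb}\int g\d\mathrm{Leb}|\le 2^{-k}\|f\|_{BV}\|g\|_{L^1}$. Hence the system is quenched exponentially mixing in $BV$ with $\gamma=\frac{\lambda(0)}{2}\log 2$ and threshold $N^*$.

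This threshold is integrable. First, $\{N^*>n\}\subset\{\exists k\ge\frac{\lambda(0)}{2}n-1:\ Y_0+S_k>2k/\lambda(0)=2k\,\E(Y_1)\}$. Split $k$ dyadically and use two bounds:
\begin{itemize}
\item $\P(Y_0>t)\asymp t^{2-\alpha}$;
\item the one-big-jump (Fuk--Nagaev) maximal bound $\P(\max_{k\le N}(S_k-k\E(Y_1))>\epsilon N)\ll N\,\P(Y_1>\epsilon' N)\asymp N^{2-\alpha}$, valid since $\E(Y_1^2)<\infty$ for $\alpha>3$.
\end{itemize}
Together these give $\P(N^*>n)\ll n^{2-\alpha}$, so $\E(N^*)<\infty$ for every $\alpha>3$.

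The paper's own proof does not rescue you either. It obtains the extra factor $n$ from $\P(N^*=n+1)\ge\P(\mathcal{N}_n<\frac{\lambda(0)}{2}n)$, but that event only forces $N^*>n$. With the correct inequality, Lemma~\ref{nagaevlem} gives $\P(N^*>n)\gg n^{2-\alpha}$, and the argument stops at $\alpha\le 3$, exactly where yours does. Likewise, the value $4$ in part (iii) is an artefact of the union bound $\sum_{l\ge n}\P(\mathcal{N}_l<\frac{\lambda(0)}{2}l)$. That bound counts one long gap once for each of the $\asymp L$ times $l$ it spoils.
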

\begin{proof}
If $(T_{\omega})$ is quenched exponentially mixing with random threshold $\tilde{N}^*$, then necessarily $\tilde{N}^*\asymp N^*$, where $N^*$ is defined in \eqref{renewaln*expdef}, since exponential mixing requires a linear lower bound on $\mathcal{N}_n(\omega)$. Thus, the proof will be complete, once we show that $N^*(\omega)\not\in L^1(\P)$ is not integrable if $\alpha\leq 4$.

Using Lemma \ref{nagaevlem}, we have
\begin{align*}
\P(N^*=n+1) & \geq \P(\mathcal{N}_n < \frac{\lambda(0)}{2} n) \\
&\geq \P(Y_0 + S_{\lceil \frac{\lambda(0)}{2} n \rceil -1}  Y_j \geq n)\\
&\geq \P(S_{\lceil \frac{\lambda(0)}{2} n \rceil -1}  Y_j \geq n)\\
&\gg n \P(Y_1 \geq n)\\
&\gg n^{2-\alpha}.
\end{align*}
and it follows that $N^*(\omega)\not\in L^1(\P)$ is not integrable.
\end{proof}

\subsection{Proof of Example \ref{pltexple}}\label{pltexpleproofsec}

Here we verify the conditions of Theorem \ref{pltthm} for Example \ref{pltexple}. Note that conditions (R'), (VOL), and (APER) are trivially satisfied:
\begin{itemize}
\item[(R')] The Anosov flow $\phi$ is smooth, and therefore
\begin{equation*}
\sup_{\omega\in \Omega} \|T_{\omega}\|_{C^1}
= \sup_{t\in [0,1]} \|\phi_t\|_{C^1} < \infty,
\end{equation*}
\item[(VOL)] Since $\phi$ preserves a smooth measure $\mu$, it follows that $\mu_{\omega} = \mu$ for all $\omega \in \Omega$,
\item[(APER)] Any point $x$ satisfying $T_{\omega}^n x = x$ for some $\omega \in \Omega$ and $n \geq 1$ must lie on a closed orbit of the Anosov flow $\phi$. Hence
\begin{equation*}
\nu\bigl((x,\omega) \;|\; \exists n \in \N \text{ such that } T_{\omega}^n x = x\bigr)
\leq \mu\left(\bigcup_{\gamma \text{ closed orbit of } \phi} \gamma \right) = 0.
\end{equation*}
\end{itemize}

It remains to verify that $(T_{\omega})$ is quenched exponentially mixing of all orders in $C^1$. We do this using the spectral gap obtained in \cite[Theorem 2.4]{Lcontact} and applying Lemma \ref{specgapmemlem}.

\begin{lemma}
The system $(T_{\omega})$ is quenched exponentially mixing of all orders in $C^1$.
\end{lemma}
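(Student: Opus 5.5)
We will verify (H0)--(H3) of Lemma \ref{specgapmemlem} and then apply that lemma. Since $\mu_{\omega}=\mu$ for all $\omega$ and each $T_{\omega}=\phi_{t(\omega_0)}$ (with $t(0)=1$ and $t(k)=t_k$) is a volume-preserving diffeomorphism, the transfer operator is $L_{\omega}f=f\circ\phi_{-t(\omega_0)}$. The iterates are therefore $L^n_{\omega}=\mathcal{L}_{\tau_n(\omega)}$, where $\mathcal{L}_t f=f\circ\phi_{-t}$ and $\tau_n(\omega)=\sum_{j=0}^{n-1}t(\omega_j)$ is the total flow time. For the spaces we take $\B(\omega)=\B_{\mathrm{an}}$, the $\omega$-independent anisotropic Banach space of \cite{Lcontact}, and $\B=C^1$. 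Conditions (H0), (H2) and (H3) are then standard properties of that space: $\mathcal{L}_t$ acts boundedly on it for $t\geq 0$ with $\sup_{t\in[0,1]}\|\mathcal{L}_t\|<\infty$; $C^1\hookrightarrow\B_{\mathrm{an}}$ with $\|g\|_{\B_{\mathrm{an}}}\leq\|g\|_{C^1}$ after rescaling norms; multiplication by $C^1$ functions is bounded; and $|\int fg\,\d\mu|\leq\|f\|_{\B_{\mathrm{an}}}\|g\|_{C^1}$. Any fixed constants can be absorbed by renormalising $\|\cdot\|_{\B}$. Working with a single fixed space avoids the $\omega$-dependent constructions discussed in the introduction.

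For (H1) we use \cite[Theorem 2.4]{Lcontact}: the generator has a spectral gap on $\B_{\mathrm{an}}$, which gives $\|\mathcal{L}_t f\|_{\B_{\mathrm{an}}}\leq C_0e^{-\gamma_0 t}\|f\|_{\B_{\mathrm{an}}}$ for all $t\geq0$ and all $f$ with $\int f\,\d\mu=0$. Consequently
\begin{equation*}
\|L^n_{\omega}f\|_{\B_{\mathrm{an}}}\leq C_0 e^{-\gamma_0\tau_n(\omega)}\|f\|_{\B_{\mathrm{an}}}.
\end{equation*}
It remains to bound $\tau_n(\omega)$ from below linearly in $n$. Since $t(0)=1$, we have $\tau_n(\omega)\geq\mathcal{N}_n(\omega)$, the number of zeros among $\omega_0,\dots,\omega_{n-1}$. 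Moreover $t_k>0$, and each excursion of the renewal chain from level $k$ down to $0$ contributes $t_k+t_{k-1}+\dots+t_1$, which equals $1$ once $\sum_{j\geq1} t_j=1$. This is presumably the reason for the normalisation. Counting whole excursions, $\tau_n(\omega)\geq \mathcal{N}_n(\omega)$ up to a boundary term, so every unit of time elapses at each return to $0$.

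The quantity we need is $\mathcal{N}_n(\omega)\geq \frac{\lambda(0)}{2}n$ for large $n$, which holds for a.e.\ $\omega$ by the ergodic theorem. The inequality $e^{-\gamma_0\mathcal{N}_n}\leq C(\omega)e^{-\gamma_0\lambda(0)n/2}$ then holds for all $n\geq1$ with the finite random constant
\begin{equation*}
C(\omega)=C_0\sup_n e^{\gamma_0(\lambda(0)n/2-\mathcal{N}_n(\omega))}.
\end{equation*}
This gives (H1) with $\theta=1$ and $\gamma=\gamma_0\lambda(0)/2$, and Lemma \ref{specgapmemlem} yields quenched exponential mixing of all orders in $\B=C^1$. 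No integrability of $C$ is needed here, which is consistent with the failure of the CLT for $p_k\asymp k^{-3}$ in Example \ref{pltexple}.

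The main obstacle is the transfer of \cite[Theorem 2.4]{Lcontact} to the precise hypotheses (H2)--(H3). In particular, the $C^1$ multiplier bound and the duality pairing on the anisotropic space may require $C^{2}$ or $C^{\kappa}$ test functions, and the regularity class $\B$ may have to be adjusted accordingly. If this happens, we would first try replacing $C^1$ by $C^2$ (or by $C^{\kappa}$ with $\kappa$ large), and then recover $C^1$ through the smoothing approximation used in Lemma \ref{ballmixlem}, at the cost of passing to a stretched rate.
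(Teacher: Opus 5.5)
Your proposal is correct and follows the paper's proof essentially step for step. The single anisotropic space of \cite{Lcontact} serves as every $\B(\omega)$, and $L^n_{\omega}=\mathcal{L}_{\tau_n(\omega)}$. The total flow time is bounded below linearly in $n$ for all large $n$ via the ergodic theorem, with the initial segment absorbed into $C(\omega)$; the paper uses $\tau_n\geq\frac{1}{2}\E(t_{\omega_0})\,n$, while your cruder $\tau_n\geq\mathcal{N}_n\geq\frac{\lambda(0)}{2}n$ works equally well. Lemma~\ref{specgapmemlem} is then applied with (H2)--(H3) taken as properties of that space, exactly as the paper does without checking them; your caveat about $C^1$ versus $C^2$ multipliers is a fair one.

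One aside is wrong, though nothing depends on it. An excursion from level $k$ contributes $\sum_{j=1}^{k}t_j\leq 1$, not exactly $1$. It is the summability of $(t_k)$ that makes long excursions nearly non-mixing. Your inequality $\tau_n\geq\mathcal{N}_n$ already follows from $t(0)=1$ and $t_k\geq 0$ alone.
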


\begin{proof}
Let $\mathcal{L}_t$ be the Perron--Frobenius transfer operator associated to $\phi$, defined by $\mathcal{L}_t f = f \circ \phi_{-t}$. By \cite[Theorem 2.4]{Lcontact}, there exists a Banach space $\mathcal{B}$ and constants $C>0$, $\gamma>0$ such that
\begin{equation*}
\| \mathcal{L}_t f \|_{\mathcal{B}} \leq C e^{-\gamma t} \|f\|_{\mathcal{B}} \quad \forall t>0,
\end{equation*}
whenever $f \in \mathcal{B}$ satisfies $\int_M f \d\mu = 0$.

For $n \geq 1$, define
\begin{equation*}
S_n(\omega) = \sum_{j=0}^{n-1} t_{\omega_j}, \qquad \text{where } t_0 = 1.
\end{equation*}
Then the random transfer operator satisfies $L_{\omega}^n = \mathcal{L}_{S_n(\omega)}$. Hence, for $f \in \mathcal{B}$ with $\int_M f \d\mu = 0$,
\begin{equation}\label{pltexplecond1}
\| L_{\omega}^n f \|_{\mathcal{B}}
\leq C e^{-\gamma S_n(\omega)} \|f\|_{\mathcal{B}}
\leq C(\omega) e^{-\frac{\E(t_{\omega_0}) \gamma}{2} n} \|f\|_{\mathcal{B}},
\end{equation}
where
\begin{equation*}
C(\omega) = C e^{\frac{\E(t_{\omega_0}) \gamma}{2} N^*(\omega)}
\end{equation*}
and
\begin{equation*}
N^*(\omega)
= \min\left\{ N \geq 1 \;:\; S_n(\omega) \geq \tfrac{\E(t_{\omega_0})}{2} n \ \text{for all } n \geq N \right\}.
\end{equation*}
By the strong law of large numbers, $C(\omega)$ is finite for $\P$-a.e.\ $\omega$.

Moreover, one checks that $C^1 \subset \mathcal{B}$ and that for $f \in \mathcal{B}$ and $g \in C^1$,
\begin{equation}\label{pltexplecond2}
\|g\|_{\mathcal{B}} \leq \|g\|_{C^1}
\quad \text{and} \quad 
\|f g\|_{\mathcal{B}} \leq \|f\|_{\mathcal{B}} \|g\|_{C^1},
\end{equation}
and
\begin{equation}\label{pltexplecond3}
\left| \int_M f g \d\mu \right| \leq \|f\|_{\mathcal{B}} \|g\|_{C^1}.
\end{equation}

Thus, by \eqref{pltexplecond1}, \eqref{pltexplecond2}, and \eqref{pltexplecond3}, the assumptions of Lemma \ref{specgapmemlem} are satisfied, and it follows that $(T_{\omega})$ is quenched exponentially mixing of all orders in $C^1$.
\end{proof}

To complete the proof of Example \ref{pltexple}, it remains to show that $(T_{\omega})$ does not satisfy the CLT when $p_k \asymp \frac{1}{k^3}$, but does when $p_k \asymp \frac{1}{k^{3+\epsilon}}$. Since the argument is entirely analogous to Example \ref{renewalexple}, we omit the proof and instead summarise the relevant facts.

Let
\begin{equation*}
p_k = \frac{c}{(k+1)^{\alpha}}, \quad \alpha > 2,
\end{equation*}
where $c>0$ is chosen so that $\sum_{k\geq 0} p_k = 1$. Then:
\begin{itemize}
\item If $\alpha \leq 3$, then $(T_{\omega})$ does not satisfy the quenched CLT. Analogously to \cite[Appendix A]{DHS23}, the asymptotic variance fails to converge.
\item If $\alpha \leq 3$, then $(T_{\omega})$ is not quenched stretched exponentially mixing with an integrable threshold. As in Lemma \ref{renewalnonmixlem}, if $\omega_0 > n$, then the system exhibits essentially no mixing up to time $n$.
\item If $\alpha > 3$, then $(T_{\omega})$ is quenched stretched exponentially mixing of all orders with an integrable threshold. As in Lemma \ref{renewalstretchmixlem}, one must work in the stretched exponential regime to obtain integrability. In this case, Theorem \ref{cltthm} applies and yields the quenched CLT.
\end{itemize}

\providecommand{\bysame}{\leavevmode\hbox to3em{\hrulefill}\thinspace}
\providecommand{\MR}{\relax\ifhmode\unskip\space\fi MR }
% \MRhref is called by the amsart/book/proc definition of \MR.
\providecommand{\MRhref}[2]{%
  \href{http://www.ams.org/mathscinet-getitem?mr=#1}{#2}
}
\providecommand{\href}[2]{#2}

\end{document}